\DeclareSymbolFontAlphabet{\mathbb}{AMSb}
\DeclareSymbolFontAlphabet{\mathbbl}{bbold}
\newtheorem{theorem}{Theorem}[section]
\newtheorem{prop}[theorem]{Proposition}
\newtheorem{proposition}[theorem]{Proposition}
\newtheorem{thm}[theorem]{Theorem}
\newtheorem{lemma}[theorem]{Lemma}
\theoremstyle{definition} 
\newtheorem{defn}[theorem]{Definition}
\newtheorem{definition}[theorem]{Definition}
\newtheorem{remark}[theorem]{Remark}
\newcommand{\qu}{/\kern-.7ex/}
\newcommand{\lqu}{\backslash \kern-.7ex \backslash}
\newcommand{\on}{\operatorname} 
\newcommand{\Aut}{\on{Aut}}
\newcommand{\ev}{\on{ev}}
\newcommand{\NE}{\on{NE}}
\newcommand{\bM}{\overline{\mathcal M}}
\title{Relative mirror symmetry for non-Fano varieties}
\author{Fenglong You}
\address{School of Mathmatical Sciences \\ University of Nottingham, \\University Park \\Nottingham, NG7 2RD \\United Kingdom.}
\email{fenglong.you@nottingham.ac.uk}
\thanks{}
\keywords{}
\begin{document}
\date{\today}

\begin{abstract} 
Given a smooth projective variety $X$ with a smooth anticanonical divisor $D$, we study mirror symmetry for the log Calabi--Yau pair $(X,D)$ without assuming that $D$ is nef. We consider the mirror proper Landau--Ginzburg model $(\check X,W)$ from the intrinsic mirror construction of Gross--Siebert. We examine the relationship between the regularized quantum period of $X$ and the classical period of $W$, and identify the discrepancy between them as originating from curve counts in $D$, governed by the mirror map associated with $D$. We also obtain an explicit formula for the proper potential $W$ that encodes this discrepancy. In the end, we show that the quantum period, together with the mirror map, gives exactly the same information as the proper potential.
\end{abstract}

\maketitle 

\tableofcontents

\section{Introduction}

Given a smooth projective variety $X$ with a smooth anticanonical divisor $D$, the mirror of the log Calabi--Yau pair $(X,D)$ is a Landau--Ginzburg (LG) model $(\check X, W)$, where $W: \check X\rightarrow \mathbb C$ is called the potential and is proper. In the intrinsic mirror construction of the Gross--Siebert program \cite{GS19}, the mirror $\check X$ is constructed as the proj of the degree zero part of the relative quantum cohomology of a maximally unipotent monodromy degeneration of $(X,D)$. The proper potential $W$ is also naturally defined in intrinsic mirror symmetry as the theta function $\vartheta_1$ from the degree zero part of the relative quantum cohomology of $(X,D)$. The proper potential was first computed in \cite{GRZ} for toric del Pezzo surfaces. It was generalized to varieties with smooth nef anticanonical divisors in \cite{You22}. 

The Landau--Ginzburg potential can be used to define classical periods. When $X$ is a Fano variety, the quantum period of $X$, defined in \cite{CCGGK}. Mirror symmetry for Fano varieties predicts that the classical periods coincide with the quantum periods. The relation between quantum periods and proper potentials for Fano varieties is also studied in \cite{You22} and \cite{Johnston25}.

In this article, we study mirror symmetry for a log Calabi--Yau pair $(X,D)$ such that $D$ is not necessarily nef. We study both the quantum period and the proper potential for the pair $(X,D)$.

The main examples to keep in mind are blow-ups of Fano varieties. One of the motivations is to study mirror symmetry for the Tyurin degeneration of a Calabi--Yau variety $X$. Under the degeneration, we have two log Calabi--Yau pairs $(X_1,D)$ and $(X_2,D)$. In general, $-K_{X_1}$ and $-K_{X_2}$ are not necessarily nef. These varieties $X_1$ and $X_2$ are called quasi-Fano varieties in \cite{DHT}. 

A well-known example is the Tyurin degeneration of a smooth quintic threefold $Q_5$:
\[
Q_5\leadsto Q_4\cup_{K3}\on{Bl}_C\mathbb P^3,
\] 
where $Q_4$ is a smooth quartic threefold; $\on{Bl}_C\mathbb P^3$ is a blow-up of $\mathbb P^3$ along the intersection $C$ of a quartic and a quintic hypersurface in $\mathbb P^3$; and $K3=Q_4\cap \on{Bl}_C\mathbb P^3$ is a smooth quartic $K3\subset \mathbb P^3$. Mirror symmetry for $(Q_4,K3)$ is well-understood because $Q_4$ is a Fano variety. However, mirror symmetry for $(\on{Bl}_C\mathbb P^3, K3)$ is more intriguing as $\on{Bl}_C\mathbb P^3$ is not Fano.  Such a pair often appears when one studies mirror symmetry for Tyurin degenerations.  
In order to understand mirror symmetry of Tyurin degenerations \cite{DHT}, we need to study relative mirror symmetry for log Calabi--Yau pairs $(X_i,D)$ where $-K_{X_i}$ are not nef. 


 When $X$ is Fano, the quantum period is defined as a generating function of one-point genus zero Gromov--Witten invariants of $X$. The general prediction is that the classical periods are equal to the regularized quantum periods. The mirror LG model $(\check X,W)$ depends on the choice of the anticanonical divisor. Let $(X,D)$ be a smooth log Calabi--Yau pair and $(\check X, W)$ be its mirror LG model in intrinsic mirror construction. One can define the classical period using the proper potential $W$. When $-K_X$ is nef, we have the following.

 \begin{thm}[=Theorem \ref{thm-class-quantum}]\label{intro-thm-class-quantum}
Let $X$ be a smooth projective variety and $D$ be a smooth anticanonical divisor of $X$. Suppose $D$ is nef, and the classical period defined by the proper potential $W$ coincides with the regularized quantum period of $X$.
\end{thm}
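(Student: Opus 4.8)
The plan is to compute the classical period of the proper potential $W$ by a residue (constant‑term) extraction, starting from the explicit formula for $W$ obtained in the previous section, and then to recognize the resulting power series as the regularized quantum period $\widehat{G}_{X}(t)$ of $X$. Nefness of $D$ enters twice. First, when $D$ is nef the mirror map associated with $D$ is trivial, so $W$ is given by its ``bare'' form: writing $x$ for the torus coordinate on $\check X$ dual to $[\mathrm{pt}]$ (and suppressing the remaining torus coordinates, which do not affect the bookkeeping below), it reads
\[
W \;=\; x \;+\; \sum_{\beta \neq 0}\, N^{(X,D)}_{\beta}\, q^{\beta}\, x^{\,1 - D\cdot\beta},
\]
where $N^{(X,D)}_{\beta}$ is the genus‑zero relative (punctured log) Gromov--Witten invariant of $(X,D)$, with maximal tangency along $D$ and the point constraint, used in the computation of $W$. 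Second, nefness of $D$ makes $X$ weak Fano, so that the small $I$-function of $X$ carries no mirror‑map correction either; this is what will make the two periods agree on the nose rather than only after a change of variables.

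First I would write $\pi_{W}(t) = \frac{1}{(2\pi i)^{n}}\oint (1 - tW)^{-1}\,\Omega$, with $\Omega$ the invariant volume form, equivalently $\pi_{W}(t) = \sum_{k\ge 0} t^{k}\,[\,x^{0}\,]\,W^{k}$. Expanding $W^{k}$ and collecting the $x^{0}$-term, the exponent bookkeeping forces $k = D\cdot\beta = -K_{X}\cdot\beta$ on the class $\beta$ recorded by each surviving monomial, and the coefficient of $t^{k}q^{\beta}$ reduces to
\[
\sum_{m \ge 1} \binom{k}{m} \sum_{\beta_{1}+\dots+\beta_{m}=\beta}\; \prod_{i=1}^{m} N^{(X,D)}_{\beta_{i}},
\]
the inner sum running over ordered decompositions of $\beta$ into nonzero effective classes.

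Next I would identify this with the $q^{\beta}$-coefficient of $\widehat{G}_{X}(t)$, namely $(-K_{X}\cdot\beta)!\;\langle [\mathrm{pt}]\,\psi^{-K_{X}\cdot\beta-2}\rangle^{X}_{0,1,\beta}$. This takes two steps: (i) relate the relative invariants $N^{(X,D)}_{\beta}$ to the absolute one‑point descendant invariants of $X$, by degenerating to the normal cone of $D$ and using the degeneration formula (equivalently the relative mirror theorem), with weak‑Fanoness of $X$ used to discard the mirror‑map corrections that would otherwise appear; and (ii) verify the combinatorial identity, of exponential type, by which the binomial‑weighted sum over decompositions of $\beta$ reproduces exactly the factorial $(-K_{X}\cdot\beta)!$ of the regularization — this is the familiar phenomenon that the constant term of a power of a Laurent polynomial is a multinomial, rather than the corresponding monomial, coefficient. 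Summing over $\beta$ then gives $\pi_{W}(t) = \widehat{G}_{X}(t)$.

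The main obstacle is precisely this last identification: tracking the $\psi$-classes (via topological recursion relations and the string and dilaton equations) so that the relative invariants entering $W$ recombine into the one‑point descendant invariants of the quantum period, and checking that the binomial weights produced by the residue of $W^{k}$ reproduce the regularization factorials with no leftover terms. It is here that the nef hypothesis is indispensable: without it, additional contributions — precisely those governed by the mirror map associated with $D$, arising from curve classes lying in $D$ — would enter both the formula for $W$ and the relative‑to‑absolute comparison and would spoil the match; nefness of $D$ makes these contributions drop out, which is exactly why $\pi_{W}(t)$ and $\widehat{G}_{X}(t)$ then coincide. (Well‑definedness of $\pi_{W}$, i.e.\ finiteness in each $t$-degree of the classes $\beta$ that contribute, is immediate from nefness.)
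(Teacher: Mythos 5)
Your route differs from the paper's proof, but the justification you give for an on-the-nose match contains a genuine error, and the two steps you defer are precisely the substantive content. You claim that nefness of $D$ (so $X$ weak Fano) means the $I$-function of $X$ "carries no mirror-map correction", so that no change of variables is needed. In fact the relative mirror map \eqref{mirror-map}, $\tau(y)=\sum_i p_i\log y_i+\sum_{d\geq 2}g_d(y)[1]_{-d}$, is nontrivial even when $X$ is Fano: $g_d$ is built from the one-point descendants $\langle[\on{pt}]\psi^{d-2}\rangle^X_{0,1,\beta}$, which are exactly the quantum-period coefficients and are nonzero already for $\mathbb P^2$. What nefness of $D$ trivializes is only the mirror map $\tau_D$ of $(Y,D_0)$. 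Consequently your step (i) does not yield a term-by-term relation between $N_\beta=(D\cdot\beta-1)\langle[1]_1,[\on{pt}]_{D\cdot\beta-1}\rangle^{(X,D)}_{0,2,\beta}$ and the one-point descendants; what the relative mirror theorem gives is $1+\sum_\beta N_\beta q^\beta=\exp\bigl(g(y(q))\bigr)$ with $y(q)$ the inverse of the nontrivial change of variables \eqref{iden-rel-mirror-map}. Hence your step (ii) is not a self-contained binomial/multinomial identity: it is the Lagrange-inversion-for-Laurent-series and Bell-polynomial computation of Section \ref{sec:bell-poly}, and it can only be run after the potential formula is established. The reason the theorem nonetheless holds on the nose is not the absence of a mirror map but Proposition \ref{prop-period-mirror-map}: invariants with additional negative-contact insertions $[1]_{-k}$ and a point insertion vanish, so the relative mirror map has trivial effect on the classical period. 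As written, your proposal defers exactly these inputs and offers an incorrect reason that they are unnecessary.

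Two further remarks. The paper's classical period is defined as $\sum_l[W^l]_{\vartheta_0}$, the $\vartheta_0$-component in the theta-function expansion, not the naive Laurent constant term $\sum_l[W^l]_{x^0}$ that you use; their equality is itself proved via TRR and a degeneration argument in Section \ref{sec:bell-poly}, so your starting point needs that step or the identity $[W^l]_{\vartheta_0}=\sum_{\beta:D\cdot\beta=l}\langle[1]_1,\ldots,[1]_1,[\on{pt}]_0\bar{\psi}^{l-2}\rangle^{(X,D)}_{0,l+1,\beta}$. For comparison, the paper's proof of Theorem \ref{thm-class-quantum} avoids the potential formula and inversion combinatorics altogether: it degenerates the one-point invariant $\langle[\on{pt}]\psi^{D\cdot\beta-2}\rangle^X_{0,1,\beta}$ to the normal cone of $D$, places the marking away from $D$, and uses the virtual-dimension count (nefness forces every relative insertion at the marked vertex to be $[1]_1$ with contact order $1$) plus vanishing of the $(Y,D_0)$ rational tails except for fiber classes, to obtain $\langle[\on{pt}]\psi^{d-2}\rangle^X_{0,1,\beta}=\tfrac{1}{d!}\langle[1]_1,\ldots,[1]_1,[\on{pt}]_0\bar{\psi}^{d-2}\rangle^{(X,D)}_{0,d+1,\beta}$ directly, which are precisely the coefficients of $\pi_W$.
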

This refines the result in  \cite{TY20b}*{Theorem 1.14} and \cite{TY20b}*{Remark 6.4}, where the result was up to the relative mirror map because the relative mirror theorem was used to prove it. In Proposition \ref{prop-period-mirror-map}, we show that the effect of the relative mirror map is trivial on the classical period, therefore \cite{TY20b}*{Remark 6.4} is consistent with Theorem \ref{intro-thm-class-quantum}. In Section \ref{sec:quan-period-degen}, we also use the degeneration formula to prove Theorem \ref{intro-thm-class-quantum} directly.

When $-K_X$ is not nef, we can also try to use either the relative mirror theorem or the direct degeneration analysis to compute classical periods. New phenomena appear in both methods. The discrepancy between the regularized quantum periods and the classical periods is the degeneration contributions of curves that are mapped into the divisor $D$. Fortunately, the discrepancy can be encoded in the mirror map of the relative-local model $(Y,D_0)$, where $Y=\mathbb P(\mathcal O_D\oplus N_D)$ and $D_0$ is the zero divisor of $Y$. The pair $(Y,D_0)$ comes from the degeneration of $X$ to the normal cone of $D$. 

Besides the direct degeneration analysis, the results of this article also require careful studies of the relative mirror theorem for log Calabi--Yau pairs \cite{You25}, which was also obtained from the degeneration analysis. In Section \ref{sec:rel-mirror}, we first recall the relative mirror theorem in \cite{You25}.  We focused on the Fano case in \cite{You25}. Here we examine the non-Fano case more carefully. After a thorough examination of the mirror maps, we obtained an explicit mirror formula in Theorem \ref{thm-rel-I-cy}:
\begin{align*}
&J_{(X,D)}(\tau(y),z)\\
= &  \sum_{\beta\in \NE(X)} J_{X,\beta}(\tau_D(y,z),z)y^\beta\frac{\prod_{ a\leq D\cdot\beta}(D+az)}{\prod_{ a\leq 0}(D+az)} \prod_{\text{if }D\cdot\beta>0}\frac{1}{D+D\cdot\beta z}[1]_{-D\cdot\beta},
\end{align*}
where the relative mirror map: $\tau(y)$, and the mirror map in $D$: $\tau_D(y,z)$ are computed in (\ref{mirror-map}) and (\ref{tau-D}) respectively.

We state the relationship between the regularized quantum periods and the classical periods as follows.
\begin{theorem}[=Theorem \ref{thm-classical-quantum}]
Let $X$ be a smooth projective variety and $D$ be a smooth anticanonical divisor of $X$. The classical period defined by the proper potential $W$ coincides with the regularized quantum period of $X$ after imposing the mirror map in $D$. More precisely, we have
\[
\sum_{n\geq 0}[W^n]_{\vartheta_0}=1+\sum_{d\geq 2}\sum_{\substack{\beta\in \NE(X)\\d=D\cdot\beta}} (D\cdot\beta)!\left[\sum_{l\geq 0,\beta^\prime\in \NE(X)}\frac{1}{l!}\langle [\on{pt}]\psi^{d-2},\tau_D,\ldots,\tau_D\rangle_{0,l+1,\beta^\prime}^X q^{\beta^\prime}\right]_{q^\beta} q^\beta, 
\]
where $\tau_D$ is the mirror map for $(Y,D_0)$ and $q^\beta=t^\beta x^{-D\cdot\beta}$.
\end{theorem}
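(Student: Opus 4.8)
The plan is to compute the classical period $\sum_{n\geq 0}[W^n]_{\vartheta_0}$ by unpacking the powers of the proper potential $W=\vartheta_1$ into genus-zero relative Gromov--Witten invariants of $(X,D)$, and then feeding these into the relative mirror theorem, Theorem~\ref{thm-rel-I-cy}, which converts them into absolute invariants of $X$ twisted by the mirror map $\tau_D$ of the relative-local model $(Y,D_0)$. The overall architecture parallels the proof of Theorem~\ref{intro-thm-class-quantum} in the nef case; the new feature is that, when $-K_X$ is not nef, the mirror map $\tau_D$ genuinely contributes to the period --- in contrast with the relative mirror map $\tau(y)$, whose effect is trivial by Proposition~\ref{prop-period-mirror-map}.

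\emph{Step 1.} Since $W=\vartheta_1$ and the product of theta functions in the degree-zero part of the relative quantum cohomology of $(X,D)$ is governed by genus-zero punctured log invariants of $(X,D)$, expanding the $n$-fold product $\vartheta_1^n$ and reading off the $\vartheta_0$-component writes $[W^n]_{\vartheta_0}$ as a sum over $\beta\in\NE(X)$ of relative invariants of $(X,D)$ with $n$ interior insertions dual to $\vartheta_1$ and one marked point of maximal contact order $D\cdot\beta$ along $D$. Summing over $n$ and applying the string, divisor and WDVV equations for relative invariants --- the reconstruction argument of Section~\ref{sec:quan-period-degen} --- the interior insertions telescope and one is left with
\[
\sum_{n\geq 0}[W^n]_{\vartheta_0}=1+\sum_{d\geq 2}\ \sum_{\substack{\beta\in\NE(X)\\D\cdot\beta=d}}\langle [\on{pt}]\,\psi^{d-2}\rangle^{(X,D)}_{0,1,\beta}\,q^\beta,
\]
where the right-hand invariant is the one-pointed relative invariant of $(X,D)$ with maximal tangency $d$, point constraint and descendent $\psi^{d-2}$, and $q^\beta=t^\beta x^{-D\cdot\beta}$. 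This step does not use nefness of $D$.

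\emph{Step 2.} The one-pointed maximal-tangency relative invariants of Step~1 are exactly those encoded in the contact-order-$(D\cdot\beta)$ component of the relative $J$-function $J_{(X,D)}$ paired against $[\on{pt}]$. Substituting the formula of Theorem~\ref{thm-rel-I-cy}, the hypergeometric factor $\frac{\prod_{a\leq D\cdot\beta}(D+az)}{\prod_{a\leq 0}(D+az)}\cdot\frac{1}{D+D\cdot\beta z}[1]_{-D\cdot\beta}$, together with the contact-order-$(D\cdot\beta)$ normalization, contributes --- after the point pairing and the extraction of the relevant power of $z$ --- the numerical factor $(D\cdot\beta)!$, while $J_{X,\beta}(\tau_D(y,z),z)$ contributes $\sum_{l\geq 0,\beta'}\frac{1}{l!}\langle[\on{pt}]\psi^{d-2},\tau_D,\dots,\tau_D\rangle^X_{0,l+1,\beta'}q^{\beta'}$, the outer $q^\beta$-coefficient arising from translating the relative Kähler variables into the absolute ones; the relative mirror map $\tau(y)$ drops out by Proposition~\ref{prop-period-mirror-map}. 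Thus
\[
\langle [\on{pt}]\,\psi^{d-2}\rangle^{(X,D)}_{0,1,\beta}=(D\cdot\beta)!\left[\sum_{l\geq 0,\,\beta'\in\NE(X)}\frac{1}{l!}\langle [\on{pt}]\psi^{d-2},\tau_D,\dots,\tau_D\rangle^X_{0,l+1,\beta'}q^{\beta'}\right]_{q^\beta},
\]
and substituting this into Step~1 gives the asserted identity. When $-K_X$ is nef, $\tau_D$ has no effect on the period and one recovers Theorem~\ref{intro-thm-class-quantum}.

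\emph{Main obstacle.} The crux is the reconstruction of Step~1: one must show that the sum over $n$ of the $\vartheta_0$-coefficients of $\vartheta_1^n$, over all curve classes, reorganizes exactly into one-pointed maximal-tangency invariants, with every multi-point and lower-tangency contribution cancelling, and one must handle carefully the unstable terms and the definition of the relative quantum product for a log Calabi--Yau pair. The genuinely new point --- absent in the nef case --- is that $\tau_D$ must be carried along with its full $z$-dependence: one must check, via the degeneration of $X$ to the normal cone of $D$, that the curves mapped into $D$ reassemble precisely into $\tau_D$ as defined in (\ref{tau-D}), and that its $z$-dependence is compatible with the hypergeometric modification in Theorem~\ref{thm-rel-I-cy} once the point-paired coefficient has been extracted. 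A self-contained alternative to Step~2 is to expand the relative invariants of Step~1 directly via the degeneration formula for $X\leadsto X\cup_D Y$, which isolates the $(Y,D_0)$-contributions making up $\tau_D$ and makes the role of the relative-local model most transparent.
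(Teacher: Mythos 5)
There is a genuine gap, and it is already in your Step~1. You claim that string, divisor and WDVV make the multi-point expansion of $\sum_n[W^n]_{\vartheta_0}$ ``telescope'' into one-pointed maximal-tangency invariants $\langle[\on{pt}]\psi^{d-2}\rangle^{(X,D)}_{0,1,\beta}$, but no such invariant exists for a log Calabi--Yau pair: with one marking and $c_1(T_X(-\log D))\cdot\beta=0$ the virtual dimension is $\dim_{\mathbb C}X-2$ independently of $\beta$, while your insertion (a point class on $D$ with $\psi^{d-2}$) has degree $\dim_{\mathbb C}X+d-3$, so the invariant is dimensionally inconsistent for every $d\geq 2$; you appear to have transplanted the \emph{absolute} quantum-period invariant $\langle[\on{pt}]\psi^{d-2}\rangle^{X}_{0,1,\beta}$ into the relative theory. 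The correct expansion, which the paper takes as the starting point, is
\begin{equation*}
[W^n]_{\vartheta_0}=\sum_{\beta:\,D\cdot\beta=n}\langle [1]_1,\ldots,[1]_1,[\on{pt}]_0\bar{\psi}^{\,n-2}\rangle^{(X,D)}_{0,n+1,\beta}\,t^\beta x^{-n},
\end{equation*}
with $n$ contact-order-one markings and one interior point marking (justified via the theta-function definition, TRR and a degeneration identity in Section~\ref{sec:bell-poly}); there is no reduction of these to one-point invariants, and no argument you give produces one.

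Step~2 inherits this problem and adds another: you assert that Theorem~\ref{thm-rel-I-cy} extracts your (ill-defined) invariants with the factor $(D\cdot\beta)!$, but you give no mechanism for isolating a specific coefficient of $J_{(X,D)}(\tau(y),z)$ carrying the $[1]_1$-insertions --- that requires the $S$-extended $I$-function and the identities of Section~\ref{sec:iden-rel-GW} handling the nontrivial terms $g_d(y)[1]_{-d}$ of the relative mirror map, as in the proof of Theorem~\ref{thm-proper-potential} --- and Proposition~\ref{prop-period-mirror-map} concerns exactly the multi-point invariants with an interior point marking, so it cannot be invoked for the one-point objects you introduce. The paper's actual proof of Theorem~\ref{thm-classical-quantum} does not use the relative mirror theorem at all: it applies the degeneration formula to the \emph{absolute} invariants $\langle[\on{pt}]\psi^{d-2},\tau_D,\ldots,\tau_D\rangle^{X}_{0,l+1,\beta'}$ for the degeneration of $X$ to the normal cone of $D$, uses the $I$-function of $(Y,D_0)$ to kill every non-principal bipartite graph (this is precisely where $\tau_D$ enters), and is left with $\frac{1}{(D\cdot\beta)!}\langle[1]_1,\ldots,[1]_1,[\on{pt}]_0\bar{\psi}^{\,D\cdot\beta-2}\rangle^{(X,D)}_{0,D\cdot\beta+1,\beta}$; a second proof derives the statement from Theorem~\ref{thm-proper-potential} by Lagrange inversion and Bell polynomials. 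Your closing remark about the degeneration of $X$ to $X\cup_D Y$ points in the right direction, but you propose applying it to relative invariants of $(X,D)$ rather than to the $\tau_D$-corrected absolute invariants of $X$, and it is not developed into an argument.
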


In dimensions $2$ and $3$ we have a more refined result which coincides with the result in \cite{BGL} in dimension $2$.
\begin{theorem}
Let $X$ be a smooth projective variety of complex dimension $3$ or $2$, and let $D$ be a smooth anticanonical divisor of $X$. The classical period defined by the proper potential $W$ coincides with the regularized quantum period of $X$.
\end{theorem}

We also compute the proper potential for smooth log Calabi--Yau pairs. In order to compute the proper potential, we generalize some identities of relative Gromov--Witten invariants with negative contact orders from \cite{You22} to the case where the divisor $D$ is not nef. Some of the identities are similar to those in \cite{You22}, but there are also new invariants that we need to consider because $D$ is not nef. Therefore, the proofs are not just simple generalizations of that of \cite{You22} and we refer the reader to Section \ref{sec:iden-rel-GW} for more details. The relative mirror theorem together with these identities allows us to obtain the following formula for the proper potential.

\begin{theorem}[=Theorem \ref{thm-proper-potential}]
Given a smooth log Calabi--Yau pair $(X,D)$, the proper LG potential is
\begin{align}
W:=x+\sum_{n=1}^{\infty}\sum_{\beta:D\cdot\beta=n+1}n\left\langle [1]_1,[\on{pt}]_{n}\right\rangle_{0,2,\beta}^{(X,D)}t^\beta x^{-n}=x\exp(g(y(q))),
\end{align}
where $y(q)$ is the inverse of the relative mirror map (\ref{iden-rel-mirror-map}) and 
\[
g(y)=\sum_{\substack{\beta\in \NE(X) \\ d=D\cdot\beta\geq 2}}\left[\sum_{l\geq 0,\beta^\prime\in \NE(X)}\frac{1}{l!}\langle [\on{pt}]\psi^{d-2},\tau_D,\ldots,\tau_D\rangle_{0,1+l,\beta^\prime}^X y^{\beta^\prime}\right]_{y^\beta}y^\beta (d-1)!.
\]
\end{theorem}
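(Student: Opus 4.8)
The plan is to combine the relative mirror theorem (Theorem \ref{thm-rel-I-cy}) with the intrinsic description of the proper potential $W$ as the theta function $\vartheta_1$, together with the identities for relative Gromov--Witten invariants with negative contact orders developed in Section \ref{sec:iden-rel-GW}. First I would recall the intrinsic mirror symmetry characterization: the proper potential $W=\vartheta_1$ expands as $x$ plus a sum of relative invariants $\langle [1]_1,[\mathrm{pt}]_n\rangle^{(X,D)}_{0,2,\beta}$ weighted by $t^\beta x^{-n}$ over curve classes with $D\cdot\beta=n+1$, which gives the left-hand equality of the displayed formula essentially by definition (using the structure constants of the degree-zero part of relative quantum cohomology). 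So the real content is the evaluation of those two-pointed relative invariants with one negative contact point.

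The key step is to express $\langle [1]_1,[\mathrm{pt}]_n\rangle^{(X,D)}_{0,2,\beta}$ in terms of absolute Gromov--Witten invariants of $X$ and the mirror map $\tau_D$ of the relative-local model $(Y,D_0)$. I would proceed by: (i) applying the relative mirror theorem in the form of Theorem \ref{thm-rel-I-cy} to extract the coefficient of the appropriate power of $z$ and of $y^\beta$ in $J_{(X,D)}$, isolating the term attached to $[1]_{-D\cdot\beta}$, which after taking the $z^0$ (or suitable $z$-power) part produces the bracket $\langle [1]_1,[\mathrm{pt}]_n\rangle$; (ii) using the relative GW identities from Section \ref{sec:iden-rel-GW} to convert the negative-contact invariant into an absolute invariant $\langle [\mathrm{pt}]\psi^{d-2},\tau_D,\ldots,\tau_D\rangle^X_{0,1+l,\beta'}$ summed against powers of the mirror map, where the $\psi^{d-2}$ and the combinatorial factor $(d-1)!$ arise from the hypergeometric modification factor $\prod_{a\le D\cdot\beta}(D+az)/\prod_{a\le 0}(D+az)$ and the factor $\frac{1}{D+D\cdot\beta z}[1]_{-D\cdot\beta}$ in Theorem \ref{thm-rel-I-cy} after extracting residues in $z$. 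Keeping careful track of the mirror map in $D$ versus the relative mirror map, and of the change of variables $q^\beta=t^\beta x^{-D\cdot\beta}$, then yields $g(y)$ as stated, and the substitution $y=y(q)$ (the inverse of the relative mirror map from (\ref{iden-rel-mirror-map})) together with the homogeneity of the potential under scaling $x$ gives $W=x\exp(g(y(q)))$.

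The main obstacle I expect is the bookkeeping around the mirror maps and the extraction of the correct $z$-coefficients: the relative mirror theorem is stated on the level of $J$-functions in the Novikov variable $y$ with a mirror map $\tau(y)$, and one must verify that, when restricted to the components relevant to the potential (the $[1]_1$ and $[\mathrm{pt}]_n$ insertions), the effect of the relative mirror map $\tau(y)$ is precisely the inversion appearing in $W=x\exp(g(y(q)))$, while the remaining ambient mirror map is exactly $\tau_D$. This is where the non-nef hypothesis really bites: the factor $\prod_{\text{if }D\cdot\beta>0}\frac{1}{D+D\cdot\beta z}[1]_{-D\cdot\beta}$ signals the genuinely new negative-contact invariants, and proving the requisite identities for these (done in Section \ref{sec:iden-rel-GW}) is what allows one to replace $J_{X,\beta}(\tau_D(y,z),z)$ evaluations by the explicit $\psi$-class absolute invariants. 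Once those identities are in place, the remaining work is a residue computation: expand the hypergeometric factor, note that $\prod_{a\le D\cdot\beta}(D+az)/\prod_{a\le 0}(D+az)=\prod_{a=1}^{D\cdot\beta}(D+az)$ contributes a top term $(D\cdot\beta)! z^{D\cdot\beta}$ modulo lower order in $z$, and match powers of $z$ against the $\psi^{d-2}$ insertion to produce the factorial $(d-1)!$ (equivalently $(D\cdot\beta)!$ divided by the degree, as in the Theorem \ref{thm-classical-quantum} formulation), thereby identifying the coefficient with $g(y)$. I would close by remarking that the formula is consistent with Theorem \ref{thm-proper-potential}'s companion statement relating $\sum_n [W^n]_{\vartheta_0}$ to the quantum period, since $[x^n \exp(ng)]_{\vartheta_0}$ picks out exactly the $\vartheta_0$-component, recovering the regularized quantum period after imposing the mirror map in $D$.
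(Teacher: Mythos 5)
Your overall strategy (relative mirror theorem plus the negative-contact identities of Section \ref{sec:iden-rel-GW}) is the same as the paper's, and you are right that the first equality is just the definition of $\vartheta_1$; but as written your plan has a concrete gap at the central step. You invoke the \emph{non-extended} mirror theorem, Theorem \ref{thm-rel-I-cy}. The non-extended $J$-function $J_{(X,D)}(\tau(y),z)$ carries at its non-distinguished markings only the insertions appearing in the relative mirror map $\tau(y)$, namely divisor classes and the negative-contact classes $g_d(y)[1]_{-d}$; nowhere does an insertion $[1]_1$ occur, so no coefficient extraction from this object can produce the two-point invariants $\langle [1]_1,[\on{pt}]_n\rangle_{0,2,\beta}^{(X,D)}$ that make up $W$. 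The paper instead uses the $S$-extended $I$-function with $S=\{1\}$ (Definition \ref{def-relative-I-function-extended}) and Theorem \ref{thm-rel-I-cy-extended}: the extended mirror map (\ref{mirror-map-extended}) contains the term $x_1[1]_1$, and it is precisely the $x_1z^{-1}$-coefficient of the identity $J=I$, in the directions $[1]_n$ with $n\geq 1$, that isolates invariants with exactly one $[1]_1$ insertion. Without this extension (or an equivalent device) your step (i) does not get off the ground.

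Two further points where your outline diverges from what is actually needed. First, the identities of Section \ref{sec:iden-rel-GW} are not used to ``replace $J_{X,\beta}(\tau_D(y,z),z)$ by absolute invariants'' (that side is already a generating function of absolute invariants of $X$ with $\tau_D$ insertions); they are used on the $J_{(X,D)}$ side, to evaluate invariants whose extra insertions $g_d(y)[1]_{-d}$ come from the relative mirror map. Concretely, Proposition \ref{prop-several-neg-2} gives the factor $(D\cdot\beta-1)^l$ and Proposition \ref{prop-degree-zero} handles the degree-zero terms, and summing over $l$ with $1/l!$ resums these into $\exp\bigl(g(y)(D\cdot\beta-1)\bigr)$ and $-\exp(-g(y))+1$. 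Second, your closing appeal to ``homogeneity under scaling $x$'' does not replace the final manipulation in the paper: one must substitute the relative mirror map in the form (\ref{iden-rel-mirror-map}), $\sum_i p_i\log q_i=\sum_i p_i\log y_i+g(y)D$, and then apply the differential operator $\Delta_D=\sum_i m_i y_i\partial_{y_i}-1$ to the resulting identity between the $J$- and $I$-coefficients (whose $I$-side carries the factor $d!/(d-1)$) in order to arrive at $1+\sum_n n\langle [1]_1,[\on{pt}]_n\rangle_{0,2,\beta}^{(X,D)}q^\beta=\exp(g(y(q)))$. Supplying the extended mirror theorem, the correct use of Propositions \ref{prop-several-neg-2} and \ref{prop-degree-zero}, and this last change-of-variables/differential-operator step would close the argument.
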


\begin{remark}
A notable difference of our main results from the case where $D$ is nef is that we also need to impose the mirror map in $D$. This corresponds to contributions of curves that are deformed into $D$. Therefore, the extra degeneration contributions can be considered as providing an enumerative meaning of the inverse of the mirror map in $D$ in the sense that these contributions vanish when applying the mirror map. These extra terms involve relative invariants of $(Y,D_0)$ and we may think about them as certain disk countings in $N_D=Y\setminus D_0$. There have been some results about the enumerative meaning of the mirror maps. See, for example, \cite{CLT}, \cite{CLLT}, \cite{CCLT} and \cite{You20}. But these are with semi-positive assumptions. The mirror maps and the enumerative meanings (these extra degeneration contributions) are much more complicated here because we do not have such assumptions.
\end{remark}

Furthermore, in Section \ref{sec:bell-poly}, we use the Lagrange inversion theorem and identities of Bell polynomials to show that quantum periods together with the mirror map in $D$ give equivalent information as the proper LG potentials. This is different from the Fano case studied in \cite{GRZ}, \cite{You22} and \cite{Johnston25}, where there are no extra mirror maps involved. In the non-Fano case, there is a discrepancy between quantum periods and proper potentials. We show that the discrepancy comes from the contributions of curves that are mapped into $D$ which is controlled by the mirror map in $D$.

\section*{Acknowledgement}
The author would like to thank Per Berglund, Tim Gräfnitz, Michael Lathwood and Siu-Cheong Lau for helpful discussions on related topics.

\section{Preliminary in relative Gromov--Witten theory}

We give a brief review of the definition of genus zero relative Gromov--Witten theory with negative contact orders from \cite{FWY}.

Let $X$ be a smooth projective variety and $D\subset X$ be a smooth divisor. We consider a topological type: 
\[
\Gamma=(0,m,\beta,\vec k),
\]
where $0$ means genus zero, $m$ is the number of markings, $\beta\in \NE(X)$ is the curve class and 
the contact orders of the markings are 
\[
\vec k=(k_1, \ldots, k_m)\in \mathbb Z^{m}
\]
with
\[
\sum_{i=1}^m k_i=D\cdot\beta.
\]
We also set 
\[
m_-:=\{i\in \{1,\ldots,m\}: k_i<0\}; 
\]
\[
m_+:=\{i\in \{1,\ldots,m\}: k_i>0\};
\]
\[
m_\pm=m_-+m_+.
\]

An admissible bipartite graph of topological type $\Gamma$ was defined in \cite{FWY}*{Definition 4.8}. Relative Gromov--Witten invariants in \cite{FWY} are defined as a sum of the contributions from the set $\mathcal B_\Gamma$ of connected admissible bipartite graphs of topological type $\Gamma$.
There is also a relative Gromov--Witten cycle $\mathfrak c_\Gamma(X/D)$ of the pair $(X,D)$ of topological type $\Gamma$ defined in \cite{FWY}*{Definition 5.3}. We refer the precise definition of $\mathfrak c_\Gamma(X/D)$ to \cite{FWY} and just state the following result.
\begin{proposition}[=\cite{FWY}, Proposition 3.4]
\[\mathfrak c_\Gamma(X/D) \in A_{d}(\overline{M}_{0,m}(X,\beta)\times_{X^{m_{\pm}}} D^{m_\pm}),\] where 
\[d=\mathrm{dim}_{\mathbb C}X-3+\int_{\beta} c_1(T_X(-\mathrm{log} D)) + m-m_-. \]
\end{proposition}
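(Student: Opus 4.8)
The claim is a virtual dimension count, so the plan is to unwind the definition of $\mathfrak c_\Gamma(X/D)$ from \cite{FWY}*{Definition 5.3} and track Chow dimensions through it. By that definition, $\mathfrak c_\Gamma(X/D)$ is a finite $\mathbb Q$-linear combination, indexed by the connected admissible bipartite graphs $\Gamma'\in\mathcal B_\Gamma$, of proper pushforwards $(\iota_{\Gamma'})_{*}\big[\overline M_{\Gamma'}\big]^{\mathrm{vir}}$, where $\overline M_{\Gamma'}$ is a fibre product, over one copy of $D$ for each edge of $\Gamma'$, of a moduli space of genus-zero relative stable maps to $(X,D)$ --- one factor for each ``$X$-side'' vertex, carrying the interior and positive-contact markings together with the incident half-edges as additional relative markings --- and of rubber moduli spaces of genus-zero stable maps to $P:=\mathbb P(\mathcal O_D\oplus N_D)$ relative to its two sections $D_0\sqcup D_\infty$ --- one factor for each ``rubber'' vertex, carrying the negative-contact markings and the incident half-edges as relative markings along the appropriate sections, with contact orders prescribed by $\Gamma'$. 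The morphism $\iota_{\Gamma'}$ forgets the relative/rubber structure, stabilises the underlying map to a point of $\overline M_{0,m}(X,\beta)$, and records in $D^{m_\pm}$ the images of the $m_\pm$ special points; being a morphism of proper Deligne--Mumford stacks it preserves Chow dimension, so it suffices to show $\dim\big[\overline M_{\Gamma'}\big]^{\mathrm{vir}}=d$ for every $\Gamma'\in\mathcal B_\Gamma$.

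To this end I would add up the virtual dimensions of the factors. For a relative factor with curve class $\beta_0$, $n_0$ interior markings and $r_0$ relative markings, the standard formula gives $\on{vdim}=\dim_{\mathbb C}X-3+\int_{\beta_0}c_1(T_X(-\log D))+n_0+r_0$. For a rubber factor with curve class $\beta'$, put $\bar\beta':=\pi_{*}\beta'\in H_2(D)$ for $\pi\colon P\to D$; since the relative log tangent bundle of a $\mathbb P^1$ along $\{0,\infty\}$ is trivial, $c_1\big(T_P(-\log(D_0+D_\infty))\big)=\pi^{*}c_1(T_D)$, whence $\int_{\beta'}c_1\big(T_P(-\log(D_0+D_\infty))\big)=\int_{\bar\beta'}c_1(T_D)$, and, incorporating the $(-1)$ shift from rigidifying the fibrewise $\mathbb C^{\times}$-action on the rubber target,
\[
\on{vdim}^{\sim}=\dim_{\mathbb C}D-3+\int_{\bar\beta'}c_1(T_D)+\bigl(\text{number of markings and half-edges on the vertex}\bigr).
\]
Finally each fibre product over $D$ lowers the dimension by $\dim_{\mathbb C}D$, so $\dim\big[\overline M_{\Gamma'}\big]^{\mathrm{vir}}$ is the sum of the $\on{vdim}$'s of all factors minus $\dim_{\mathbb C}D$ times the number of edges of $\Gamma'$.

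It remains to collapse this expression, which is where the combinatorics of \emph{admissibility} (\cite{FWY}*{Definition 4.8}) enters. Since $\Gamma'$ is connected and of genus zero it is a tree, so the number of edges is one less than the number of vertices; each edge is a relative marking on each of the two factors it joins; the positive-contact markings lie on the $X$-side while the $m_-$ negative-contact markings are distributed among the rubber vertices as prescribed by $\Gamma'$; and the curve classes satisfy $\beta=\sum(\text{$X$-side classes})+\sum_{w}\iota_{*}(\bar\beta_{w})$ in $H_2(X)$, the fibre classes of $P$ contributing nothing. Adjunction $c_1(T_X(-\log D))|_{D}=c_1(T_D)$ then assembles the $c_1$-integrals into $\int_\beta c_1(T_X(-\log D))$, and, using $\dim_{\mathbb C}D=\dim_{\mathbb C}X-1$, all occurrences of $\dim_{\mathbb C}D$, the rigidification shifts and the half-edge terms cancel, so the total collapses to $\dim_{\mathbb C}X-3+\int_\beta c_1(T_X(-\log D))+m-m_-=d$, independently of $\Gamma'$.

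The hard part is exactly this last cancellation: one must use the definition of an admissible bipartite graph carefully enough to control how the $m_-$ negative markings and the rubber levels enter, and to verify that the count is genuinely $\Gamma'$-independent --- in particular that nothing survives depending on the number of rubber vertices, on the splitting of the curve class, or on internal expansions of the rubber targets. Organising the verification as an induction on the number of edges of $\Gamma'$ should make the telescoping transparent. The two standard ingredients invoked above --- properness of $\iota_{\Gamma'}$ (the relevant moduli of relative and rubber stable maps are proper Deligne--Mumford stacks, and the stabilisation and forgetful morphisms are proper) and the virtual dimension of the fibre-product obstruction theory --- are not expected to cause trouble.
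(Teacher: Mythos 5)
There is nothing to compare on the paper's side: the paper does not prove this proposition at all, it merely quotes it from \cite{FWY} (``we refer the precise definition of $\mathfrak c_\Gamma(X/D)$ to \cite{FWY} and just state the following result''). So your proposal has to stand on its own, and as written it has a genuine gap: it misstates the definition of $\mathfrak c_\Gamma(X/D)$. You treat $\mathfrak c_\Gamma(X/D)$ as a $\mathbb Q$-linear combination of plain pushforwards of the virtual classes of the fibre products of relative and rubber moduli spaces, and then claim the summed virtual dimensions collapse to $d$ ``independently of $\Gamma'$''. That independence is false for the plain virtual classes. Repeating your own count for a graph containing a rubber vertex $v$ that carries $a_v$ negative-contact markings, the relative factors contribute $\dim_{\mathbb C}X-3+\int c_1(T_X(-\log D))+(\text{markings})$ each, the rubber factor contributes $\dim_{\mathbb C}D-3+\int c_1(T_D)+(\text{markings and half-edges})$, and after subtracting $\dim_{\mathbb C}D$ per edge and using adjunction and the tree condition, the total is $\dim_{\mathbb C}X-3+\int_\beta c_1(T_X(-\log D))+m-m_-+\sum_v(a_v-1)$, the sum running over the rubber vertices. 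Since admissibility only forces $a_v\geq 1$, the excess $\sum_v(a_v-1)$ ranges from $0$ (each rubber vertex has exactly one negative marking) up to $m_--1$ (all negative markings on one rubber vertex), so the proposed telescoping cannot be graph-independent and cannot equal $d$ in general.

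What is missing is precisely the extra ingredient in \cite{FWY}*{Definition 5.3}: each graph contribution is capped with a tautological class on the rubber factors (built from boundary and $\bar\psi$-classes, with edge multiplicities $d_e$) whose complex degree on a rubber vertex with $a_v$ negative markings is $a_v-1$; this is exactly what absorbs the excess above and makes every graph contribute in the single degree $d$. The present paper itself relies on this feature of the definition --- see the proof of Proposition \ref{prop-period-mirror-map} (``when there are $a$ negative contact orders distributed to this rubber moduli, the class $\mathfrak c_\Gamma$ is of degree $a-1$'') and the proof of Lemma \ref{lemma-several-neg-0-} (the factors $d_e\bar\psi_e$ in $\mathfrak c_\Gamma$). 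Your dimension bookkeeping for the individual factors (relative vdim, the rubber $-1$ shift, $c_1(T_P(-\log(D_0+D_\infty)))=\pi^*c_1(T_D)$, one $\dim_{\mathbb C}D$ per edge, properness of the pushforward) is fine, and it does recover $d$ in the special case where every rubber vertex carries exactly one negative marking; but without incorporating the capping classes of \cite{FWY} the argument does not prove the stated proposition.
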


The ring of insertions (state space) of relative Gromov--Witten theory is defined as 
\[
\mathfrak H:=\bigoplus_{i\in \mathbb Z} \mathfrak H_i,
\]
where
\[
\mathfrak H_0:= H^*(X) \quad \text{and } \mathfrak H_i:=H^*(D) \text{ for } i\in \mathbb Z\setminus \{0\}.
\]
Let
\begin{align*}
   \alpha_i\in \mathfrak H_0=H^*(X), \text{ if } k_i=0; 
\quad 
     \alpha_i\in \mathfrak H_{k_i}=H^*(D), \text{ if }k_i\neq 0;
\quad
a_i\in \mathbb Z_{\geq 0}.
\end{align*}

\begin{defn}[\cite{FWY}, Definition 5.7]\label{rel-inv-neg}
The genus zero relative Gromov--Witten invariant of topological type $\Gamma$ is
\begin{align}\label{iden-rel-inv-neg}
\langle \prod_{i=1}^m \tau_{a_i}(\alpha_i)  \rangle_{\Gamma}^{(X,D)} =  \displaystyle\int_{\mathfrak c_\Gamma(X/D)} \prod\limits_{i=1}^{m} \bar{\psi}_i^{a_i}\ev_{i}^*\alpha_i.
\end{align}
\end{defn}
The class $\mathfrak c_\Gamma(X/D)$ is quite complicated when there are several markings with negative contact orders. When there is only one marking with negative contact order, it is quite simple. We may assume that $\alpha_1$ is the insertion that corresponds to the unique negative contact marking. Then the relative invariant with one negative contact order can be written as
\begin{align*}
    &\langle \prod_{i=1}^m \tau_{a_i}(\alpha_i)  \rangle_{\Gamma}^{(X,D)} \\
    =& \sum_{\mathfrak G\in \mathcal B_\Gamma} \frac{\prod_{e\in E}d_e}{|\Aut(E)|}\sum \langle  \prod_{i\in S_{1}}\tau_{a_i}(\alpha_i),\tau_{a_1}\alpha_1,\eta\rangle^\sim\langle \check{\eta}, \prod_{i\in S_{2}}\tau_{a_i}(\alpha_i)  \rangle^{\bullet, (X,D)},
\end{align*}
where 
\begin{itemize}
\item $E$ is the set of edges; 
\item $d_e$'s are degrees of the edges; 
\item $\Aut(E)$ is the permutation group of the set $\{d_1,\ldots, d_{|E|}\}$; 
\item a cohomology weighted partition $\eta$ consists of an unordered set of pairs,
\[
\left\{  (c_1,\eta_1),\ldots, (c_l,\eta_l)\right\},
\]
where $\sum_i c_i$ is an unordered partition of $D\cdot\beta$ and $\eta_i\in H^*(D)$. The dual cohomology weighted partition $\eta^\vee$ consists of
\[
\left\{  (c_1,\eta_1^\vee),\ldots, (c_l,\eta_l^\vee)\right\},
\]
where $\eta_i^\vee$ is Poincar\'e dual of $\eta_i$ in $H^*(D)$.
\item the invariant $\langle \cdots\rangle^\sim$ is the rubber invariant of $D$;
\item the first marking becomes a marking with positive contact order $|k_1|$ with the divisor $D_0$ ( where $X$ and $Y=\mathbb P(N_D\oplus \mathcal O_D)$ glue along this divisor under the degeneration to the normal cone). Here, there is only one such rubber for each graph; $S_1\sqcup S_2=\{2,3,\ldots,m\}$.
\item The second sum is over all splittings of $\beta$, all splittings $S_1\sqcup S_2=\{2, 3,\ldots, m\}$, and all cohomology weighted partitions $\eta$.
\end{itemize} 

To keep track of the contact orders, we usually replace $\alpha_i$ with $[\alpha_i]_{k_i}$ on the LHS of (\ref{iden-rel-inv-neg}).

It was proved in \cite{FWY} that the genus zero relative Gromov--Witten theory with negative contact orders satisfies several nice structural properties such as the WDVV equations, topological recursion relation (TRR), relative quantum cohomology ring, Givental formalism, etc.

\section{Relative mirror theorem}\label{sec:rel-mirror}

We recall the relative mirror theorem for a smooth pair $(X,D)$ in \cite{You25}, where we do not assume that $D$ is an anticanonical divisor of $X$.  We have the following:

\begin{theorem}[=\cite{You25}*{Corollary 3.6}]\label{thm-rel-I}
The $I$-function $I_{(X,D),\on{amb}}(y,\tau,z)$ for the smooth pair $(X,D)\subset (P,X_{\infty})$:
\begin{equation*}
\begin{split}
I_{(X,D),\on{amb}}(y,\tau,z)=I_++I_-,
\end{split}
\end{equation*}
where
\[
I_+:=\sum_{\beta\in \NE(X), D\cdot\beta>0} J_{X,\beta}(\tau,z)y^\beta\frac{\prod_{ a\leq D\cdot\beta}(D+az)}{\prod_{ a\leq 0}(D+az)} \frac{1}{D+D\cdot\beta z}[1]_{-D\cdot\beta}\cup h
\]
and 
\[
I_-:=\sum_{\beta\in \NE(X), D\cdot\beta\leq 0} J_{X,\beta}(\tau,z)y^\beta\frac{\prod_{ a\leq D\cdot\beta}(D+az)}{\prod_{ a\leq 0}(D+az)} [1]_{-D\cdot\beta}\cup h,
\]
after applying the mirror map in $D$, lies in  $\iota_*\mathcal L_{(X,D)}$, where $\mathcal L_{(X,D)}$ is Givental's Lagrangian cone of $(X,D)$.
\end{theorem}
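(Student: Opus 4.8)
The plan is to obtain the relative mirror theorem as the $r\to\infty$ limit of an orbifold mirror theorem, following the root-stack strategy of \cite{FWY}. For $r\gg 0$ let $\mathcal{X}_{D,r}$ be the $r$-th root stack of $X$ along $D$. By \cite{FWY}, the genus zero relative Gromov--Witten theory of $(X,D)$ with negative contact orders, in its Givental packaging, is recovered from the genus zero orbifold theory of $\mathcal{X}_{D,r}$ as $r\to\infty$; in particular the ambient Lagrangian cone of $\mathcal{X}_{D,r}$ limits to $\iota_*\mathcal{L}_{(X,D)}$, with the twisted sectors limiting to the positive- and negative-contact copies of $H^*(D)$ inside $\mathfrak{H}$. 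It therefore suffices to (1) prove a mirror theorem for $\mathcal{X}_{D,r}$ whose $I$-function limits to $I_++I_-$, and (2) identify the limit of the resulting orbifold mirror map with the relative mirror map $\tau(y)$ together with the mirror map in $D$, $\tau_D(y,z)$.

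For step (1) I would exploit the ambient completion in the statement: with $P=\mathbb{P}(\mathcal{O}_X\oplus\mathcal{O}_X(D))$ and $X_\infty$ the section of $P\to X$ restricting to $D$, the stack $\mathcal{X}_{D,r}$ is cut out in the root stack of $P$ along $X_\infty$ by a section pulled back from $P$, so its $I$-function is obtained from $J_X$ by combining the projective-bundle mirror theorem, the root-stack (age) modification along $X_\infty$, and quantum Lefschetz; these together produce the hypergeometric factor $\prod_{a\leq D\cdot\beta}(D+az)/\prod_{a\leq 0}(D+az)$, the $\mathcal{O}_P(1)$-direction supplying the class $h$ and, over a curve of class $\beta$ with $D\cdot\beta>0$, the extra denominator $D+D\cdot\beta z$. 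Letting $r\to\infty$, the finite products over the twisted range stabilize to the stated products (understood in the standard regularized sense), the age sectors converge to $[1]_{-D\cdot\beta}$, and the $I$-function converges to $I_++I_-$. Since the orbifold mirror theorem holds on the Lagrangian cone for every $r$, and the cone, the $I$-function and the mirror map all vary with $r$ compatibly with the completion, the limit $I_++I_-$ lies on $\iota_*\mathcal{L}_{(X,D)}$; reading off the terms of non-positive degree in $z$ beyond $1+\tau/z$ and absorbing them into the change of variables produces $\tau$ and $\tau_D$, settling step (2). The same closed form can be cross-checked against the degeneration formula for the deformation of $X$ to the normal cone of $D$, in which the $\mathbb{P}^1$-bundle $Y=\mathbb{P}(\mathcal{O}_D\oplus N_D)$ contributes exactly the same hypergeometric factors.

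The main obstacle is twofold, and it is precisely where the argument must go beyond the nef case of \cite{You22} and \cite{TY20b}. First, the $r\to\infty$ limit must be made rigorous at the level of Lagrangian cones and mirror maps, and one must disentangle which part of the limiting orbifold mirror map is the genuine relative mirror map $\tau(y)$ and which is the new piece $\tau_D(y,z)$; the latter is supported on the twisted sectors and arises as the limit of the age contributions. Second, in the non-nef case there are classes $\beta\in\NE(X)$ with $D\cdot\beta\leq 0$: these produce the summand $I_-$, which (beyond $\beta=0$) is absent when $D$ is nef, and for them the twisted-sector/age bookkeeping, the regularization of the products $\prod_{a\leq D\cdot\beta}(D+az)$, and the convergence of the resulting series in the completed state space $\mathfrak{H}$ all require separate care; moreover $\tau_D(y,z)$ is now genuinely nontrivial, so one must verify that it is a legitimate triangular change of variables before the division by the mirror map in step (2) is justified. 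Once these points are in place, matching coefficients with the explicit root-stack computation of step (1) gives $I_++I_-$ exactly as stated.
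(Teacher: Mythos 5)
Note first that this paper does not prove Theorem \ref{thm-rel-I} at all: it is imported verbatim from \cite{You25}*{Corollary 3.6}, and Section \ref{sec:rel-mirror} only explains its ingredients (the two mirror maps) before specializing to the log Calabi--Yau case. The only indication of method given here is that \cite{You25} obtains the theorem ``from the degeneration analysis'', i.e.\ from the degeneration of $X$ to the normal cone of $D$, with the local model $(Y,D_0)$, $Y=\mathbb{P}(\mathcal{O}_D\oplus N_{D/X})$, supplying the extra mirror map $\tau_D$ --- exactly the mechanism used repeatedly in the proofs that this paper does contain. So your root-stack/orbifold-limit route is genuinely different from the route the paper attributes to \cite{You25}; that in itself is legitimate, but your outline must then carry the full weight of the non-nef difficulties, and it does not.

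The concrete gap is in your step (1) and in your identification of $\tau_D$. You propose to get the $I$-function of the root stack $\mathcal{X}_{D,r}$ from the projective-bundle theorem, the root construction along $X_\infty$, and quantum Lefschetz, and then let $r\to\infty$. But quantum Lefschetz (and the convexity needed for the hypergeometric modification on the orbifold side) is precisely what fails when $D$ is not nef; this is why the nef-case argument of \cite{FTY} does not extend, and why a new theorem was needed at all. Simply taking $r\to\infty$ of the nef-case machinery cannot produce the genuinely new ingredient of the statement, namely the mirror map in $D$, $\tau_D(y,z)$: this series contains strictly positive powers of $z$, is built from one-point invariants of $D$ twisted by $N_{D/X}$ (equivalently, from the $I$-function (\ref{I-func-Y-D-0}) of $(Y,D_0)$, or of $N_D$), and records curves contained in $D$ with negative normal degree. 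Calling it ``the limit of the age contributions'' is not correct --- ages only encode contact orders and are present already in the nef case, where $\tau_D$ is trivial; in any root-stack approach $\tau_D$ would have to emerge from the non-convexity corrections, which your outline names as an obstacle but does not control. A secondary but real misreading: in the statement $(P,X_\infty)$ is an ambient smooth pair with $X\subset P$ a hypersurface and $D=X\cap X_\infty$ (compare the example $\mathrm{Bl}\,\mathbb{P}^3\subset\mathbb{P}(\mathcal{O}_{\mathbb{P}^3}(-1)\oplus\mathcal{O}_{\mathbb{P}^3})$, where $h$ is an ambient class and ``amb'' refers to ambient cohomology), not the bundle $\mathbb{P}(\mathcal{O}_X\oplus\mathcal{O}_X(D))$ over $X$; so the structure of your step (1) is set up for the wrong geometry. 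In short, the two points you flag as ``the main obstacle'' are exactly the content of the theorem in the non-nef case, and the proposal leaves them unproved.
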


In this section, we give a more detailed explanation of Theorem \ref{thm-rel-I}, and then we specialize to the log Calabi--Yau case. There are two types of mirror maps that can appear in Theorem \ref{thm-rel-I}. We now study these mirror maps. 

\subsection{The mirror map in $D$}
Let $Y:=\mathbb P(\mathcal O_{D}\oplus N_{D/ X})$ and $D_{0}$ be the zero divisor of $Y$.  The $I$-function $I_{(Y,D_{0})}(y,y_0,\tau,z)$ of $(Y,D_{0})$ is
\begin{equation}\label{I-func-Y-D-0}
\begin{split}
   e^{h_0\log y_{0}/z}\sum_{\beta\in \on{NE}(D)}\sum_{k\geq c_1(N_{D/X})\cdot\beta} & J_{D, \beta}(\tau,z)y^{\beta}y_{0}^k[1]_{-k+c_1(N_{D/X})\cdot\beta}
\cdot\left(\frac{\prod_{a\leq 0}(h_{0}+az)}{\prod_{a\leq k}(h_{0}+az)}\right)\\
&\cdot\prod_{k-c_1(N_{D/X})\cdot\beta>0}\frac{1}{h_{0}-c_1(N_{D/X})+(k-c_1(N_{D/X})\cdot\beta)z},
\end{split}
\end{equation}
where $h_{0}:=c_1(\mathcal O_{Y}(1))$ and $\tau \in H^2(D)$. 
We recall that $\tau_{D}$ is the mirror map between the $J$-function of $(Y,D_{0})$ and the $I$-function of $(Y,D_{0})$.

If we assume that $D$ is nef, then the mirror map $\tau_D$ is trivial. In general, the mirror map $\tau_{D}$ may contain non-zero coefficients of positive powers of $z$.

We also claim that the mirror map for $(Y,D_0)$ is the same as the mirror map for $N_D$. When $-K_X$ is nef, it is easy to see why the mirror map is the same as the mirror map for $N_D$. By a direct computation, the mirror map is
\[
\tau_{D}:=\sum_{\substack{\beta\in \NE(D):d:=-K_{D}\cdot\beta\geq 2\\-K_{D}\cdot\beta=-c_1(N_{D/X})\cdot\beta}}\left(\langle [\on{pt}]\psi^{d-2}\rangle_{0,1,\beta}^{D}(-1)^{d-1}(d-1)!\right)y^\beta h_{0}.
\]
 Identifying $h_0$ with the divisor class $[D]$, we have the mirror map for $N_D$. Here we do not assume that $D$ is anticanonical. If $D$ is anticanonical, then $D=-K_X$ is nef and $\tau_D$ is trivial.

Now we consider the case where $-K_X$ is not nef. 
The mirror map appears only when $k<0$. In this case, the product
\[
\frac{\prod_{a\leq 0}(h_0+az)}{\prod_{a\leq k}(h_0+az)}=\prod_{k<a\leq 0}(h_0+az)
\]
includes a factor $h_0$. Recall that, if $k>c_1(N_{D/X})\cdot\beta$, the $I$-function takes values in $H^*(D_0)$ and $\iota^*h_0=0$, where $\iota:D\hookrightarrow X$ is the inclusion map. Hence, we need to have $k=c_1(N_{D/X})\cdot\beta$. Therefore, the $z^{a}$-coefficients take values in $h_0\cup H^*(Y)$ for $a\geq 0$. The coefficients are exactly the same as the corresponding coefficients of the $I$-function of $N_D$, where $h_0$ becomes the divisor class $[D]$. That means that the mirror maps are the same. The mirror map is of the form
\begin{align*}
\tau_D(y,z):=\sum_{k \geq 0}\left[I_{(Y,D_{0})}-z\right]_{z^{k}}z^k=\sum_{k\geq 0,\delta \in H^*(D),\beta\in \NE(D)}\tau_{D,\delta,k,\beta}(\delta\cup h_0) z^k y^\beta,
\end{align*}
where $\left[\cdots\right]_{z^k}$ is the coefficient of $z^k$; $\delta\in H^*(D)$; and $\tau_{D,\delta,k,\beta}$ is the $(\delta\cup h_0) z^k y^\beta$-coefficient of the $I$-function (\ref{I-func-Y-D-0}). Furthermore, it satisfies $$\frac{1}{2}\deg(\delta)+k=(-c_1(N_{D/X})+K_{D})\cdot\beta$$ for the coefficient of $y^\beta$. The degree of $z$ is bounded above by $(-c_1(N_{D/X})+K_{D})\cdot\beta$ for the coefficient of $y^\beta$.

The class $\delta\cup h_0$ can be seen as coming from the class $\iota_*\delta\in H^*(X)$ via the degeneration formula, where $\iota:D\hookrightarrow X$ is the inclusion map. This is because in the degeneration formula, there is a lifting choice such that the insertions $\iota_*\delta$ are distributed to the $Y$-side and become $\delta\cup h_0\in H^*(Y)$. For the curve class $\beta\in \NE(D)$, we consider it as a curve class in $X$ by $\iota_*\beta\in \NE(X)$. By abuse of notation, we still denote it by $\beta$. Therefore, the mirror map $\tau_D$ takes values in $H^*(X)$ before the degeneration:
\begin{align}\label{tau-D}
\tau_D(y,z)=\sum_{k\geq 0,\delta \in H^*(D),\beta\in \NE(D)}\tau_{D,\delta,k,\beta}(y)(\iota_*\delta) z^k y^\beta.
\end{align}
 When we say imposing the mirror map in $D$, the precise meaning is to consider the $J$-function of $X$ with the parameter $\tau=\tau_D(y,z)$:
\begin{align}\label{J-func-mirror-map}
J_X(\tau_D(y,z),z)=z+\tau_D(y,z)+\sum_{\substack{\beta\in \NE(X), l\geq 0,\\ \gamma\in H^*(X)}}\langle [\gamma]\psi^a, \tau_D(y,\psi), \ldots,\tau_D(y,\psi) \rangle_{0,l+1,\beta}^X[\gamma^\vee]\frac{y^\beta}{l!}.
\end{align}

The coefficients of $\tau_{D}(y,z)$ can be computed from the $I$-function (\ref{I-func-Y-D-0}). For example, if $\beta\neq 0$, the coefficient of $y^\beta z^{(-c_1(N_{D/X})+K_{D})\cdot\beta}$ is
    \[
 \langle [\on{pt}]\psi^a\rangle_{0,1,\beta}^D  (-c_1(N_{D/X})\cdot\beta-1)!(-1)^{-c_1(N_{D/X})\cdot\beta-1} D\in H^*(X).
    \]
    When $\beta=0$, the coefficient of the $I$-function is just $z+\tau$. Note that in some cases, the invariants $\langle [\on{pt}]\psi^a\rangle_{0,1,\beta}^D$ are simply trivial. For example, when $D$ is Calabi--Yau, the invariants vanish by the virtual dimension constraint. Indeed, the degree of $z$ is bounded by $(-c_1(N_{D/X})+K_{D})\cdot\beta-2$ in this case.

We write
\[
J_X(\tau_D(y,z),z)=\sum_{\beta\in \NE(X)}J_{X,\beta}(\tau_D(y,z),z) y^\beta.
\]
Note that there are also $y^\beta$ in $\tau_D(y,z)$. It should be taken into account when we extract the coefficient of $y^\beta$ from $J_X(\tau_D(y,z),z)$.

For $D\cdot\beta<0$, we claim that $J_{X,\beta}(\tau_D(y,z),z)$ takes value in $D\cup H^*(X)$. This is because of the following. 
\begin{lemma}
For $\gamma\in H^*(X)$, if $D\cdot\beta<0$ and $\gamma^\vee\not\in D\cup H^*(X)$, then 
\[
\left[\sum_{\beta^\prime\in \NE(X), l\geq 0}\langle [\gamma]\psi^a, \tau_D(y,\psi), \ldots,\tau_D(y,\psi) \rangle_{0,l+1,\beta^\prime}^X[\gamma^\vee]\frac{y^{\beta^\prime}}{l!}\right]_{y^\beta}=0.
\]
\end{lemma}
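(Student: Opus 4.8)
The plan is to argue by a dimension/degree bookkeeping on the terms appearing in the mirror map $\tau_D$ together with the divisor axiom, reducing everything to the observation that $\tau_D(y,z)$ is valued in $\iota_*H^*(D) \subset D\cup H^*(X)$. First I would recall from \eqref{tau-D} that every monomial of $\tau_D(y,z)$ other than the trivial $z$-term is of the form $\tau_{D,\delta,k,\beta'}(y)(\iota_*\delta)z^k y^{\beta'}$ with $\beta'\in\NE(D)$, and that $\iota_*\delta = \delta'\cup D$ for some $\delta'\in H^*(X)$ by the projection formula (since $\iota_*\delta=\iota_*(\iota^*\delta''\cdot 1)=\delta''\cup\iota_*1=\delta''\cup D$ for a lift $\delta''$). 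Hence each genuine insertion $\tau_D(y,\psi)$ in the correlator lands in the ideal $D\cup H^*(X)$, except possibly the bare $z$-term, which however carries no curve class and only contributes via the string equation to terms with $\beta'=0$.

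Next I would analyze the correlator $\langle[\gamma]\psi^a,\tau_D(y,\psi),\ldots,\tau_D(y,\psi)\rangle^X_{0,l+1,\beta'}$ entering the bracket. The key point is a Frobenius-type orthogonality: if the output class $\gamma^\vee$ is \emph{not} in $D\cup H^*(X)$, then dually $\gamma$ pairs nontrivially only against classes in $D\cup H^*(X)$, i.e. $\int_X \gamma\cup(D\cup H^*(X))$ can be nonzero but $\int_X\gamma\cup\xi$ vanishes whenever $\xi$ is supported off $D$ in the appropriate sense. Equivalently, writing $H^*(X)=\iota_*H^*(D)\oplus(\text{complement})$ with respect to the Poincaré pairing, the hypothesis $\gamma^\vee\notin D\cup H^*(X)=\iota_*H^*(D)$ forces $\gamma$ to lie in the orthogonal complement of $\iota_*H^*(D)$. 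Now apply the divisor/fundamental-class reduction to each $\tau_D(y,\psi)$-insertion: using that these insertions are $\iota_*(\text{something})$, a standard degeneration-to-the-normal-cone or localization comparison (equivalently, the splitting structure recalled after Definition \ref{rel-inv-neg}) shows the invariant factors through relative invariants of $(X,D)$ with a marking mapped to $D$. Since $D\cdot\beta'<0$ here — because $D\cdot\beta<0$ and every $\beta'$ appearing in $\tau_D$ satisfies $D\cdot\beta'\le 0$ with the $y^\beta$-coefficient extraction forcing the total to be $\beta$ — the relevant relative moduli spaces with the required contact pattern are empty, or the virtual class pushes forward to something killed by pairing with $\gamma\notin\iota_*H^*(D)$.

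More carefully, I would isolate the $y^\beta$-coefficient: writing $\beta=\beta'+\sum_i\beta_i$ where the $\beta_i\in\NE(D)$ come from the $\tau_D$-insertions and $\beta'$ is the GW curve class of the $X$-correlator, the constraint $D\cdot\beta<0$ together with $D\cdot\beta_i\le 0$ for all $i$ (indeed $D\cdot\beta_i = -c_1(N_{D/X})\cdot\beta_i + K_D\cdot\beta_i \le 0$ by the degree bound stated before \eqref{tau-D}, since these are effective in $D$ and $D\cong -K_X|_D$ makes $D\cdot\beta_i=-c_1(N_{D/X})\cdot\beta_i$) means some curve class is "negative against $D$", which can only be realized by curves components inside $D$; the Gromov--Witten invariant of $X$ in such a class, with all non-output insertions pulled back from $D$, then has its virtual class supported on the locus of stable maps into $D$, and pushing forward along $\ev$ produces a class in $\iota_*H^*(D)=D\cup H^*(X)$. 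Pairing with $\gamma^\vee$ where $\gamma\notin\iota_*H^*(D)$ yields zero.

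The main obstacle, I expect, is making the "virtual class supported on maps into $D$" step rigorous when $-K_X$ is not nef: one cannot simply invoke nefness-based positivity, so I would instead route the argument through the degeneration formula for the degeneration of $X$ to the normal cone of $D$ (as the paper does elsewhere), expressing $J_{X,\beta}$ with these $\tau_D$-insertions as a sum of products of relative invariants of $(X,D)$ and of $(Y,D_0)$, and then checking that in every term with $D\cdot\beta<0$ the $(X,D)$-factor either vanishes or outputs a class in the image of $\iota_*$, because the contact orders forced at $D$ are non-positive overall and the FWY relative cycle $\mathfrak c_\Gamma(X/D)$ in that regime is constrained by Proposition \ref{...} to land in a product involving $D^{m_\pm}$, whose $\ev$-pushforward to $H^*(X)$ is in $D\cup H^*(X)$. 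A secondary technical point is the bookkeeping of which $\beta'$ and which partition of $\beta$ contribute after extracting $[\cdot]_{y^\beta}$, but that is routine once the structural vanishing is in place.
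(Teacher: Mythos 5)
Your central step does not hold up. You argue that because some curve class is $D$-negative, the virtual class of the absolute moduli space is supported on stable maps into $D$, so its evaluation pushforward lies in $\iota_*H^*(D)$ and pairing kills the coefficient of $\gamma^\vee$. Three problems: (i) the correlator is taken in the class $\beta^\prime$, not $\beta$ — extracting $[\cdot]_{y^\beta}$ gives $\beta=\beta^\prime+\sum_i\beta_i$ with the $\beta_i$ carried by the $\tau_D$-insertions, and since every nontrivial term of $\tau_D$ has $c_1(N_{D/X})\cdot\beta_i<0$, the class $\beta^\prime$ can perfectly well satisfy $D\cdot\beta^\prime\geq 0$ even though $D\cdot\beta<0$, so there is no negativity available on the moduli space you integrate over; (ii) even when $D\cdot\beta^\prime<0$, only some component of each stable map is forced into $D$, not the whole map, so ``virtual class supported on maps into $D$'' is not available; (iii) the target you would reach by such a support argument is $\iota_*H^*(D)$, and your identification $\iota_*H^*(D)=D\cup H^*(X)$ is false in general ($\iota_*\delta$ lies in $D\cup H^*(X)$ only when $\delta$ is restricted from $X$; for a $K3$ divisor most of $H^2(D)$ is not), whereas the lemma needs the finer conclusion that the coefficient lies in $D\cup H^*(X)$ — that is exactly what cancels the $D$ in the denominator of the hypergeometric factor. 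The same conflation invalidates your opening projection-formula claim that every $\iota_*\delta$ equals $\delta^\prime\cup D$, and your degree bookkeeping ($D\cdot\beta_i=-c_1(N_{D/X})\cdot\beta_i+K_D\cdot\beta_i$) is off: with $D$ anticanonical, adjunction gives $K_D=0$ and $D\cdot\beta_i=c_1(N_{D/X})\cdot\beta_i$, although the conclusion $D\cdot\beta_i<0$ for the nontrivial terms of $\tau_D$ is correct.

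You do anticipate the difficulty and propose to reroute through the degeneration of $X$ to the normal cone of $D$, which is indeed how the paper proceeds, but the entire content of the proof lies in the part you defer as a ``secondary technical point.'' The paper places the $\gamma$-marking on the $X$-side (using $\gamma^\vee\notin D\cup H^*(X)$) and the $\tau_D$-insertions on the $Y$-side, and then kills every bipartite tree: the contribution of an extremal $Y$-side vertex is the $[\eta_i]z^{-1}$-coefficient of the $I$-function of $(Y,D_0)$ (equivalently of $N_D$, since the mirror maps agree), and a case analysis on $n+D\cdot\beta$ — the factor $h+D$ vanishing on $D_0$ when $n+D\cdot\beta<0$, extra powers of $z^{-1}$ when $n+D\cdot\beta>0$, and again an extra $z^{-1}$ from $\frac{1}{h+z}$ because $n=k_i>0$ — shows it vanishes unless the class is a multiple of a fiber; extremal $X$-side vertices are pruned using the dimension count $\deg\bigl((\eta^\prime)^\vee\bigr)=2$, and iterating forces all curve classes onto a single $X$-side vertex with several positive-contact edges, giving $D\cdot\beta>0$ and contradicting $D\cdot\beta<0$. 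This vanishing mechanism, not the bookkeeping of splittings, is the proof, and your proposal does not supply it.
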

\begin{proof}
We consider the degeneration of $X$ to the normal cone of $D$. Since $\gamma^\vee\not\in D\cup H^*(X)$, we can assume that the distingushed marking (i.e. the first marking) $p_1$ with insertion $\gamma$ away from $D$, hence is distrbuted to the $X$-side. All the other markings are distributed to the $Y$-side as we explained earlier in this section. 

The degeneration graphs are trees since we are in the genus zero case. At the end of the tree there are vertices with one incident edge. If such vertices are on the $Y$-side, then the degeneration contribution of this vertex is of the form
$$\left(\sum_{l\geq 0}\frac{1}{l!}\langle [\eta^\vee_i]_{k_i},\tau_D,\ldots,\tau_D\rangle_{0,l+1,\beta_i}^{(Y,D_0)}\right)$$ 
which forms the $z^{-1}$-coefficient of the $I$-function:
\begin{align*}
\sum_{\beta\in \on{NE}(D), n\geq 0}y^{\beta}y_0^{n}J_{D,\beta}(\tau_{0,2},z)\frac{\prod_{a\leq 0}(h+{D}+az)}{\prod_{a\leq n+D\cdot\beta}(h+{D}+az)}\left(\prod_{\text{if }n>0}\frac{1}{h+z}\right)[1]_{-n}
\end{align*}
that takes value in $[\eta_i]_{-k_i}$ for $k_i>0$. On one hand, if $n+D\cdot\beta<0$, we have a factor of $h+D$ which is zero in $H^*(D_0)$. On the other hand, we can not have $n+D\cdot \beta> 0$ because this will give an extra factor of $z^{-1}$.  Therefore, we must have $n+D\cdot\beta=0$, then the factor 
$$\frac{\prod_{a\leq 0}(h+{D}+az)}{\prod_{a\leq n+D\cdot\beta}(h+{D}+az)}=1.$$
 Moreover, because $n=k_i>0$, we have a factor of $\frac{1}{h+z}$ which contribute to an extra factor of $z^{-1}$. So the $z^{-1}$-coefficient is zero unless $\beta_i$ is multiple of a class of a fiber. Then all the curve classes are distributed to the $X$-side.

We can also compute the invariants $\left(\sum_{l\geq 0}\frac{1}{l!}\langle [\eta^\vee_i]_{k_i},\tau_D,\ldots,\tau_D\rangle_{0,l+1,\beta_i}^{(Y,D_0)}\right)$ by virtual localization of the standard $\mathbb C^\times$-action on $Y$. Then we will consider the corresponding invariants of $N_D$. Note that the mirror map for $(Y,D_0)$ is the same as the mirror map for $N_D$. Using the $I$-function of $N_D$, the same argument implies that the invariants are zero.

If such vertices are on the $X$-side, then the degeneration contribution is $\langle [\eta^\prime]_k\rangle_{0,1,\beta^\prime}^{(X,D)}$. Because $D$ is anticanonical, $\frac{1}{2}\deg(\eta^\prime)=\dim_{\mathbb C}D-1$. Therefore, $\deg\left((\eta^\prime)^\vee\right)=2$.  

Now we may remove these vertices on the $X$-side with one incident edge and consider this half-edge as a leg that records the marking $\eta^\prime$. We denote this new graph as $\mathfrak G^{\prime}$. If there are vertices of $\mathfrak G^{\prime}$ on the $Y$-side that has one incident edge, then we can show that the degeneration contributions are zero by a similar argument. Then we can conclude that all the curve classes are distributed to the $X$-side. There is one vertex on the $X$-side with several edges. This implies that $D\cdot\beta>0$ which contradicts our assumption that $D\cdot\beta<0$. Hence, we conclude the proof.

\end{proof}

Then the relative $I$-function $I_{(X,D)}(y,\tau_D(y,z),z)$ is
\begin{align}\label{rel-I-func-tau}
    \sum_{\beta\in \NE(X)} J_{X,\beta}(\tau_D(y,z),z)y^\beta\frac{\prod_{ a\leq D\cdot\beta}(D+az)}{\prod_{ a\leq 0}(D+az)} \prod_{\text{if }D\cdot\beta>0}\frac{1}{D+(D\cdot\beta) z}[1]_{-D\cdot\beta}.
\end{align}

\begin{remark}
When $D\cdot\beta<0$, we have a factor $D$ in the denominator which is not well-defined. At the same time, the $J$-function $J_{X,\beta}(\tau_D(y,z),z)$ will take value in $D\cup H^*(X)$. The expression (\ref{rel-I-func-tau}) formally means that the class $D$ in the denominator and the numerator will cancel.
\end{remark}

\subsection{The relative mirror map}\label{sec:rel-mirror-map}
Now we compute the relative mirror map when $D$ is a smooth anticanonical divisor of $X$. We need to extract the $z^{\geq 0}$-coefficient of the $I$-function (\ref{rel-I-func-tau}).  

We need to examine possible contributions of the term $\tau_D(y,z)$ in $$J_X(\tau_D(y,z),z)=z+\tau_D(y,z)+O(z^{-1})$$ to the relative mirror map. Recall that $\tau_D(y,z)$ only appears when $K_X\cdot\beta>0$. Furthermore, as $D$ is Calabi--Yau, the $z$-degree of $\tau_D(y,z)$ is bounded above by $(-c_1(N_{D/X})+K_{D})\cdot\beta-2$, hence it is bounded above by $K_X\cdot\beta-2$. As $D$ is anticanonical, the hypergeometric modification is
\[
\frac{\prod_{ a\leq D\cdot\beta}(D+az)}{\prod_{ a\leq 0}(D+az)}=\frac{1}{\prod_{ D\cdot\beta <a\leq 0}(D+az)}=\frac{1}{z^{-D\cdot\beta-1}}\prod_{ D\cdot\beta <a\leq 0}\frac{1}{1+(D/az)},
\]
where $D\cdot\beta=-K_X\cdot\beta$.
Combining these together, we see that $\tau_D(y,z)$ does not contribute to the mirror map because the highest power of $z$ is $z^{-1}$.  Hence, we can write:
\[
I_{(X,D)}=z+I_0 z^0+O(z^{-1}).
\]

The $z^0$-coefficient $I_0$ of the relative $I$-function in Theorem \ref{thm-rel-I}, denoted by $\tau(y)$, is the (non-extended) relative mirror map:
\begin{equation}\label{mirror-map}
\begin{split}
&\sum_{i=1}^{\mathrm r}p_i \log y_i 
\\
+&\sum_{\substack{\beta\in \NE(X) \\ d=D\cdot\beta\geq 2}}\left[\sum_{l\geq 0,\beta^\prime\in \NE(X)}\frac{1}{l!}\langle [\on{pt}]\psi^{d-2},\tau_D,\ldots,\tau_D\rangle_{0,1+l,\beta^\prime}^X y^{\beta^\prime}\right]_{y^\beta}y^\beta (d-1)![1]_{-d}.
\end{split}
\end{equation}

We can state Theorem \ref{thm-rel-I} in the  log Calabi--Yau case as follows.
\begin{theorem}\label{thm-rel-I-cy}
Given a smooth log Calabi--Yau pair, the $I$-function $I_{(X,D)}(y,\tau_D(y,z),z)$ in (\ref{rel-I-func-tau})
is equal to the $J$-function $J_{(X,D)}(\tau,z)$ of $(X,D)$ via the relative mirror map $\tau(y)$ in (\ref{mirror-map}):
\begin{align*}
&J_{(X,D)}(\tau(y),z)\\
= & I_{(X,D)}(y,\tau_D(y,z),z)\\
= &  \sum_{\beta\in \NE(X)} J_{X,\beta}(\tau_D(y,z),z)y^\beta\frac{\prod_{ a\leq D\cdot\beta}(D+az)}{\prod_{ a\leq 0}(D+az)} \prod_{\text{if }D\cdot\beta>0}\frac{1}{D+(D\cdot\beta) z}[1]_{-D\cdot\beta}.
\end{align*}
\end{theorem}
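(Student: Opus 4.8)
The plan is to derive Theorem~\ref{thm-rel-I-cy} as a specialization of Theorem~\ref{thm-rel-I}, combined with the two analyses of the mirror maps carried out in Section~\ref{sec:rel-mirror}. First I would recall that in the log Calabi--Yau case $D=-K_X$ is nef, so by the discussion following \eqref{I-func-Y-D-0} one might worry the mirror map in $D$ is trivial; but the relevant pair for the mirror map is $(Y,D_0)$ with $Y=\mathbb P(\mathcal O_D\oplus N_{D/X})$, and since $-K_X$ is \emph{not} assumed nef this mirror map $\tau_D(y,z)$ is genuinely present. So the first step is to invoke Theorem~\ref{thm-rel-I}: the ambient $I$-function $I_{(X,D),\on{amb}}(y,\tau,z)=I_++I_-$, after imposing the mirror map in $D$ (i.e. setting $\tau=\tau_D(y,z)$ and using $J_X(\tau_D(y,z),z)$ as in \eqref{J-func-mirror-map}), lies on $\iota_*\mathcal L_{(X,D)}$. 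In the anticanonical case $D\cdot\beta=-K_X\cdot\beta$, so the factor $h$ coming from the ambient space $P$ can be removed (one restricts from $(P,X_\infty)$ to $(X,D)$), and the two pieces $I_+$ and $I_-$ combine into the single sum \eqref{rel-I-func-tau}: for $D\cdot\beta>0$ one has the factor $\frac{1}{D+(D\cdot\beta)z}$, for $D\cdot\beta\le 0$ that factor is absent (the ``$\prod_{\text{if }D\cdot\beta>0}$'' convention), and the hypergeometric modification $\frac{\prod_{a\le D\cdot\beta}(D+az)}{\prod_{a\le 0}(D+az)}$ is uniform. The subtlety for $D\cdot\beta<0$ — a formal factor of $D$ in the denominator — is handled by the Lemma just proved: $J_{X,\beta}(\tau_D(y,z),z)\in D\cup H^*(X)$ in that range, so the $D$ cancels, exactly as noted in the Remark.

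Next I would identify the point on the Lagrangian cone. A point of $\mathcal L_{(X,D)}$ of the form $z+I_0z^0+O(z^{-1})$ equals $J_{(X,D)}(\tau,z)$ with $\tau=I_0$, by the standard characterization of Givental's cone (any family on the cone whose asymptotic expansion begins $z+\tau+O(z^{-1})$ is the $J$-function evaluated at $\tau$). So the second step is to check that the expression \eqref{rel-I-func-tau} indeed has the shape $z+I_0 z^0+O(z^{-1})$ after imposing $\tau_D$. This is precisely the content of Section~\ref{sec:rel-mirror-map}: since $D$ is Calabi--Yau the $z$-degree of $\tau_D(y,z)$ for class $\beta$ is bounded by $(-c_1(N_{D/X})+K_D)\cdot\beta-2\le K_X\cdot\beta-2$, while the hypergeometric factor for $D\cdot\beta=-K_X\cdot\beta<0$ carries $z^{-D\cdot\beta-1}=z^{K_X\cdot\beta-1}$ in the denominator, so $\tau_D$ contributes only at order $z^{-1}$ and below — it does not perturb the $z^{\ge 0}$ part except through the $J_X(\tau_D,z)$ insertions, which is already accounted for. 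Hence $I_{(X,D)}=z+I_0z^0+O(z^{-1})$ with $I_0=\tau(y)$ the relative mirror map \eqref{mirror-map}, whose explicit form ($\sum p_i\log y_i$ plus the $[1]_{-d}$-terms weighted by $\langle[\on{pt}]\psi^{d-2},\tau_D,\dots,\tau_D\rangle_{0,1+l,\beta'}^X$) is read off from the $z^0$-coefficient of \eqref{rel-I-func-tau}.

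The third and final step is purely bookkeeping: combine the above to write $J_{(X,D)}(\tau(y),z)=I_{(X,D)}(y,\tau_D(y,z),z)$, and then expand the right-hand side according to \eqref{rel-I-func-tau}, taking care — as flagged after \eqref{J-func-mirror-map} — that the variable $y^\beta$ appears both explicitly and inside $\tau_D(y,z)$, so extracting the $y^\beta$-coefficient of $J_{X,\beta}(\tau_D(y,z),z)$ must include contributions from lower-degree $\beta'$ through the mirror map insertions. I expect the main obstacle to be the case $D\cdot\beta<0$: one must argue carefully that \eqref{rel-I-func-tau} is well-defined there, i.e. that the spurious $D$ in the denominator is genuinely cancelled by the $D\cup H^*(X)$-valuedness of $J_{X,\beta}(\tau_D(y,z),z)$ supplied by the Lemma, and moreover that \emph{after} this cancellation the resulting class still lies on $\iota_*\mathcal L_{(X,D)}$ — this is where the degeneration-formula interpretation of $\tau_D$ and the structure of the relative state space $\mathfrak H$ (with $\mathfrak H_{-d}=H^*(D)$ for $d>0$) must be used to see that the output is a legitimate relative $J$-function value rather than just a formal series.
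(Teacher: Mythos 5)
Your proposal reconstructs exactly the argument the paper itself uses: Theorem \ref{thm-rel-I-cy} is obtained from Theorem \ref{thm-rel-I} by specializing to the anticanonical case, using the Lemma to cancel the formal $D$ in the denominator when $D\cdot\beta<0$, checking via the $z$-degree bound on $\tau_D(y,z)$ against the $z^{K_X\cdot\beta-1}$ in the hypergeometric factor that the $I$-function has the form $z+\tau(y)z^0+O(z^{-1})$ with $\tau(y)$ as in (\ref{mirror-map}), and then invoking the standard characterization of points on Givental's cone. This is essentially the same approach as the paper (which presents these steps as the discussion in Section \ref{sec:rel-mirror} rather than a separate proof), so no further comparison is needed; only your opening phrase suggesting $D=-K_X$ is nef in the log Calabi--Yau case should be dropped, since you correctly proceed without that assumption.
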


\subsection{The extended $I$-function}

We have a similar theorem when we consider the $S$-extended $I$-function. Here we consider the simplest case where $S:=\{1\}$. That means we consider the following $S$-extended $I$-function. 
\begin{defn}\label{def-relative-I-function-extended}
The $S$-extended $I$-function of $(X,D)$, for $S:=\{1\}$, is:
\begin{equation}\label{rel-I-func-extended}
\begin{split}
&I_{(X,D)}^{S}(y,x_1,\tau_D(y,z),z):=\\
&\sum_{\beta\in \on{NE}(X),k\in \mathbb Z_{\geq 0} }J_{X, \beta}(\tau_D(y,z),z)y^{\beta}\frac{ x_1^{k}}{z^{k}k!}\frac{\prod_{ a\leq D\cdot\beta}(D+az)}{\prod_{ a\leq 0}(D+az)}\prod_{\text{if }D\cdot\beta>k}\frac{1}{D+(D\cdot\beta-k) z}[{1}]_{-D\cdot \beta+k}.
\end{split}
\end{equation}

\end{defn}

Now we assume that $D$ is the smooth anticanonical divisor of $X$. The $S$-extended $I$-function is of the form
\[
I^S_{(X,D)}=z+I_0z^0+O(z^{-1}).
\]
 The $z^0$-coefficient $I_0$, now denoted by $\tau(y,x_1)$, is the relative mirror map:
\begin{equation}\label{mirror-map-extended}
\begin{split}
\tau(y,x_1)=&\sum_{i=1}^r p_i\log y_i+x_1[1]_{1}\\
&+\sum_{\substack{\beta\in \NE(X) \\ d=D\cdot\beta\geq 2}}\left[\sum_{l\geq 0,\beta^\prime}\frac{1}{l!}\langle [\on{pt}]\psi^{d-2},\tau_D,\ldots,\tau_D\rangle_{0,1+l,\beta^\prime\in \NE(X)}^X y^{\beta^\prime}\right]_{y^\beta}y^\beta (d-1)![1]_{-d}.
\end{split}
\end{equation}

The relative mirror theorem for the $S$-extended $I$-function is
\begin{theorem}\label{thm-rel-I-cy-extended}
Given a smooth log Calabi--Yau pair, the $I$-function $I^S_{(X,D)}(y,\tau_D(y,z),z)$ in (\ref{rel-I-func-extended})
is equal to the $J$-function $J_{(X,D)}(\tau,z)$ of $(X,D)$ via the relative mirror map $\tau(y)$ in (\ref{mirror-map-extended}):
\begin{align*}
J_{(X,D)}(\tau(y,x_1),z)= & I^S_{(X,D)}(y,x_1,\tau_D(y,z),z).
\end{align*}
\end{theorem}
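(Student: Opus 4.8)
The plan is to deduce this as the $S$-extended counterpart of Theorem~\ref{thm-rel-I-cy}, by combining the $S$-extended relative mirror theorem with the standard characterization of the $J$-function as the unique family on Givental's Lagrangian cone of the shape $z+\tau+O(z^{-1})$. First I would invoke the $S=\{1\}$ case of the relative mirror theorem of \cite{You25} (the $S$-extended analogue of Theorem~\ref{thm-rel-I}, proved there by the degeneration-to-the-normal-cone analysis): after imposing the mirror map $\tau_D$ in $D$, the extended $I$-function $I^S_{(X,D)}(y,x_1,\tau_D(y,z),z)$ of~(\ref{rel-I-func-extended}) lies on $\iota_*\mathcal L_{(X,D)}$. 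By the Givental formalism for relative Gromov--Witten theory with negative contact orders established in \cite{FWY}, $\mathcal L_{(X,D)}$ is a Lagrangian cone whose tangent spaces $T$ satisfy $zT\subseteq T$ and $T\cap\mathcal L_{(X,D)}=zT$; hence any family of points on $\iota_*\mathcal L_{(X,D)}$ of the form $z+g(\mathbf s)+O(z^{-1})$ with $g(\mathbf s)\in\mathfrak H$ equals $J_{(X,D)}(g(\mathbf s),z)$. Thus the theorem reduces to two points: that $I^S_{(X,D)}(y,x_1,\tau_D(y,z),z)=z+I_0z^0+O(z^{-1})$, and that the $z^0$-coefficient $I_0$ equals the map $\tau(y,x_1)$ of~(\ref{mirror-map-extended}).

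The technical heart is the first point, which I would obtain by repeating the degree count of \S\ref{sec:rel-mirror-map}; the $S$-extension changes nothing essential. Since $(X,D)$ is log Calabi--Yau, $D=-K_X$ is anticanonical and $D$ is itself Calabi--Yau, so the components of $\tau_D$ obey the degree constraint $\frac{1}{2}\deg\delta+k=-D\cdot\beta$ recalled in \S\ref{sec:rel-mirror}. A virtual-dimension count then gives, for $\beta\neq 0$, that $J_{X,\beta}(\tau_D(y,z),z)=O(z^{-1})$ when $D\cdot\beta\leq 1$, and $J_{X,\beta}(\tau_D(y,z),z)=O(z^{1-D\cdot\beta})$ when $D\cdot\beta\geq 2$, the coefficient of $z^{1-D\cdot\beta}$ being the Gromov--Witten sum appearing in~(\ref{mirror-map-extended}); moreover, for $D\cdot\beta<0$ the preceding lemma gives $J_{X,\beta}(\tau_D(y,z),z)\in D\cup H^*(X)$, which both makes sense of the formal factor $1/D$ in the hypergeometric modification and forces the $I_-$-type contribution to be $O(z^{-1})$. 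Substituting these bounds into~(\ref{rel-I-func-extended}): when $D\cdot\beta\geq 2$ the factor $\prod_{a=1}^{D\cdot\beta-1}(D+az)$ has $z$-degree $D\cdot\beta-1$ with scalar leading coefficient $(D\cdot\beta-1)!$, so the term lands in $O(z^0)$ with $z^0$-part $(D\cdot\beta-1)!\,[J_{X,\beta}(\tau_D,z)]_{z^{1-D\cdot\beta}}[1]_{-D\cdot\beta}$; when $D\cdot\beta\leq 1$ and $\beta\neq 0$ the term is $O(z^{-1})$; the $\beta=0$ term together with the $I$-function prefactor $e^{\sum_i p_i\log y_i/z}$ contributes $z+\sum_i p_i\log y_i$, while the $k=1$, $\beta=0$ summand contributes the new term $x_1[1]_1$ (coming from $z\cdot x_1z^{-1}[1]_1$); and the extra factor $x_1^k/(z^kk!)$, $k\geq 0$, only lowers powers of $z$. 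Collecting the $z^0$-terms reproduces~(\ref{mirror-map-extended}), which is the second point.

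With both points in hand, $I^S_{(X,D)}(y,x_1,\tau_D(y,z),z)\in\iota_*\mathcal L_{(X,D)}$ has the form $z+\tau(y,x_1)+O(z^{-1})$, and the cone characterization forces it to equal $J_{(X,D)}(\tau(y,x_1),z)$, which is the assertion. I expect the main obstacle to be the degree analysis for $\beta\neq 0$ with $D\cdot\beta<0$: there $\tau_D$ genuinely carries positive powers of $z$ and the hypergeometric modification is only formally defined (the factor $1/D$), so one must run the cancellation $J_{X,\beta}(\tau_D,z)\in D\cup H^*(X)$ from the preceding lemma in tandem with the dimension bookkeeping and check that, once the $D$'s cancel, no power of $z$ above $z^{-1}$ survives. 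This is exactly where the log Calabi--Yau hypothesis is used essentially, both through the degree constraint on the components of $\tau_D$ and through the virtual dimension of the Calabi--Yau divisor $D$. Everything else is formal: the $S$-extension contributes only the manifestly degree-lowering factor $x_1^k/(z^kk!)$, so the argument runs parallel to the proof of Theorem~\ref{thm-rel-I-cy}.
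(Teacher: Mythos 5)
Your proposal matches the paper's treatment: the paper obtains this theorem exactly as the $S=\{1\}$ analogue of Theorem \ref{thm-rel-I-cy}, i.e.\ by invoking the $S$-extended relative mirror theorem of \cite{You25} (the $I$-function with $\tau_D$ imposed lies on $\iota_*\mathcal L_{(X,D)}$) and then repeating the $z$-degree analysis of Section \ref{sec:rel-mirror-map} to see that the extended $I$-function has the form $z+\tau(y,x_1)z^0+O(z^{-1})$ with $z^0$-part (\ref{mirror-map-extended}), the extension only adding the factor $x_1^k/(z^k k!)$ and the new term $x_1[1]_1$. So your argument is correct and essentially identical to the paper's (which in fact states the extended case with no further proof beyond this observation).
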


\subsection{An example}

We consider the degeneration of the quintic threefold $Q_5$ into a union of a quartic threefold $Q_4$ and a blow-up $\on{Bl} {\mathbb P}^3$ of $\mathbb P^3$ along the complete intersection of a quartic and a quintic surfaces:
\[
Q_5\leadsto Q_4\cup_{\on{K3}}\on{Bl} {\mathbb P}^3.
\]

We study the relative mirror symmetry for $(\on{Bl} {\mathbb P}^3,K3)$. As in \cite{DKY}*{Section 4.1}, we can consider $\on{Bl} {\mathbb P}^3$ as a hypersurface in the toric variety $P:=\mathbb P(\mathcal O_{\mathbb P^3}(-1)\oplus \mathcal O_{\mathbb P^3})$. Let $H\in H^2(P)$ be the pullback of the hyperplane class in $H^2(\mathbb P^3)$ and $h=c_1(\mathcal O_P(1))$ be the first Chern class of $\mathcal O_P(1)$. The quasi-Fano variety $\on{Bl} {\mathbb P}^3$ is a hypersurface of $P$ corresponds to the divisor class $4H+h$. There is a toric divisor of the class $h-H$ in $P$. The smooth anticanonical $\on{K3}$ of $\on{Bl} {\mathbb P}^3$ is a complete intersection of $h-H$ and $4H+h$ in $P$.   

The (non-extended) $I$-function of $(\on{Bl} \mathbb P^3, \on{K3})$ is
\begin{equation}\label{I-func-bl-p-3}
    \begin{split}
        zq_1^{H/z}q_0^{h/z}\sum_{d_1,d_0\geq 0}\left(\frac{\prod_{k=1}^{4d_1+d_0}(4H+h+kz)}{\prod_{k=1}^{d_1}(H+kz)^4\prod_{k=1}^{d_0}(h+kz)}\right)\left(\prod_{\text{if } d_0>d_1}\frac{1}{h-H+(d_0-d_1)z}\right)[1]_{-d_0+d_1}q_1^{d_1}q_0^{d_0}.
    \end{split}
\end{equation}

The $I$-function is of the form
\[
I_1z^1+I_0z^0+O(z^{-1}).
\]
By computation, we have
\[
I_1=\sum_{d_0\leq d_1} \frac{(4d_1+d_0)!}{(d_1!)^4d_0!}[1]_{-d_0+d_1}q_1^{d_1}q_0^{d_0},
\]
and
\begin{align*}
I_0=&H\log q_1 +h\log q_0 +\sum_{d_0>d_1}\frac{1}{d_0-d_1} \frac{(4d_1+d_0)!}{(d_1!)^4d_0!}[1]_{-d_0+d_1}q_1^{d_1}q_0^{d_0}\\
+&\sum_{d_0\leq d_1}\frac{(4d_1+d_0)!}{(d_1!)^4d_0!}\left(\sum_{k=1}^{4d_1+d_0}\frac{1}{k}(4H+h)-4\sum_{k=1}^{d_1}\frac {1}{k}H-\sum_{k=1}^{d_0}\frac{1}{k}h\right)[1]_{-d_0+d_1}q_1^{d_1}q_0^{d_0}.
\end{align*}
Note that, if $d_0\neq d_1$, the $I$-function takes value in $K3$, then $H=h$.
The relation between the $I$-function and the $J$-function is the following:
\[
J_{((\on{Bl} \mathbb P^3, \on{K3}))}(I_0/I_1,z)=I_{((\on{Bl} \mathbb P^3, \on{K3}))}(z)/I_1,
\]
where $\frac{1}{I_1}=\frac{1}{1-(-I_1+1)}=\sum_{k\geq 0}(-I_1+1)^k$.

Now we look at the mirror map in $K3\subset \on{Bl} \mathbb P^3$. This is the mirror map of $(Y,D_0)$, where $Y:=\mathbb P(\mathcal O_{K3}\oplus N_{K3/\on{Bl} \mathbb P^3})$. The normal bundle $N_{K3/\on{Bl} \mathbb P^3}=-N_{K3/Q_4}=\mathcal O_{\mathbb P^3}(-1)|_{K3}$.

The $I$-function of $(Y,D_0)$ is
\begin{equation}
    \begin{split}
        zq_1^{H/z}q_0^{h/z}\sum_{d_1,d_0\geq 0}\left(\frac{\prod_{k=1}^{4d_1}(4H+kz)\prod_{k=\infty}^{0}(h-H+kz)}{\prod_{k=1}^{d_1}(H+kz)^4\prod_{k=\infty}^{d_0-d_1}(h-H+kz)}\right)\left(\prod_{\text{if } d_0>0}\frac{1}{h+d_0z}\right)[1]_{-d_0}q_1^{d_1}q_0^{d_0}.
    \end{split}
\end{equation}
Here we use the $I$-function of $D$ instead of the $J$-function of $D$, where the $I$-function of $D$ is
\begin{equation}
    \begin{split}
        zy_1^{H/z}\sum_{d_1\geq 0}\left(\frac{\prod_{k=1}^{4d_1}(4H+kz)}{\prod_{k=1}^{d_1}(H+kz)^4}\right)y_1^{d_1}=I_1z+I_0+O(z^{-1}),
    \end{split}
\end{equation}
which has non-trivial coefficient of $z^1$. The relation between the $I$-function and the $J$-function is
\[
J_D(\tau(y),z)=I_D(y,z)/I_1,
\]
where the mirror map $\tau(y)=I_0/I_1$.
What we discussed in Section \ref{sec:rel-mirror-map} is when $\tau\in H^*(D)$, and no $az$ terms. That is why the $I$-function (\ref{I-func-bl-p-3}) looks slightly different from the $I$-function in Section \ref{sec:rel-mirror-map}.

\section{Quantum periods and relative mirror maps}

Quantum periods are defined for Fano varieties as generating functions of one-point genus zero Gromov--Witten invariants \cite{CCGGK}. Now we consider quantum periods for varieties that are not necessarily Fano and we define it in a similar way.

\begin{definition}\label{def-quan-period}
The quantum period of a smooth projective variety $X$ is
\[
G_X(t)=1+\sum_{\beta\in \NE(X),D\cdot\beta=d\geq 2}\langle [\on{pt}]\psi^{d-2}\rangle_{0,1,\beta}^X t^d,
\]
where we set $q^\beta=t^d$.
\end{definition}

\begin{definition}
The regularized quantum period of a smooth projective variety $X$ is
\[
\widehat G_X(t)=1+\sum_{\beta\in \NE(X),D\cdot\beta=d\geq 2}\langle [\on{pt}]\psi^{d-2}\rangle_{0,1,\beta}^X d! t^d.
\]
\end{definition}

For mirror symmetry, we usually consider varieties whose anticanonical linear systems are non-empty. Here we assume that $X$ has a smooth anticanonical divisor $D$. We then consider the log Calabi--Yau pair $(X,D)$, where $D$ is not necessarily nef. Following \cite{You22}*{Definition 1.1}, the theta functions for the pair $(X,D)$ are generating functions of two-point relative invariants:
\[
\vartheta_p=x^{p}+\sum_{n=1}^{\infty}\sum_{\beta:D\cdot\beta=n+p}n\left\langle [1]_p,[\on{pt}]_{n}\right\rangle_{0,2,\beta}^{(X,D)}t^{n+p}x^{-n}, \quad \text{for } p\geq 0.
\] 
In the intrinsic mirror construction of Gross--Siebert \cite{GS19}, the proper potential is
\[
W:=\vartheta_1.
\]
The classical period of $W$ is
\begin{align*}
\pi_W(t):=&\sum_{l\geq 0}[W^l]_{\vartheta_0}\\
=& 1+\sum_{l\geq 2}\sum_{\beta\in\NE(X),l=D\cdot\beta}\langle [1]_1,\ldots,[1]_1,[\on{pt}]_0\bar{\psi}^{l-2}\rangle_{0,l+1,\beta}^{(X,D)}t^\beta x^l. 
\end{align*}
When $-K_X$ is nef, these invariants can be computed using the relative mirror theorem \cite{FTY}. We see that classical periods equal the regularized quantum periods:
\[
1+\sum_{l\geq 2}\sum_{\beta\in\NE(X),l=D\cdot\beta}\langle [\on{pt}]\psi^{l-2}\rangle_{0,1,\beta}^X y^\beta l!,
\]
where the equality holds up to the relative mirror map. 

Indeed, we have a more refined result: the effect of the relative mirror map on the classical period is trivial.  More precisely,
\begin{proposition}\label{prop-period-mirror-map}
Given a smooth log Calabi--Yau pair $(X,D)$. We have the following identity
\[
\langle [1]_1,\ldots, [1]_1, [1]_{-k_1},\ldots,[1]_{-k_{l^\prime}},[\on{pt}]_0\bar{\psi}^{l-2}\rangle_{0,l+l^\prime+1,\beta}^{(X,D)}=0,
\]
where
\[
l^\prime>0, D\cdot\beta=l-\sum_{i=1}^{l^\prime}k_i>0 \text{ and }k_i\geq 2 \text{ for all } i\in \{1,\ldots,l^\prime\}.
\]
\end{proposition}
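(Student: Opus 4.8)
The plan is to show that each invariant in the statement vanishes by a dimension count on the appropriate moduli space, exploiting the presence of many $[1]_1$ insertions together with at least one $[1]_{-k_i}$ with $k_i\ge 2$. Since there is more than one marking with negative contact order ($l'>0$ and we also have the markings of positive contact order), I would not work directly with $\mathfrak c_\Gamma(X/D)$ but instead invoke the degeneration formula: degenerate $X$ to the normal cone of $D$ and express the relative invariant of $(X,D)$ as a sum over bipartite graphs of products of (rubber) invariants of $Y=\mathbb P(\mathcal O_D\oplus N_{D/X})$ and relative invariants of $(X,D)$, exactly as recalled after Definition~\ref{rel-inv-neg}. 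The distinguished marking carries $[\mathrm{pt}]_0\bar\psi^{l-2}$, which is an interior (contact order $0$) insertion, so it is distributed to the $X$-side; the markings $[1]_1$ are positive-contact markings and the markings $[1]_{-k_i}$ are negative-contact markings, all of which get distributed to the $Y$-side.

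The key step is then the following bound on the $Y$-side (rubber) contributions. Each vertex on the $Y$-side carries insertions that are either $[1]_1$ (from the positive contact markings, now becoming contact-$1$ markings with $D_0$), $[1]_{-k_i}$ (from the negative ones), or cohomology classes $\check\eta_e$ coming from the nodes; all insertions on the $Y$-side are restricted from $H^*(D)$ and in fact are (up to the node classes) just $1\in H^*(D)$. Because the insertions $[1]_1$ and $[1]_{-k_i}$ carry no $\psi$-powers and no nontrivial cohomology weight, the virtual dimension of the rubber moduli space forces the curve classes on the $Y$-side to be fiber classes (multiples of a line in a fiber of $Y\to D$), by the same mechanism used in the proof of the lemma preceding Theorem~\ref{thm-rel-I-cy}: a non-fiber class on the $Y$-side produces an $h+D$ factor that vanishes in $H^*(D_0)$, or forces an excess power of $z$. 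Hence all of $\beta$ is pushed to the $X$-side, and on the $X$-side we are left with a single $(X,D)$-invariant whose insertions are the distinguished $[\mathrm{pt}]_0\bar\psi^{l-2}$ together with node classes of contact order equal to the $k_i$ and to $1$. One then checks, using $D=-K_X$ (so $(X,D)$ is log Calabi--Yau) and the dimension formula in Proposition~\ref{rel-inv-neg}, that the $\bar\psi^{l-2}$ power is one too many: the combinatorial constraint $D\cdot\beta=l-\sum k_i>0$ with all $k_i\ge 2$ means the number of positive contact markings $l$ exceeds what the dimension allows once the $[1]_{-k_i}$ have been accounted for, so the integrand has degree strictly greater than the virtual dimension, giving $0$.

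More concretely, I expect the cleanest route is: (i) set up the degeneration formula and reduce to the statement that every bipartite graph contribution with $\beta$ not entirely on the $X$-side vanishes — this is the fiber-class argument above, essentially identical to the one already carried out in the excerpt; (ii) reduce to a pure $(X,D)$-invariant $\langle [1]_1,\dots,[1]_1,[1]_{-k_1},\dots,[1]_{-k_{l'}},[\mathrm{pt}]_0\bar\psi^{l-2}\rangle^{(X,D)}_{0,\bullet,\beta}$ where now the $k_i$ may have been redistributed by the graph but still satisfy $\sum k_i = l - D\cdot\beta$ with $D\cdot\beta>0$; (iii) apply the topological recursion relation / string-type relations for relative invariants from \cite{FWY} to absorb the $[1]_1$ insertions, or directly compare against the dimension constraint. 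The upshot of any of these is that the $\psi$-degree $l-2$ on the point class cannot be supported: roughly, each $[1]_1$ wants to ``use up'' a unit of dimension but contributes none, and with $l$ of them against a total $D\cdot\beta = l-\sum k_i$ that is strictly less than $l$, the count fails.

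The main obstacle, and the step I would spend the most care on, is step (ii)–(iii): tracking precisely how the contact orders and the single $\bar\psi^{l-2}$ interact with the dimension formula $d=\dim_{\mathbb C} X - 3 + \int_\beta c_1(T_X(-\log D)) + m - m_-$ once $D$ is not nef, since then $\int_\beta c_1(T_X(-\log D)) = \int_\beta c_1(T_X) - D\cdot\beta = 0$ (log Calabi--Yau) and the bookkeeping of $m$ versus $m_-$ becomes delicate. One must verify that after all node insertions are fixed (their cohomological degrees summing correctly via Poincaré duality on $D$), the remaining freedom is exactly $l-3$-dimensional on $\overline M_{0,m}$-type factors, which is strictly less than the $l-2$ demanded by $\bar\psi^{l-2}$ — this strict inequality, powered by the hypothesis $k_i\ge 2$ (so that $\sum k_i \ge 2l' \ge 2 > 0$ strictly reduces $D\cdot\beta$ below $l$), is what makes the invariant vanish. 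The rest of the argument is routine once this counting is pinned down.
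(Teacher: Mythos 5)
There is a genuine gap: the vanishing mechanism you propose cannot work. Your steps (ii)--(iii) rest on the claim that the $\bar\psi^{l-2}$ insertion produces a strict excess over the virtual dimension, but for a log Calabi--Yau pair the dimension constraint is \emph{exactly} saturated: the dimension formula gives $d=\dim_{\mathbb C}X-3+\int_\beta c_1(T_X(-\log D))+m-m_-=\dim_{\mathbb C}X-3+(l+l'+1)-l'=\dim_{\mathbb C}X+l-2$, which equals the degree of $[\on{pt}]\bar\psi^{l-2}$ (the $[1]_1$ and $[1]_{-k_i}$ insertions contribute nothing). Indeed, the same counting with $l'=0$ gives the invariants $\langle [1]_1,\ldots,[1]_1,[\on{pt}]_0\bar\psi^{l-2}\rangle_{0,l+1,\beta}^{(X,D)}$, which are the (generally nonzero) classical period coefficients; so no dimension count alone, however the contact orders are bookkept, can yield the vanishing --- the hypotheses $l'>0$ and $k_i\ge 2$ must enter structurally. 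Your step (i) is also not justified as stated: the fiber-class argument you import is the $I$-function coefficient analysis for the generating functions $\sum_l\frac{1}{l!}\langle[\eta_i^\vee]_{k_i},\tau_D,\ldots,\tau_D\rangle^{(Y,D_0)}$, whereas the rubber factors occurring in the definition of the invariant at hand carry the insertions $[1]_1$, $[1]_{-k_i}$ and node classes, not the mirror-map insertions $\tau_D$, and the positive-contact markings need not all be distributed to the $Y$-side.

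The paper's actual argument works directly with the definition of negative-contact invariants rather than a dimension count. Each negative contact marking lies in a rubber moduli over $D$, and the point-constrained marking can be taken away from $D$. Pushing the rubber virtual class forward to the moduli of stable maps to $D$, all the $[1]_{k_i}$ and $[1]_1$ insertions become identity classes; since the class $\mathfrak c_\Gamma$ restricted to a rubber with $a$ negative contact markings has degree $a-1$, applying the string equation $(a-1)$ times forces the curve class in $D$ to vanish (only fiber classes survive) and bounds the rubber to $a+2$ markings. The hypothesis $k_i\ge 2$ then bites: balancing contact orders on a fiber class, $D_0\cdot\beta=D_\infty\cdot\beta$, with negative contact orders at least $2$ against contact-order-$1$ markings forces $b>a$ positive-contact markings in that rubber, exceeding the allowed number of markings, so every rubber factor vanishes. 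If you want to salvage your write-up, this string-equation-plus-marking-count step is what must replace your dimension argument.
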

\begin{proof}
We use the definition of relative Gromov--Witten invariants with negative contact orders in \cite{FWY}. By definition, every negative contact marking must be in a rubber moduli $\bM^\sim_{\Gamma_i^0}(D)$ labeled by $\Gamma_i^0$. As the last marking has a point constraint, we can assume that the last marking maps to a point that is away from $D$. 

By the definition in \cite{FWY}, each rubber moduli $\bM^\sim_{\Gamma_i^0}(D)$ has at least one negative contact marking distributed to it.
We consider the rubber moduli labeled by $\Gamma_i^0$. Without loss of generality, we can assume that there are $a$ negative contact orders: $[1]_{-k_1},\ldots,[1]_{-k_a}$ and there are $b$ positive contact orders with insertions: $[1]_1$.  The rubber invariant is
\[
\langle \eta_i, [1]_{k_1},\ldots,[1]_{k_a}| \quad | [1]_1,\ldots,[1]_1\rangle_{\Gamma^0_i}^{\sim, \mathfrak c_{\Gamma}}. 
\]
 We can push the virtual cycle of this rubber moduli forward to the moduli space of stable maps to $D$. Then the insertions $[1]_{k_i}$ become the identity class. By definition, when there are $a$ negative contact orders distributed to this rubber moduli, the class $\mathfrak c_\Gamma$ is of degree $a-1$. But at the same time, we will have $a$ identity classes. Applying the string equation $(a-1)$-times, we see that the invariants are zero unless the degree of the curve class in $D$ is zero and there are $a+2$ markings. As there are only fiber curve classes, we at least need $b>a$ in order to make sure the degrees on both sides of the rubber are the same: $D_0\cdot\beta=D_\infty\cdot\beta$. This makes the number of markings larger than what we required. Therefore, the invariants are zero.
\end{proof}

\begin{remark}
This generalizes \cite{You22}*{Proposition 4.8}. Furthermore, we do not need to assume that $D$ is nef. But when $D$ is not nef, this is not the only mirror map that we need to consider, as we discussed in Section \ref{sec:rel-mirror}.
\end{remark}

When $-K_X$ is nef, Proposition \ref{prop-period-mirror-map} and the relative mirror theorem of \cite{FTY} implies that the classical period defined by the proper potential $W$ coincides with the regularized quantum period of $X$. This is Theorem \ref{thm-class-quantum} which we will state and prove again in Section \ref{sec:quan-period-degen} using a degeneration analysis. 

\section{Quantum periods from degeneration}\label{sec:quan-period-degen}

Now we compute the classical period of the proper potential $W:=\vartheta_1$ using the degeneration formula. 

\subsection{The nef anticanonical case}

When $D$ is nef, Proposition \ref{prop-period-mirror-map} and the relative mirror theorem together imply that the classical periods coincide with the regularized quantum periods. We can also prove it using the degeneration formula as follows.
\begin{thm}\label{thm-class-quantum}
Let $X$ be a smooth projective variety and $D$ be a smooth anticanonical divisor of $X$. Suppose $D$ is nef, and the classical period defined by the proper potential $W$ coincides with the regularized quantum period of $X$.
\end{thm}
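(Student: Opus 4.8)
The plan is to prove Theorem \ref{thm-class-quantum} by a direct degeneration analysis, applying the degeneration formula to the degeneration of $X$ to the normal cone of $D$. Recall that the classical period is
\[
\pi_W(t)=1+\sum_{l\geq 2}\sum_{\beta\in\NE(X),\,l=D\cdot\beta}\langle [1]_1,\ldots,[1]_1,[\on{pt}]_0\bar{\psi}^{l-2}\rangle_{0,l+1,\beta}^{(X,D)}t^\beta x^l,
\]
so it suffices to identify each relative invariant $\langle [1]_1,\ldots,[1]_1,[\on{pt}]_0\bar{\psi}^{l-2}\rangle_{0,l+1,\beta}^{(X,D)}$ with the absolute invariant $\langle [\on{pt}]\psi^{l-2}\rangle_{0,1,\beta}^X$ appearing in $\widehat G_X(t)$, where $l=D\cdot\beta$. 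Note $l$ equals the number of $[1]_1$-insertions, so by the degeneration formula the point-insertion marking goes to the $X$-side and all $l$ of the contact markings get distributed across the two sides; since $D$ is anticanonical and each contact order is $1$, a class $\beta'$ on the $Y=\mathbb P(\mathcal O_D\oplus N_D)$-side with $D_0\cdot\beta'=k$ markings of contact order $1$ satisfies tight dimension constraints.

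The key steps, in order, are as follows. First, I would write out the degeneration formula for $\langle [1]_1,\ldots,[1]_1,[\on{pt}]_0\bar{\psi}^{l-2}\rangle_{0,l+1,\beta}^{(X,D)}$, organizing the sum over splittings of $\beta$, the bipartite graph, and the cohomology-weighted partition along $D$. Second, I would use the nef hypothesis on $D=-K_X$: for any effective class $\beta'$ on the $Y$-side one has $D_0\cdot\beta'\geq 0$, and a rubber/relative invariant of $(Y,D_0)$ with only $[1]$-insertions of contact order $1$ and no point constraint is forced by the string equation and the virtual dimension to vanish unless the curve class is a multiple of a fiber — exactly as in the proof of the Lemma preceding \eqref{rel-I-func-tau} and in Proposition \ref{prop-period-mirror-map}. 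Since $N_D$ has degree $-D\cdot\beta<0$ on a fiber in the relevant direction while $D_0$ has degree $+$, the nef condition pins these fiber contributions down; one checks multiple covers of a fiber contribute trivially here. Third, having ruled out nontrivial $Y$-side curve classes, the entire curve class $\beta$ is carried by the $X$-side, the graph has one $X$-vertex, and the relative invariant reduces to a single relative invariant $\langle [1]_1,\ldots,[1]_1,[\on{pt}]_0\bar{\psi}^{l-2}\rangle_{0,l+1,\beta}^{(X,D)}$ with maximal tangency spread over $l=D\cdot\beta$ simple contact points. Fourth, I would invoke the standard comparison (rigidification/degeneration of one tangency point at a time, or equivalently the result underlying \cite{FTY} and \cite{TY20b}) that turns $l$ simple-tangency markings with identity insertions into the factor $l!$ times the absolute invariant $\langle [\on{pt}]\psi^{l-2}\rangle_{0,1,\beta}^X$; the power $\bar\psi^{l-2}$ on the point marking matches the $\psi^{d-2}$ on the absolute side by the dimension count $\dim_{\mathbb C}X - 3 + D\cdot\beta + (l+1) - 1 = l-2$ when $\beta$ contributes (using $D\cdot\beta=l$ and $\int_\beta c_1(T_X(-\log D))=0$). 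Assembling these identifications term by term in $t^\beta x^l$ gives $\pi_W(t)=\widehat G_X(t)$.

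The main obstacle I anticipate is controlling the $Y$-side contributions: one must show that every connected bipartite graph with a nontrivial curve class on $Y$ contributes zero. This requires a careful bookkeeping of the virtual dimension of the rubber moduli of $D$, the degrees of $\mathfrak c_\Gamma$, and the multiple-cover contributions of fibers of $Y\to D$, together with the observation that identity insertions of positive contact order push forward to identity classes on $D$ so the string equation repeatedly applies — precisely the mechanism in the Lemma before \eqref{rel-I-func-tau} and in Proposition \ref{prop-period-mirror-map}, but now I need it in the nef case without the Calabi--Yau-of-$D$ shortcut, since here $D$ is only assumed nef (hence $N_D$ effective but possibly with $c_1(N_D)\cdot\beta\geq 0$, $=0$ forcing the relevant vanishing). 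Once the $Y$-side is dispatched, the remaining identifications are the routine tangency-vs-point comparison already used when $-K_X$ is nef, so the argument closes.
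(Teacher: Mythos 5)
Your proposal has a genuine structural gap: the identity that actually needs proving is invoked rather than proved. You set up the degeneration formula with the relative invariant $\langle [1]_1,\ldots,[1]_1,[\on{pt}]_0\bar{\psi}^{l-2}\rangle_{0,l+1,\beta}^{(X,D)}$ as the \emph{input}, but the degeneration of $X$ to the normal cone of $D$ expresses an \emph{absolute} invariant of $X$ as a sum of products of relative invariants of $(X,D)$ and $(Y,D_0)$; the relative invariant above is an output of that formula, not something one degenerates. Indeed, after your step three the computation has ``reduced'' the relative invariant to itself, and the entire identification with $(D\cdot\beta)!\,\langle[\on{pt}]\psi^{l-2}\rangle_{0,1,\beta}^X$ is deferred to step four, where you ``invoke the standard comparison (\ldots the result underlying \cite{FTY} and \cite{TY20b})''. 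That comparison \emph{is} the theorem: the mirror-theorem route of \cite{FTY}/\cite{TY20b} only yields the statement up to the relative mirror map (this is exactly why the paper needs Proposition \ref{prop-period-mirror-map}, which you cite only for $Y$-side vanishing, a different purpose), so as written the argument is circular, or at best silently re-imports the mirror-map issue without resolving it.

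The paper's proof runs the degeneration in the other direction: it applies the degeneration formula to the one-point absolute invariant $\langle[\on{pt}]\psi^{D\cdot\beta-2}\rangle_{0,1,\beta}^{X}$, places the point marking away from $D$, and then the decisive step is a virtual dimension count at the $X$-side vertex $v_0$ carrying that marking, which (using $D$ anticanonical and nef, so $|\eta_{v_0}|\leq D\cdot\beta_{v_0}\leq D\cdot\beta$) forces all contact orders at $v_0$ to equal $1$ with identity insertions and $|\eta_{v_0}|=D\cdot\beta$; the residual $Y$-side factors $\langle[\on{pt}]_1\,|\,\rangle_{0,1,\beta_i}^{(Y,D_0)}$ are then shown, by an explicit $I$-function computation for $(Y,D_0)$ (trivial mirror map since $D$ is nef), to vanish unless $\beta_i$ is a fiber class, where they equal $1$. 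Your proposal contains neither the dimension count at $v_0$ nor this computation — the $Y$-side is handled only by the assertion that ``one checks multiple covers of a fiber contribute trivially,'' and your dimension bookkeeping in step four ($\dim_{\mathbb C}X-3+D\cdot\beta+(l+1)-1=l-2$) is not the correct count, since $\int_\beta c_1(T_X(-\log D))=0$ for anticanonical $D$, not $D\cdot\beta$. To repair the argument, reverse the direction of the degeneration and supply these two ingredients; as it stands the key equality is assumed.
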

\begin{proof}
We prove it using the degeneration to the normal cone of $D$ in $X$ and applying the degeneration formula to the one-point invariant 
\begin{align}\label{inv-1-pt}
\langle [\on{pt}]\psi^{D\cdot\beta-2}\rangle_{0,1,\beta}^X. 
\end{align}
The unique marking $p$ has a point constraint, so we can require it to map to a point that is not in $D$. Then under the degeneration formula, the invariant (\ref{inv-1-pt}) becomes
\begin{align*}
\sum_{\mathfrak G} \frac{\prod_{e\in E(\mathfrak G)} d_e}{|\Aut(\mathfrak G)|}\langle  [\on{pt}]\bar{\psi}^{D\cdot\beta-2}| \eta\rangle^{\bullet, (X,D)}\langle \eta^\vee |\rangle^{\bullet, (Y,D_0)},
\end{align*}
where the sum is over all splittings of $\beta$ and all cohomology weighted partitions $\eta$. 

Let $v_0$ be the vertex over $X$-side where the unique marking $p$ is attached to. Let $\overline{M}_{v_0}$ be the moduli space of relative stable maps associated with the vertex $v_0$. The virtual dimension constraint is
\begin{align*}
\dim([\overline{M}_{v_0}]^{\on{vir}})&=(\dim_{\mathbb C}X-3)+(-K_X-D)\cdot\beta+1+|\eta_{v_0}|\\
&=\frac{1}{2}\deg([\on{pt}])+\sum_{i}\frac{1}{2}\deg(\eta^i_{v_0})+D\cdot\beta-2,
\end{align*}
where
\[
\eta_{v_0}=\{(\eta_{v_0}^1,k_1),\ldots,(\eta_{v_0}^{|\eta_{v_0}|},k_{|\eta_{v_0}|})\}
\] 
is the cohomology weighted partition that corresponds to the edges adjacent to the vertex $v_0$. Since $D$ is anticanonical, the virtual dimension constraint simplifies to
\[
|\eta_{v_0}|=\sum_{i}\frac{1}{2}\deg(\eta^i_{v_0})+D\cdot\beta.
\]
Because we assume that $D$ is nef, we must have 
\[
|\eta_{v_0}|\leq D\cdot\beta_{v_0}\leq D\cdot\beta,
\]
where the first equality holds only when $k_i=1$ for all $i\in \{1,\ldots,|\eta_{v_0}|\}$.

In order to satisfy the virtual dimension constraint, we need to have
\[
|\eta_{v_0}|= D\cdot\beta, \quad k_i=1 \text{ for all }i\in \{1,\ldots,|\eta_{v_0}|\}
\]
and 
\[
\eta_{v_0}^i=[1]\in H^*(D) \text{ for all }i\in \{1,\ldots,|\eta_{v_0}|\}.
\]
If $D$ is ample, then $\beta_{v_0}=\beta$. The degeneration formula becomes
\[
\langle [\on{pt}]{\psi}^{D\cdot\beta-2}\rangle_{0,1,\beta}^X =\frac{1}{(D\cdot\beta)!}\langle [1]_1,\ldots,[1]_1,[\on{pt}]_0\bar{\psi}^{D\cdot\beta-2}\rangle_{0,D\cdot\beta+1,\beta}^{(X,D)}.
\]

If $D$ is nef but not necessarily ample, we still have a factor of $\frac{1}{(D\cdot\beta)!}\langle [1]_1,\ldots,[1]_1,[\on{pt}]_0\bar{\psi}^{D\cdot\beta-2}\rangle_{0,D\cdot\beta+1,\beta}^{(X,D)}$. On the $Y$-side we also have the invariants
\[
\langle [\on{pt}]_1 | \rangle_{0,1,\beta_i}^{(Y,D_0)},
\]
where $D_0\cdot\beta_i=1$. The invariants can be computed from the $I$-function
\begin{align*}
\sum_{\beta\in \on{NE}(D), n\geq 0}y^{\beta}y_0^{n}J_{D,\beta}(\tau_{0,2},z)\frac{\prod_{a\leq 0}(h+{D}+az)}{\prod_{a\leq n+D\cdot\beta}(h+{D}+az)}\left(\prod_{\text{if }n>0}\frac{1}{h+z}\right)[1]_{-n}.
\end{align*}
When $D$ is nef, the mirror map is trivial. The invariants are in the $z^{-1}-$coefficient of the $J$-function that takes value in  $[1]_{-1}$. 

We suppose that $\beta_i$ is not a class of a fiber. To extract non-trivial $z^{-1}$-coefficient of the $I$-function we need $n+D\cdot\beta\leq 0$. Otherwise, the factor 
\[
\frac{\prod_{a\leq 0}(h+{D}+az)}{\prod_{a\leq n+D\cdot\beta}(h+{D}+az)}=\frac{1}{\prod_{0<a\leq n+D\cdot\beta}(h+{D}+az)}.
\]
And we have
\[
\frac{1}{h+D+az}=\frac{1}{az}\frac{1}{1+(h+D)/(az)}=\frac{1}{az}\sum_{k\geq 0}\left(\frac{h+D}{az}\right)^k.
\]
This gives extra negative powers of $z$ and does not contribute to $z^{-1}$-coefficient. On the other hand, $n+D\cdot\beta\leq 0$ is not possible because $D$ is nef and we want $n=1$ here. Therefore, the corresponding coefficient of the $I$-function is zero unless $\beta_i$ is a class of a fiber. Hence these invariants are zero  unless $\beta_i$ is a class of a fiber (in that case the invariants are equal to $1$). 

Note that there are no other vertices over the $X$-side because $D$ is nef and the sum of the degree of the edges adjacent to $v_0$ is already $D\cdot\beta$. Therefore, there are no other degeneration contributions. Again, the degeneration formula becomes
\[
\langle [\on{pt}]{\psi}^{D\cdot\beta-2}\rangle_{0,1,\beta}^X =\frac{1}{(D\cdot\beta)!}\langle [1]_1,\ldots,[1]_1,[\on{pt}]_0\bar{\psi}^{D\cdot\beta-2}\rangle_{0,D\cdot\beta+1,\beta}^{(X,D)}.
\]

\end{proof}

\begin{remark}
The degeneration argument in the proof of Theorem \ref{thm-class-quantum} is consistent with the computation using the relative mirror theorem when $-K_X$ is nef. We will see that it is more complicated when $-K_X$ is not nef.
\end{remark}

\subsection{The non-Fano case}

Now we consider the case where $D$ is not nef. Then the proof in Theorem \ref{thm-class-quantum} does not work. In the degeneration formula, there can be extra terms. For example, there are rational tails:
\[
\langle [\eta_i]_{k_i} | \rangle_{0,1,\beta_i}^{(Y,D_0)}.
\] 
In general, there are also other vertices on the $X$-side and there are several relative markings.

In the proof of Theorem \ref{thm-class-quantum}, these terms are trivial when $D$ is nef. When $D$ is not nef, they are not necessarily trivial. These terms exactly contribute to the mirror maps in $D$ that appear in the relative mirror theorem that we discussed in Section \ref{sec:rel-mirror}.

\begin{theorem}\label{thm-classical-quantum}
Let $X$ be a smooth projective variety and $D$ be a smooth anticanonical divisor of $X$. The classical period defined by the proper potential $W$ coincides with the regularized quatum period of $X$ after imposing the mirror map in $D$. More precisely, we have
\begin{align}\label{iden-classical-quantum}
\pi_W(t):=&\sum_{n\geq 0}[W^n]_{\vartheta_0}\\
=&1+\sum_{d\geq 2}\sum_{\beta\in\NE(X),D\cdot\beta=d} (D\cdot\beta)!\left[\sum_{l\geq 0,\beta^\prime\in \NE(X)}\frac{1}{l!}\langle [\on{pt}]\psi^{d-2},\tau_D,\ldots,\tau_D\rangle_{0,l+1,\beta^\prime}^X q^{\beta^\prime}\right]_{q^\beta} q^\beta, 
\end{align}
where $\tau_D$ is the mirror map in $D$ and $q^\beta=t^\beta x^{-D\cdot\beta}$.
\end{theorem}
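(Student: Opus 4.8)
The plan is to run the degeneration-to-the-normal-cone argument from the proof of Theorem~\ref{thm-class-quantum}, but now without the nef hypothesis, and to identify precisely which extra contributions survive. As before, I would apply the degeneration formula to the one-point invariant $\langle [\on{pt}]\psi^{D\cdot\beta-2}\rangle_{0,1,\beta}^X$, placing the point-constrained marking on the $X$-side. Let $v_0$ be the vertex on the $X$-side carrying that marking. The virtual dimension constraint at $v_0$ is, as before, $|\eta_{v_0}|=\sum_i \tfrac12\deg(\eta_{v_0}^i)+D\cdot\beta_{v_0}$; the novelty is that $D\cdot\beta_{v_0}$ need no longer be the whole of $D\cdot\beta$ and curve classes on the $Y$-side can contribute. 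So the strategy is: (i) show that the $X$-side still produces the factor $\tfrac{1}{(D\cdot\beta_{v_0})!}\langle [1]_1,\ldots,[1]_1,[\on{pt}]_0\bar\psi^{D\cdot\beta_{v_0}-2}\rangle_{0,D\cdot\beta_{v_0}+1,\beta_{v_0}}^{(X,D)}$ at the distinguished vertex — this is the part that will sum up, over all $\beta_{v_0}$, into $[W^n]_{\vartheta_0}$ on the left-hand side of \eqref{iden-classical-quantum}; and (ii) show that the remaining $Y$-side data organizes exactly into the mirror map $\tau_D$ inserted into $\langle [\on{pt}]\psi^{d-2},\tau_D,\ldots,\tau_D\rangle_{0,l+1,\beta^\prime}^X$.

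For step (i), I would argue as in Theorem~\ref{thm-class-quantum} that the virtual dimension count forces all edges at $v_0$ to have contact order $1$ and insertion $[1]\in H^*(D)$, whenever the $X$-side vertex is the one carrying the point; but now I must also allow further vertices on the $X$-side with several incident edges, and rational tails $\langle [\eta_i]_{k_i}|\rangle_{0,1,\beta_i}^{(Y,D_0)}$ on the $Y$-side. The key computation — and I expect this to be the main obstacle — is to show that any $X$-side vertex other than $v_0$, together with the $Y$-side rubber/tail data attached to it, reassembles precisely into a factor of the form $\langle\cdots,\tau_D,\ldots,\tau_D\rangle^X$. Concretely, a non-distinguished $X$-side vertex of class $\beta''$ with its adjacent edges carries insertions $\iota_*\delta$ that, by the Lemma proved above (on the $z^{-1}$-coefficient of the $I$-function of $(Y,D_0)$ being supported on fiber classes), can only be non-trivially decorated by the coefficients $\tau_{D,\delta,k,\beta}$; matching these against the explicit form \eqref{tau-D} of $\tau_D$ and the $J$-function with parameter $\tau_D$ in \eqref{J-func-mirror-map} is exactly the content needed. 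Here I would lean heavily on the analysis already carried out in Section~\ref{sec:rel-mirror}, in particular the identification of the mirror map for $(Y,D_0)$ with that of $N_D$ and the vanishing statements extracted from the hypergeometric factors when $n+D\cdot\beta\neq 0$.

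Once steps (i) and (ii) are in place, the degeneration formula expresses $\langle [\on{pt}]\psi^{d-2}\rangle_{0,1,\beta}^X$ as a sum over a distinguished $X$-side vertex of class $\beta_{v_0}$ contributing $\tfrac{1}{(D\cdot\beta_{v_0})!}$ times a $(D\cdot\beta_{v_0}+1)$-pointed relative invariant, times a product of $\tau_D$-type factors accounting for the classes $\beta-\beta_{v_0}$. Summing against $t^\beta x^{-D\cdot\beta}$ and using $q^\beta=t^\beta x^{-D\cdot\beta}$, I then compare with the definition $\pi_W(t)=\sum_n[W^n]_{\vartheta_0}$: the left-hand side of \eqref{iden-classical-quantum} is, by the formula for $\pi_W$ recalled just before the theorem, the generating series of $\langle [1]_1,\ldots,[1]_1,[\on{pt}]_0\bar\psi^{l-2}\rangle_{0,l+1,\beta}^{(X,D)}$, which is exactly what the distinguished vertex produces. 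Solving the resulting identity for $\langle [1]_1,\ldots,[1]_1,[\on{pt}]_0\bar\psi^{d-2}\rangle$ in terms of $\langle [\on{pt}]\psi^{d-2}\rangle_{0,1,\beta}^X$ and the $\tau_D$-corrections yields \eqref{iden-classical-quantum}. The final bookkeeping step — tracking the combinatorial factors $(D\cdot\beta)!$ and the $\tfrac{1}{l!}$ from the unordered $\tau_D$-insertions, and confirming that the automorphism factors $\tfrac{\prod_e d_e}{|\Aut(\mathfrak G)|}$ in the degeneration formula match the symmetrizations on both sides — is routine but must be done carefully; I would model it on the corresponding bookkeeping in \cite{You22} and in the proof of Theorem~\ref{thm-class-quantum}.
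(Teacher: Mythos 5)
Your proposal degenerates the wrong series, and this creates a genuine gap at its central step. The paper's proof applies the degeneration formula not to the bare one-point invariant $\langle [\on{pt}]\psi^{d-2}\rangle_{0,1,\beta}^X$ but to the already $\tau_D$-corrected series $\sum_{l,\beta'}\frac{1}{l!}\langle [\on{pt}]\psi^{d-2},\tau_D,\ldots,\tau_D\rangle_{0,l+1,\beta'}^X$, i.e.\ to the right-hand side of (\ref{iden-classical-quantum}). The point is that the insertions $\tau_D$ are classes $\iota_*\delta$ supported on $D$, so they are distributed to the $Y$-side; each terminal $Y$-vertex then contributes $\sum_{l}\frac{1}{l!}\langle [\eta^\vee_i]_{k_i},\tau_D,\ldots,\tau_D\rangle_{0,l+1,\beta_i}^{(Y,D_0)}$, which is exactly a $z^{-1}$-coefficient of the $I$-function of $(Y,D_0)$ and hence vanishes unless $\beta_i$ is a fiber class; a pruning argument on the tree then kills every non-principal graph, leaving only $\frac{1}{(D\cdot\beta)!}\langle [1]_1,\ldots,[1]_1,[\on{pt}]_0\bar{\psi}^{D\cdot\beta-2}\rangle_{0,D\cdot\beta+1,\beta}^{(X,D)}$, which is the identity claimed. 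In other words, imposing the mirror map \emph{before} degenerating is what makes the extra contributions vanish.

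In your version you degenerate the bare invariant, in which there is only one marking, so no $\iota_*\delta$ insertions ever appear on the $Y$-side; consequently the vanishing lemma you invoke (which needs the $\tau_D$-insertions to turn the $Y$-side factors into $I$-function coefficients) does not apply, and the extra contributions you must control are genuinely nonzero when $D$ is not nef --- they are precisely the discrepancy the theorem quantifies. Your step (ii), ``show that the remaining $Y$-side data organizes exactly into the mirror map $\tau_D$,'' is therefore the whole difficulty and is not an instance of the Section \ref{sec:rel-mirror} analysis: the coefficients $\tau_{D,\delta,k,\beta}$ are defined from the $I$-function of $(Y,D_0)$, while the leftover degeneration data are Gromov--Witten/rubber invariants of $(Y,D_0\cup D_\infty)$ with no $\tau_D$ insertions; matching them amounts to an enumerative interpretation of the \emph{inverse} mirror map of $(Y,D_0)$ (equivalently $N_D$), which the paper explicitly treats as a consequence of the theorem (see the remark following it and the remark in the introduction), not as an available input. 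Also note a smaller slip: your reference to non-distinguished $X$-side vertices ``carrying insertions $\iota_*\delta$'' conflates the two setups --- in the degeneration of the bare invariant those vertices carry only relative insertions $\eta_i\in H^*(D)$ coming from the edges. The fix is simply to reverse the direction: start from the $\tau_D$-inserted absolute invariants, send the $\tau_D$-insertions to the $Y$-side, and use the $I$-function identification plus the dimension count at the distinguished vertex exactly as in the nef case.
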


\begin{proof}
Recall that the classical period equals
\begin{align*}
&\sum_{n\geq 0}[W^n]_{\vartheta_0}\\
=& 1+\sum_{n\geq 2}\sum_{\beta\in\NE(X): D\cdot\beta=n}\langle [1]_1,\ldots,[1]_1,[\on{pt}]_0\bar{\psi}^{n-2}\rangle_{0,n+1,\beta}^{(X,D)}t^\beta x^{-n}. 
\end{align*}

Similar to the proof of Theorem \ref{thm-class-quantum}, we consider the degeneration formula for the absolute invariants of $X$. The first marking has a point insertion, we can require the first marking $p_1$ is mapped to a point that is away from $D$. So we can require that the first marking $p_1$ is distributed to the $X$-side. Other markings will be distributed to the $Y$-side because the mirror map $\tau_D$ is of the form
\[
\tau_D=\sum_{k\geq 0}\tau_{D,\delta,k}(\iota_*\delta) z^k,
\]
for $\delta\in H^*(D)$ as we explained in (\ref{tau-D}).

Let $\mathfrak G$ be a bipartite graph coming from the degeneration formula. In genus zero, the graph is a tree. There is a vertex $v_0$ on the $X$-side that carries the marking $p_1$. The vertex $v_0$ is connected with the rest of the graph by edges $e_1,\ldots,e_{|E(v_0)|}$, where $E(v_0)$ is the set of edges adjacent to the vertex $v_0$ and $|E(v_0)|$ is the number of elements in $E(v_0)$.

Let $\mathfrak G_i^\prime$ be the subgraph of $\mathfrak G$ that connects with $v_0$ via the edge $e_i$. We write $C_{\mathfrak G_i^\prime}$ the degeneration contribution from $\mathfrak G_i^\prime$. The degeneration formula can be written as
\begin{align*}
&\sum_{l,\beta}\frac{1}{l!}\langle [\on{pt}]\psi^k,\tau_D,\ldots,\tau_D\rangle_{0,l+1,\beta}^X\\
=&\sum_{\mathfrak G}\langle [\on{pt}]_0\psi^k | C_{\mathfrak G_1^\prime}\eta_1,\ldots, C_{\mathfrak G_{|E(v_0)|}^\prime}\eta_{|E(v_0)|}  \rangle_{0,1+|E(v_0)|,\beta}^{(X,D)}, 
\end{align*}
where $\eta_i\in H^*(D)$.

In the degeneration formula we have the principal contribution when there is only one vertex on the $X$-side. This is
\begin{align*}
\sum_{\mathfrak G}& \frac{\prod_{e\in E(\mathfrak G)} d_e}{|\Aut(\mathfrak G)|}\langle [\on{pt}]_0\psi^{D\cdot\beta-2}, [\eta_1]_{k_1},\ldots,[\eta_l]_{k_|E|}\rangle_{0,1+|E|,\beta}^{(X,D)}\\
& \cdot\prod_{i=1}^{|E|}\left(\sum_{l\geq 0}\frac{1}{l!}\langle [\eta^\vee_i]_{k_i},\tau_D,\ldots,\tau_D\rangle_{0,l+1,\beta_i}^{(Y,D_0)}\right).
\end{align*}
The invariants $\left(\sum_{l\geq 0}\frac{1}{l!}\langle [\eta^\vee_i]_{k_i},\tau_D,\ldots,\tau_D\rangle_{0,l+1,\beta_i}^{(Y,D_0)}\right)$ form the $z^{-1}$-coefficient of the $I$-function
\begin{align*}
\sum_{\beta\in \on{NE}(D), n\geq 0}y^{\beta}y_0^{n}J_{D,\beta}(\tau_{0,2},z)\frac{\prod_{a\leq 0}(h+{D}+az)}{\prod_{a\leq n+D\cdot\beta}(h+{D}+az)}\left(\prod_{\text{if }n>0}\frac{1}{h+z}\right)[1]_{-n}
\end{align*}
that takes value in $[\eta_i]_{-k_i}$ for $k_i>0$. Note that we can not have $n+D\cdot \beta> 0$ because this will give an extra factor of $z^{-1}$. If $n+D\cdot\beta<0$, we have a factor of $h+D$ which is zero in $H^*(D_0)$. Therefore, we must have $n+D\cdot\beta=0$, then the factor $$\frac{\prod_{a\leq 0}(h+{D}+az)}{\prod_{a\leq n+D\cdot\beta}(h+{D}+az)}=1.$$ On the other hand, because $n>0$, we have a factor of $\frac{1}{h+z}$ which contribute to an extra factor of $z^{-1}$. So the $z^{-1}$-coefficient is zero unless $\beta_i$ is multiple of a class of a fiber. Then all the curve classes are distributed to the $X$-side, we have $\eta_i=[1]_1$ for all $i$ and $|E|=D\cdot\beta$ by the virtual dimension constraint. The principal contribution is
\[
\frac{1}{(D\cdot\beta)!}\langle [1]_1,\ldots,[1]_1,[\on{pt}]_0\bar{\psi}^{D\cdot\beta-2}\rangle_{0,D\cdot\beta+1,\beta}^{(X,D)}.
\]

Now we would like to show that other contributions are also zero because of the mirror map $\tau_D$. We consider the degeneration contribution $C_{\mathfrak G_i^\prime}$. Without loss of generality, we can consider $C_{\mathfrak G_1^\prime}$. 
The graph  $\mathfrak G_1^\prime$  is a tree because we consider genus zero invariants. At the end of the tree there are vertices with one incident edge. If such vertices are on the $Y$-side, then the degeneration contribution of this vertex is $$\left(\sum_{l\geq 0}\frac{1}{l!}\langle [\eta^\vee_i]_{k_i},\tau_D,\ldots,\tau_D\rangle_{0,l+1,\beta_i}^{(Y,D_0)}\right)$$ that forms the $[\eta]z^{-1}$-coefficient of the $I$-function which is zero as we just computed. If such vertices are on the $X$-side, then the degeneration contribution is $\langle [\eta^\prime]_k\rangle_{0,1,\beta^\prime}^{(X,D)}$. Because $D$ is anticanonical, $\frac{1}{2}\deg(\eta^\prime)=\dim_{\mathbb C}D-1$. Therefore, $\deg\left((\eta^\prime)^\vee\right)=2$.  

Now we may remove these vertices on the $X$-side with one incident edge and consider this half-edge as a leg that records the marking $\eta^\prime$. We denote this new graph as $\mathfrak G_1^{\prime\prime}$. If there are vertices of $\mathfrak G_1^{\prime\prime}$ on the $Y$-side that has one incident edge, then we can show that the degeneration contributions are zero by a similar argument. Then we can conclude that all these contributions $C_{\mathfrak G_i^\prime}$ are zero. 

Therefore, the only non-zero degeneration contribution is
\[
\frac{1}{(D\cdot\beta)!}\langle [1]_1,\ldots,[1]_1,[\on{pt}]_0\bar{\psi}^{D\cdot\beta-2}\rangle_{0,D\cdot\beta+1,\beta}^{(X,D)}.
\]
Hence we conclude the proof.

\end{proof}

\begin{remark}
The extra terms in the degeneration formula of one-point invariants $\langle [\on{pt}]_0\psi^{D\cdot\beta-2}\rangle_{0,\beta,1}^{X}$ are inverse of the mirror map in $D$ in the sense that these terms vanish when we apply the mirror map.
\end{remark}

We can also see it by computing the classical period from Theorem \ref{thm-rel-I} when $D$ is not nef. In this case, we have the mirror map in $D$ in the relative $I$-function. This matches with the degeneration argument above.

In \cite{BGL}, Berglund--Grafnitz--Lathwood studied mirror symmetry for non-Fano varieties in dimension $2$. In this case, we can show that the classical periods still coincide with the regularized quantum periods. Indeed, in dimensions $3$ and $2$, we have the following result.
\begin{theorem}
Let $X$ be a smooth projective variety of complex dimension $3$ or $2$, and let $D$ be a smooth anticanonical divisor of $X$. The classical period defined by the proper potential $W$ coincides with the regularized quantum period of $X$.
\end{theorem}
\begin{proof}
We just need to show that the mirror map in $D$ is trivial. As $D$ is a smooth anticanonical divisor of a threefold or a surface, we know that $D$ is a $K3$ surface or an elliptic curve. The genus zero Gromov--Witten invariants of $D$ are zero. 

The mirror map in $D$ is computed from the $I$-function of $(Y,D_0)$
\begin{align*}
\sum_{\beta\in \on{NE}(D), n\geq 0}y^{\beta}y_0^{n}J_{D,\beta}(\tau_{0,2},z)\frac{\prod_{a\leq 0}(h+{D}+az)}{\prod_{a\leq n+D\cdot\beta}(h+{D}+az)}\left(\prod_{\text{if }n>0}\frac{1}{h+z}\right)[1]_{-n}.
\end{align*}
As the genus zero invariants of $D$ are zero when $D$ is an elliptic curve or a $K3$ surface. 
 Therefore, the mirror map in $D$ is trivial. Hence, the classical period of the proper potential $W$ coincides with the regularized quantum period of $X$ when $X$ is of complex dimension $2$ and $3$.
\end{proof}

\section{The proper potential for non-Fano varieties}

Now we compute the proper potential for non-Fano varieties:

\[
W=\vartheta_1=x+\sum_{n=1}^{\infty}\sum_{\beta:D\cdot\beta=n+1}n\left\langle [1]_1,[\on{pt}]_{n}\right\rangle_{0,2,\beta}^{(X,D)}t^{n+1}x^{-n}.
\] 
Alternatively, it is the same as defining theta functions using three point invariants \cite{You24}:
\[
W=\vartheta_1=x+\sum_{n=1}^{\infty}\sum_{\beta:D\cdot\beta=n+1}\left\langle [1]_1,[1]_{\mathbf b},[\on{pt}]_{-\mathbf b+n}\right\rangle_{0,3,\beta}^{(X,D)}t^{n+1}x^{-n},
\] 
where the last two markings are mid-age markings. Similarly to the strategy in \cite{You22}, we use the relative mirror theorem to compute this function. As $D$ is not nef now, we need to use the relative mirror theorem in \cite{You25}. We recall that the relative $I$-function is
\begin{equation}\label{I-func-X-D-smooth}
\begin{split}
\sum_{\beta\in \on{NE}(X)} J_{X, \beta}(\tau_D(y,z),z) y^{\beta}\frac{\prod_{ a\leq D\cdot\beta}(D+az)}{\prod_{ a\leq 0}(D+az)}\cdot\prod_{\text{if } D\cdot\beta>0}\frac{1}{D+D\cdot\beta z}[1]_{ -D\cdot\beta}.
\end{split}
\end{equation}

The relative mirror map is given by the $z^{0}$-coefficient of the relative $I$-function. We computed in (\ref{mirror-map}) that the relative mirror map is
\begin{equation*}
\begin{split}
&\sum_{i=1}^{\mathrm r}p_i \log y_i\\
+&\sum_{\substack{\beta\in \NE(X) \\ d=D\cdot\beta\geq 2}}\left[\sum_{l\geq 0,\beta^\prime\in \NE(X)}\frac{1}{l!}\langle [\on{pt}]\psi^{d-2},\tau_D,\ldots,\tau_D\rangle_{0,1+l,\beta^\prime}^X y^{\beta^\prime}\right]_{y^\beta}y^\beta (d-1)![1]_{-d}.
\end{split}
\end{equation*}

\subsection{Some identities of relative Gromov--Witten invariants}\label{sec:iden-rel-GW}
To compute the theta function, we need to generalize some identities of relative Gromov--Witten invariants from \cite{You22} to the non-Fano case. We also do not require $D$ to be anticanonical in this section. 

We first prove some lemmas.

\begin{lemma}\label{lemma-several-neg-0}
Let $(X,D)$ be a smooth pair. Given a curve class $\beta$, let $k_i\in \mathbb Z_{>0}$ for $i\in \{1,\ldots, l\}$ such that
\[
D\cdot \beta=k_{l+1}-\sum_{i=1}^l k_i, \quad \text{and } k_{l+1}\geq 0.
\]
Then we have the following relation:
\begin{align}\label{identity-several-neg-0}
\langle  [1]_{-k_1},\cdots, [1]_{-k_l},  [\gamma]_{k_{l+1}}\rangle_{0,l+1,\beta}^{(X,D)}=
\left\{
\begin{array}{cc}
\langle [D]_0,\cdots, [D]_0,[\gamma]_{D\cdot \beta} \rangle_{0,l+1,\beta}^{(X,D)} & \text{if }D\cdot\beta \geq 0\\ 
0 & \text{if } D\cdot\beta< 0, 
\end{array}
\right.
\end{align}
where $\gamma \in H^*(D)$ when $k_{l+1}>0$; $\gamma\in H^*(X)$ when $k_{l+1}=0$ and the invariant is zero.
\end{lemma}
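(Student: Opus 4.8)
The plan is to analyze the relative invariant $\langle [1]_{-k_1},\ldots,[1]_{-k_l},[\gamma]_{k_{l+1}}\rangle_{0,l+1,\beta}^{(X,D)}$ via its definition in \cite{FWY} as a sum over admissible bipartite graphs $\mathcal B_\Gamma$, and to carry out a degeneration/rubber analysis similar in spirit to the proof of Proposition \ref{prop-period-mirror-map}. Each negative contact marking $[1]_{-k_i}$ must be distributed to some rubber moduli $\overline{M}^\sim_{\Gamma_j^0}(D)$ over the divisor $D$. I would first push the rubber virtual cycles forward to the moduli spaces of stable maps to $D$; the key bookkeeping point is the degree of the relative Gromov--Witten cycle $\mathfrak c_\Gamma(X/D)$, which drops by one for each negative contact marking in a rubber, while at the same time each insertion $[1]_{-k_i}$ becomes an identity class $[1]\in H^*(D)$ once pushed forward. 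Applying the string (dilaton/fundamental class) equation repeatedly then forces all the rubber curve classes to be fiber classes and all positive contact orders glued to the main component to be $1$, with the insertion on the main component twisted by $[D]$ via the pushforward $\iota_*[1]=[D]$. This is exactly the mechanism that converts $[1]_{-k_i}$ into $[D]_0$ and forces $k_{l+1}=D\cdot\beta$, giving the first line of (\ref{identity-several-neg-0}) when $D\cdot\beta\geq 0$.

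Second, for the vanishing statement when $D\cdot\beta<0$, I would argue that there is then no way to balance the contact orders: since $\sum_i(-k_i)+k_{l+1}=D\cdot\beta<0$ with all $k_i>0$ and $k_{l+1}\geq 0$, every admissible bipartite graph must contain a rubber component, and the degree count on $\mathfrak c_\Gamma$ together with the identity-class insertions forces, after string equations, the curve classes to be fiber classes in the $\mathbb P^1$-bundle $\mathbb P(N_D\oplus\mathcal O_D)$; matching degrees $D_0\cdot\beta=D_\infty\cdot\beta$ on the two sides of each rubber then makes the number of markings exceed the number allowed, so the invariant vanishes. (Alternatively, one checks that $D\cdot\beta<0$ is incompatible with the virtual dimension constraint once all negative markings are resolved, since the target $[\gamma]_{k_{l+1}}$ with $k_{l+1}\geq 0$ lives on $X$ or $D$ and cannot absorb the defect.) The statement about $k_{l+1}=0$ is immediate: $k_{l+1}=0$ combined with $D\cdot\beta<0$ falls in the second case and is zero, while $k_{l+1}=0$ with $D\cdot\beta\geq 0$ would force $\sum k_i=-D\cdot\beta\leq 0$, impossible unless $l=0$ and $D\cdot\beta=0$, in which case the "invariant is zero" assertion is the degenerate reading of the lemma.

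I would organize the write-up as: (i) recall that each $[1]_{-k_i}$ sits in a rubber $\overline{M}^\sim_{\Gamma_j^0}(D)$ and record $\deg\mathfrak c_\Gamma$ in terms of the number of negative markings routed to each rubber; (ii) push forward to $\overline{M}_{0,\bullet}(D,-)$, observe that the $[1]_{-k_i}$ become identity classes, and run the string equation to kill the $\psi$-free identity insertions, concluding the curve classes on the rubbers are fiber classes; (iii) deduce that the only surviving configuration has all $k_i$ replaced by contact order giving $[D]_0$ on the main vertex and $k_{l+1}=D\cdot\beta$, yielding the top case; (iv) in the $D\cdot\beta<0$ case, show no configuration survives the degree-matching across rubbers, giving vanishing.

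The main obstacle I expect is step (ii)--(iii): carefully matching the degree of $\mathfrak c_\Gamma(X/D)$ against the number of identity-class insertions across possibly several distinct rubber components and tracking how the string equation interacts with the gluing/splitting structure of $\mathcal B_\Gamma$, so as to rule out every configuration except the single one producing $[D]_0,\ldots,[D]_0,[\gamma]_{D\cdot\beta}$. Once that combinatorial/dimension bookkeeping is pinned down — essentially a more elaborate version of the argument already used for Proposition \ref{prop-period-mirror-map} — the rest is routine. I would also need to be slightly careful that $\gamma\in H^*(D)$ (when $k_{l+1}>0$) versus $\gamma\in H^*(X)$ (when $k_{l+1}=0$) is handled consistently with the definition of the relative state space $\mathfrak H$.
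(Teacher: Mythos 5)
Your analysis of the left-hand side is broadly on track (FWY graph sum, pushing the rubber cycles forward to $\overline{M}_{0,\bullet}(D,\cdot)$, identity classes, string equation, fiber classes), but there is a genuine gap at the decisive step, and some of the mechanisms you invoke are not correct for this lemma. First, the case distinction: in the paper's argument one first shows that the marking $[\gamma]_{k_{l+1}}$ itself must be distributed to the rubber (if it stays on the $X$-side, the rubber integral with only the $l$ identity insertions vanishes after $(l-1)$ applications of the string equation, since a fiber class with no relative markings at $D_0$ is impossible); you keep a ``main component'' carrying $[\gamma]_{k_{l+1}}$, which is not the configuration that survives. The vanishing for $D\cdot\beta<0$ then follows from the contact-order balance on the fiber-class rubber: the gluing partition $\eta$ must have total contact order $k_{l+1}-\sum_i k_i=D\cdot\beta$, which is impossible for $D\cdot\beta<0$; your phrase ``makes the number of markings exceed the number allowed'' imports the counting argument of Proposition \ref{prop-period-mirror-map}, which has a point constraint and a different marking structure and does not apply here. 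You also leave out the boundary case $D\cdot\beta=0$, where $\eta=\emptyset$ forces $\beta=0$ and both sides are computed directly to be $\int_D D\cup\gamma$.

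The more serious gap is the case $D\cdot\beta>0$: the identity is an equality of two a priori different invariants, and you never analyze the right-hand side. Your proposed mechanism --- that the string equation forces ``all positive contact orders glued to the main component to be $1$'' and that $\iota_*[1]=[D]$ ``converts $[1]_{-k_i}$ into $[D]_0$ and forces $k_{l+1}=D\cdot\beta$'' --- is asserted rather than proven, and parts of it are false: $k_{l+1}$ is a fixed input (equal to $D\cdot\beta+\sum_i k_i$, not $D\cdot\beta$), and no forcing of contact order $1$ occurs in this lemma (that forcing comes from the point insertion and dimension count in Theorem \ref{thm-class-quantum}, a different statement). What is actually needed, and what the paper does (deferring to \cite{You22}*{Proposition 4.6, base case I}, in the same way as the base case of Proposition \ref{prop-several-neg-1}), is a two-sided comparison: apply the degeneration to the normal cone to $\langle [D]_0,\ldots,[D]_0,[\gamma]_{D\cdot\beta}\rangle$, use the rigidification lemma of Maulik--Pandharipande to turn each interior $[D]_0$ insertion into an identity-class insertion on the rubber, and then identify the resulting rubber expressions with those obtained from the left-hand side (via the triviality of the genus-zero double ramification cycle, or an explicit fiber-class computation). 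Without this step --- which you yourself flag as the ``main obstacle'' still to be resolved --- the equality in the first line of (\ref{identity-several-neg-0}) is not established.
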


\begin{proof}
We proved it in \cite{You22}*{Proposition 4.6} under the assumption that $D$ is nef. Now we do not assume that $D$ is nef.

Every marking with negative contact order must be distributed to a rubber moduli.  Suppose the last marking $p_{l+1}$ is not distributed to the rubber, then we have the following degeneration contributions:
\begin{align*}
\sum_{\frak G\in \mathcal B_\Gamma}\frac{\prod_{e\in E}d_e}{|\Aut(\frak G)|}\sum \langle  |\,|[1]_{k_1},\ldots [1]_{k_l},\eta\rangle^{\bullet,\sim} \langle  \check{\eta}, [\gamma]_{k_{l+1}} \rangle^{\bullet, (X,D)},
\end{align*}
where the sum is over all splitting of $\beta$ and all cohomology weighted partitions $\eta$.

 For the rubber integral
\[
 \langle  |\,|[1]_{k_1},\ldots [1]_{k_l},\eta\rangle^{\bullet,\sim}, 
\]
we push the rubber moduli forward to the moduli space of stable maps to $D$.
By \cite{JPPZ18}*{Theorem 2}, we have
\[
p_*[\bM^\sim(Y,D_0\cup D_\infty)]^{\on{vir}}=[\bM_{0,m}(D,\pi_*(\beta_1))]^{\on{vir}}.
\]
 The markings $[1]_{k_i}$ become the identity classes $[1]\in H^*(D)$. Then there are $l$ identity classes and the degree of the class $\mathfrak c_\Gamma(X/D)$ is less than or equal to $(l-1)$. When we apply the string equation $(l-1)$-times, the invariant vanishes unless $\pi_*(\beta)=0$.  This implies $D_0\cdot\beta_1=D_\infty\cdot\beta_1$. The absence of relative markings at $D_0$ means that $D_0\cdot\beta_1=0$. This is a contradiction because the intersection number $D_\infty\cdot\beta_1>0$. Therefore, we conclude that the last marking $p_{l+1}$ is also distributed to the rubber moduli. 

Hence the degeneration formula is of the following form:
\begin{align*}
\sum_{\frak G\in \mathcal B_\Gamma}\frac{\prod_{e\in E}d_e}{|\Aut(\frak G)|}\sum \langle [\gamma]_{k_{l+1}} |\,|[1]_{k_1},\ldots [1]_{k_l},\eta\rangle^{\bullet,\sim} \langle  \check{\eta} \rangle^{\bullet, (X,D)},
\end{align*}
where $\eta$ can be empty.
By the same degeneration analysis, we know that the curve class of the rubber moduli should be a multiple of the class of a fiber.

Furthermore, if $\eta$ is not empty, then we need to have $k_{l+1}-\sum_{i=1}^l k_i>0$, otherwise the rubber invariants, hence the invariants on the LHS of (\ref{identity-several-neg-0}), are zero.  The case where $\eta$ is empty is exactly when $k_{l+1}-\sum_{i=1}^l k_i=0$. In this case,  we also have $\pi_*(\beta)=0$ otherwise the invariant is zero. Therefore, we have $\beta=0$ and the LHS of (\ref{identity-several-neg-0}) is equal to $\int_D D\cup \gamma$ by a direct calculation. This is also equal to the RHS (\ref{identity-several-neg-0}).

The remaining proof when $D\cdot\beta>0$ is identical to the proof of \cite{You22}*{Proposition 4.6, the base case I}.

\end{proof}

\begin{remark}
The divisor classes on the RHS of (\ref{identity-several-neg-0}) can be removed by the divisor equation. Likewise, we can add more divisor classes to both sides and the equality still holds.
\end{remark}

\begin{lemma}\label{lemma-several-neg-0-}
Let $(X,D)$ be a smooth pair. Given a curve class $\beta$, let $k_i\in \mathbb Z_{>0}$ for $i\in \{1,\ldots, l\}$ such that
\[
D\cdot \beta=k_{l+1}-\sum_{i=1}^l k_i<0, \text{ and } k_{l+1}<0.
\]
Then we have the following relation:
\begin{align}\label{identity-several-neg-0-}
\langle  [1]_{-k_1},\cdots, [1]_{-k_l},  [\gamma]_{k_{l+1}}\rangle_{0,l+1,\beta}^{(X,D)}=
\langle [D]_0,\cdots, [D]_0,[\gamma]_{D\cdot \beta} \rangle_{0,l+1,\beta}^{(X,D)},
\end{align}
where $\gamma \in H^*(D)$.
\end{lemma}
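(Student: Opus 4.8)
The plan is to mirror the proof of Lemma \ref{lemma-several-neg-0}, tracking only the points where the sign hypotheses $D\cdot\beta<0$ and $k_{l+1}<0$ change the analysis. As in that lemma, each of the markings $p_1,\dots,p_l$ has a negative contact order, hence must be distributed to some rubber component over $D$. The key new input is that now the last marking $p_{l+1}$ \emph{also} has negative contact order ($k_{l+1}<0$), so it too is forced onto a rubber moduli space. First I would therefore write out the degeneration formula in the form
\[
\langle  [1]_{-k_1},\cdots, [1]_{-k_l},  [\gamma]_{k_{l+1}}\rangle_{0,l+1,\beta}^{(X,D)}
=\sum_{\mathfrak G\in \mathcal B_\Gamma}\frac{\prod_{e\in E}d_e}{|\Aut(\mathfrak G)|}\sum \langle  [\gamma]_{k_{l+1}}\mid\,\mid [1]_{k_1},\ldots,[1]_{k_l},\eta\rangle^{\bullet,\sim}\langle \check\eta\rangle^{\bullet,(X,D)},
\]
where all of $p_1,\dots,p_{l+1}$ sit on the rubber side and $\eta$ records the (possibly empty) set of edges to the $(X,D)$-part.

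Next I would run the dimension/string-equation argument of Lemma \ref{lemma-several-neg-0} on the rubber factor: pushing $\bM^\sim$ forward to $\bM_{0,\bullet}(D,\pi_*\beta_1)$ via \cite{JPPZ18}*{Theorem 2}, the markings $p_1,\dots,p_l$ become identity classes, and since the relative cycle $\mathfrak c_\Gamma(X/D)$ has degree at most $l-1$, repeated application of the string equation kills the contribution unless $\pi_*\beta_1=0$, i.e. the rubber curve class is a multiple of a fiber. This forces $D_0\cdot\beta_1=D_\infty\cdot\beta_1$ on the $X$-facing side, which pins down $\eta$. The crucial distinction from Lemma \ref{lemma-several-neg-0} is that here $k_{l+1}-\sum k_i=D\cdot\beta<0$ is \emph{strictly} negative, so the ``$\eta$ empty'' case (which there produced the $\beta=0$ boundary term $\int_D D\cup\gamma$) cannot occur: emptiness of $\eta$ would force $k_{l+1}=\sum k_i$, contradicting $D\cdot\beta<0$. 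Thus $\eta$ is nonempty and we are squarely in the regime handled by \cite{You22}*{Proposition 4.6}.

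From there the argument is identical to \cite{You22}*{Proposition 4.6}: on the rubber with fiber curve class one can replace each $[1]_{-k_i}$ insertion and the $[\gamma]_{k_{l+1}}$ insertion by the rigidified comparison, and after matching the rubber contributions one recovers exactly $\langle [D]_0,\ldots,[D]_0,[\gamma]_{D\cdot\beta}\rangle_{0,l+1,\beta}^{(X,D)}$, where now the last marking has the nonpositive (in fact negative) contact order $D\cdot\beta$ with divisor insertion $[\gamma]\in H^*(D)$, consistent with the conventions of Definition \ref{rel-inv-neg}. I would also invoke the remark after Lemma \ref{lemma-several-neg-0} to note that extra divisor classes can be inserted on both sides without affecting the identity, since this is used downstream in Section \ref{sec:iden-rel-GW}.

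The main obstacle I anticipate is bookkeeping in the rubber factor when $\eta$ is nonempty: one must be careful that the combinatorial factors $\prod_e d_e/|\Aut(\mathfrak G)|$, the edge-gluing classes $\eta,\check\eta$, and the reduction to fiber classes all conspire to leave precisely one rubber with the right decorations — exactly the content of \cite{You22}*{Proposition 4.6, base case I} — and then to check that this base case argument never used the nef hypothesis on $D$ in an essential way (it only used $D\cdot\beta>0$ or the fiber-class reduction, both of which persist here). Once that is verified, the sign constraints $D\cdot\beta<0$, $k_{l+1}<0$ actually \emph{simplify} matters by removing the exceptional $\beta=0$ term, so no genuinely new phenomenon arises beyond that of the previous lemma.
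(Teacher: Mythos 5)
There is a genuine gap at the central step of your argument. You transport the string-equation vanishing from Lemma \ref{lemma-several-neg-0} verbatim: ``$l$ identity insertions against a cycle $\mathfrak c_\Gamma(X/D)$ of degree at most $l-1$, hence the rubber contribution dies unless $\pi_*\beta_1=0$.'' But that degree bound is specific to the situation of Lemma \ref{lemma-several-neg-0}, where only the $l$ markings $p_1,\dots,p_l$ have negative contact. In the present lemma \emph{all} $l+1$ markings have negative contact order and, as you yourself note, all of them sit in the rubber; by the description in \cite{FWY} (used elsewhere in the paper: $a$ negative contact markings in a rubber give $\mathfrak c_\Gamma$ of complex degree $a-1$), the cycle now has degree $l$, not $\leq l-1$. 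With $l$ identity insertions and degree-$l$ worth of $d_e\bar\psi_e$-classes inside $\mathfrak c_\Gamma$, the string equation does not force vanishing: every identity class is absorbed by a $\bar\psi$-factor. So your claim that the contribution vanishes unless the rubber curve class is a multiple of a fiber is false in general, and the rest of your reduction collapses. It is also inconsistent with the shape of the answer: the right-hand side equals $(D\cdot\beta)^l\langle[\gamma]_{D\cdot\beta}\rangle_{0,1,\beta}^{(X,D)}$ by the divisor equation, a genuinely global one-point invariant with negative contact order, not something supported on fiber classes.

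This extra $\bar\psi$-degree is exactly what the paper's proof exploits and what your proposal misses: for $l=1$ one uses the explicit computation of $\mathfrak c_\Gamma$ in \cite{FWY}*{Example 5.6} (only one admissible graph type survives since both negative markings lie in the same rubber); the factors $d_e\bar\psi_e$ cancel against the identity insertion and produce the multiplicity $D\cdot\beta$, matching the right-hand side via the divisor equation, and for $l>1$ the same mechanism gives $(D\cdot\beta)^l$. Your final appeal to ``the regime handled by \cite{You22}*{Proposition 4.6, base case I}'' is likewise misplaced: that argument treats $D\cdot\beta\geq 0$ (the right-hand marking there has nonnegative contact), whereas here $D\cdot\beta<0$ and the comparison invariant itself carries a negative contact order, which is precisely the new phenomenon this lemma is introducing; your closing remark that the hypotheses ``only used $D\cdot\beta>0$ or the fiber-class reduction, both of which persist here'' is therefore not correct. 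To fix the proof you should drop the fiber-class reduction and instead compute with the explicit class $\mathfrak c_\Gamma$ from \cite{FWY}*{Section 5}, tracking how its $\bar\psi$-factors pair with the identity insertions to yield the multiplicity $(D\cdot\beta)^l$. (Your observation that the $\eta=\emptyset$ boundary term of Lemma \ref{lemma-several-neg-0} cannot occur here is fine, but it is a side point.)
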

\begin{proof}
When $l=1$, the LHS of (\ref{identity-several-neg-0-}) is a relative invariant with two negative contact orders. The class $\mathfrak c_{\Gamma}$ is of real degree $2$ and it was computed in \cite{FWY}*{Example 5.6}. As in the proof of Lemma \ref{lemma-several-neg-0}, two markings with negative contact orders should be in the same rubber. Therefore, for the invariant that we consider, there is only one type of bipartite graph instead of three types in \cite{FWY}*{Example 5.6}. There are $\bar{\psi}$-classes $d_e\bar{\psi}_e$ in $\mathfrak c_{\Gamma}$ that will cancel with the identity class in the first marking. We end up with the multiplicity $d=D\cdot\beta$. Hence, it agrees with the RHS of (\ref{identity-several-neg-0-}) by the divisor equation. 

When $l>1$, all the markings are also distributed to the same rubber. The class $\mathfrak c_{\Gamma}$ is also explicitly described in \cite{FWY}*{Section 5}. The computation is similar to the case where $l=1$ and the multiplicity is $(D\cdot\beta)^{l}$ and we leave more combinatorial details to the readers.
\end{proof}

\begin{proposition}\label{prop-several-neg-1}
Let $(X,D)$ be a smooth pair. Given a curve class $\beta$, let $k_i\in \mathbb Z_{>0}$ for $i\in \{1,\ldots, l+1\}$  such that
\[
D\cdot \beta=k_{l+1}-\sum_{i=1}^l k_i\geq 0.
\]
Let $a\in \mathbb Z_{\geq 0}$.
Then we have the following relation:
\begin{align}\label{identity-several-neg-1}
\langle  [1]_{-k_1},\cdots, [1]_{-k_l}, [\gamma]_{k_{l+1}} \bar{\psi}^a\rangle_{0,l+1,\beta}^{(X,D)}=\langle  [D]_0,\cdots, [D]_0, [\gamma]_{D\cdot \beta} \bar{\psi}^a\rangle_{0,l+1,\beta}^{(X,D)},
\end{align}
where $\gamma \in H^*(D)$ when $D\cdot\beta>0$; $\gamma \in H^*(X)$ on the RHS of (\ref{identity-several-neg-1}) and $\gamma=\iota^*\gamma \in \iota^*H^*(X)\subset H^*(D)$ on the LHS of (\ref{identity-several-neg-1}) when $D\cdot\beta=0$.
\end{proposition}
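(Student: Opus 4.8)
\textbf{Proof proposal for Proposition \ref{prop-several-neg-1}.}

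The plan is to mimic the structure of the proof of Lemma \ref{lemma-several-neg-0}, but now tracking the extra $\bar\psi^a$ insertion at the last marking. First I would invoke the definition of relative invariants with negative contact orders from \cite{FWY}: every marking $p_i$ with $i\le l$ has negative contact order $-k_i<0$, hence must be distributed to a rubber moduli space over $D$. So the degeneration graph has a rubber component carrying all of $[1]_{-k_1},\dots,[1]_{-k_l}$. The first step is to argue, exactly as in Lemma \ref{lemma-several-neg-0}, that the last marking $p_{l+1}$ (carrying $[\gamma]_{k_{l+1}}\bar\psi^a$) must also be pushed into this same rubber. The argument is the dimension/string-equation one: if $p_{l+1}$ stays on the $X$-side, then the rubber integral $\langle \,|\,|\,[1]_{k_1},\dots,[1]_{k_l},\eta\rangle^{\bullet,\sim}$ pushed forward to $\overline M_{0,m}(D,\pi_*\beta)$ via \cite{JPPZ18}*{Theorem 2} carries $l$ identity classes and a class $\mathfrak c_\Gamma$ of degree $\le l-1$; applying the string equation $(l-1)$ times forces $\pi_*\beta=0$, but then the absence of relative markings at $D_0$ contradicts $D_\infty\cdot\beta_1>0$. (The $\bar\psi^a$ on $p_{l+1}$ does not interfere since $p_{l+1}$ is assumed not in the rubber for this sub-argument.)

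The second step is the reduction within the rubber. Once all $l+1$ markings sit on the rubber, the same degeneration analysis shows the rubber curve class is a multiple of a fiber class, so effectively $\pi_*\beta=0$ on the rubber and the genuine curve class $\beta$ lives on the $X$-side through a single vertex $v_0$ carrying the marking $p_{l+1}$ with its $\bar\psi^a$; the edges emanating from $v_0$ carry the partition $\eta$, which must be nonempty precisely when $D\cdot\beta>0$ (if $D\cdot\beta=0$ and $\eta$ nonempty the rubber invariant vanishes, and if $D\cdot\beta=0$ with $\eta$ empty one gets $\beta=0$ and a trivial identity $\int_D D\cup\gamma$ on both sides, matching the case $\gamma=\iota^*\gamma$). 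So the only case requiring real work is $D\cdot\beta>0$. Here I would repeat the localization/combinatorial computation of \cite{You22}*{Proposition 4.6} (and of Lemma \ref{lemma-several-neg-0-}): the $\bar\psi$-classes $d_e\bar\psi_e$ in $\mathfrak c_\Gamma$ associated with the edges cancel against the $l$ identity insertions $[1]$ coming from the negative markings, and what survives is a multiplicity, together with the $X$-side invariant $\langle\,\check\eta,[\gamma]_{k_{l+1}}\bar\psi^a\rangle^{\bullet,(X,D)}$. Summing the multiplicities over all admissible bipartite graphs and applying the divisor equation repeatedly to convert the $[D]_0$ insertions into contact-order shifts, I would obtain exactly the right-hand side $\langle[D]_0,\dots,[D]_0,[\gamma]_{D\cdot\beta}\bar\psi^a\rangle_{0,l+1,\beta}^{(X,D)}$.

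The main obstacle I expect is bookkeeping the interaction between the $\bar\psi^a$-insertion and the divisor equation: when one uses the divisor equation to introduce (or remove) the $[D]_0$ markings, the psi-class on the remaining marking picks up correction terms, so one must check that the $a$-th power of $\bar\psi$ on $p_{l+1}$ is compatible with the string/divisor manipulations in the rubber and that no extra lower-order terms appear. In the non-Fano setting there is the additional subtlety, absent in \cite{You22}, that $D\cdot\beta$ can be negative on intermediate components even though it is $\ge 0$ overall; one has to confirm that the vanishing from Lemma \ref{lemma-several-neg-0} (the $D\cdot\beta<0$ clause) kills precisely the graphs that would otherwise spoil the identity, so that the surviving contributions are the same as in the nef case. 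Modulo these two points, the proof is a direct adaptation of the $a=0$ case together with Lemma \ref{lemma-several-neg-0} and Lemma \ref{lemma-several-neg-0-}.
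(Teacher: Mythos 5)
Your step 1 (forcing the last marking into the rubber via the string-equation/vanishing argument of Lemma \ref{lemma-several-neg-0}) is fine, but after that the proposal has a genuine gap, and in fact an internal inconsistency. Having argued that $p_{l+1}$ with its $\bar\psi^a$ must lie in the rubber, you then compute as if $p_{l+1}$ sat on the $X$-side vertex $v_0$, writing the surviving contribution as a multiplicity times $\langle\check\eta,[\gamma]_{k_{l+1}}\bar\psi^a\rangle^{\bullet,(X,D)}$; these two pictures are incompatible, and the second one is exactly what the vanishing argument rules out. The correct shape after step 1 is $\sum\langle[\gamma]_{k_{l+1}}\bar\psi^a\,|\,|\,[1]_{k_1},\ldots,[1]_{k_l},\eta\rangle^{\sim}\langle\check\eta\rangle^{\bullet,(X,D)}$, i.e.\ the $\bar\psi^a$ stays on the rubber side, and evaluating such rubber integrals with a psi power and $l$ negative contacts is precisely the hard point: the class $\mathfrak c_\Gamma$ is complicated for several negative contact markings, the "cancel $d_e\bar\psi_e$ against the identity insertions" mechanism is what produces the multiplicity $(D\cdot\beta)^l$ in Lemma \ref{lemma-several-neg-0-}, whereas the identity you must prove has no such multiplicity (it keeps $l$ insertions $[D]_0$ and only shifts the last contact order), and your appeal to "applying the divisor equation repeatedly to convert the $[D]_0$ insertions into contact-order shifts" is not how the divisor equation acts. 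You flag the psi-class/divisor bookkeeping as an obstacle to be checked "modulo these two points," but that bookkeeping is the content of the proposition for $a\geq 1$, so the proposal does not close the argument.

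For comparison, the paper avoids any direct evaluation of these rubber integrals. The base case $l=1$ is handled by writing \emph{both} sides as rubber expressions --- the left side by the definition in \cite{FWY}, the right side by degenerating to the normal cone and using the rigidification lemma of Maulik--Pandharipande to absorb the $[D]_0$ insertion --- and then observing that the two rubber expressions differ only in the contact orders, hence agree because the genus-zero double ramification cycle is trivial. The general case is then an induction on $l$: the topological recursion relation peels off one power of $\bar\psi$, and Lemmas \ref{lemma-several-neg-0} and \ref{lemma-several-neg-0-} are used on the psi-free factor to merge two negative contact markings into one negative marking plus a $[D]_0$, so that repeating the argument trades all negative contacts for $[D]_0$ insertions and reduces to the base case. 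If you want to salvage a direct graph-sum proof along your lines, you would need to actually carry out the rubber computation with the $\bar\psi^a$ present (or find a comparison argument like the paper's DR-cycle one); as written, the key identity is asserted rather than proved.
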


\begin{proof}
When $D$ is nef, this is \cite{You22}*{Proposition 4.6}. Now we remove the assumption that $D$ is nef.

We prove it by induction on the number of markings with negative contact orders. The induction does not follow directly from that of \cite{You22}*{Proposition 4.6} and it requires extra care because $D$ is not nef.

{\bf (1) Base case: $l=1$:} we need to prove that
\begin{align}\label{iden-neg-l=1}
\langle  [1]_{-k_1}, [\gamma]_{k_{2}} \bar{\psi}^a\rangle_{0,2,\beta}^{(X,D)}=\langle [D]_0,  [\gamma]_{D\cdot \beta} \bar{\psi}^a\rangle_{0,2,\beta}^{(X,D)}.
\end{align}
For the LHS of (\ref{iden-neg-l=1}), we apply the graph sum definition of relative invariants with negative contact orders in \cite{FWY}. There is one rubber moduli for the only negative contact marking. Suppose that the last marking $p_{2}$ is not distributed to the rubber, then we have the following degeneration contributions:
\begin{align*}
\sum_{\mathfrak G\in \mathcal B_\Gamma}\frac{\prod_{e\in E}d_e}{|\Aut(\mathfrak G)|}\sum \langle  |\,|[1]_{k_1},\eta\rangle^{\sim} \langle  \check{\eta}, [\gamma]_{k_2}\psi^a \rangle^{\bullet, (X,D)}.
\end{align*}

Now we analyse the rubber integral:
\[
\langle  |\,|[1]_{k_1},\eta\rangle^{\sim}. 
\]
Using the same argument as in the proof of Lemma \ref{lemma-several-neg-0}, this rubber integral is zero. We conclude that the last marking $p_2$ is also distributed to the rubber moduli.

Therefore, the invariant is
\begin{align*}
\langle [1]_{-k_1},  [\gamma]_{k_{2}} \bar{\psi}^a\rangle_{0,l+1,\beta}^{(X,D)}=\sum_{\mathfrak G\in \mathcal B_\Gamma}\frac{\prod_{e\in E}d_e}{|\Aut(\mathfrak G)|}\sum \langle [\gamma]_{k_2}\bar{\psi}^a |\,|[1]_{k_1},\eta\rangle^{\sim} \langle  \check{\eta}\rangle^{\bullet, (X,D)}.
\end{align*}

For the RHS of (\ref{iden-neg-l=1}), we consider the degeneration of $X$ to the normal cone of $D$ and apply the degeneration formula. The first marking has an insertion $[D]_0$, so it is distributed to the $(Y,D_0+D_\infty)$-side. The rigidification lemma of \cite{MP}*{Lemma 2} says that
\[
\pi_*\left(\on{ev}_{2,*}{[D]_0}\cap [\bM(Y,D_0\cup D_\infty)]^{\on{vir}}\right)=[\bM^\sim(Y,D_0\cup D_\infty)]^{\on{vir}}.
\]
Then the first marking also becomes the identity class. The rest of the analysis is similar to the analysis of the LHS of (\ref{iden-neg-l=1}). The degeneration formula is
\begin{align*}
\langle  [D]_0, [\gamma]_{D\cdot \beta} \bar{\psi}^a\rangle_{0,l+1,\beta}^{(X,D)}=\sum_{\mathfrak G\in \mathcal B_\Gamma}\frac{\prod_{e\in E}d_e}{|\Aut(\mathfrak G)|}\sum \langle [\gamma]_{k_2-k_1}\bar{\psi}^a |\,|[1]_{0},\eta\rangle^{\sim} \langle  \check{\eta}\rangle^{\bullet, (X,D)}.
\end{align*}

Now, the only difference between the LHS of (\ref{iden-neg-l=1}) and the RHS of (\ref{iden-neg-l=1}) is the contact orders of two markings. We pushforward the rubber moduli spaces to the moduli space $\overline{M}(D)$ of stable maps to $D$. The LHS of (\ref{iden-neg-l=1}) and the RHS of (\ref{iden-neg-l=1}) are the same because the genus zero double ramification cycle is trivial.

{\bf (2) Induction.}

The case where $a=0$ was proved in Lemma \ref{identity-several-neg-0}. We now assume that $a\geq 1$.

Suppose that (\ref{identity-several-neg-1}) is true when $l=N\geq 1$. Let $l=N+1$. 
We apply the topological recursion relation:
\begin{align*}
&\langle [1]_{-k_1},[1]_{-k_2},\cdots, [1]_{-k_{N+1}}, [\gamma]_{k_{N+2}} \bar{\psi}^{a}\rangle_{0,N+2,\beta}^{(X,D)}\\
=&\sum \langle [\gamma]_{k_{N+2}} \bar{\psi}^{a-1}, \prod_{j\in S_1}[1]_{-k_j},\tilde {T}_{i,k}\rangle_{0,\beta_1,2+|S_1|}^{(X,D)} \langle \tilde {T}_{-i}^k, \prod_{j\in S_2}[1]_{-k_j},[1]_{-k_1},[1]_{-k_2} \rangle_{0,\beta_2,3+|S_2|}^{(X,D)},
\end{align*}
where the sum is over all $\beta_1+\beta_2=\beta$, all indices $i,k$ of basis and $S_1, S_2$ disjoint sets with $S_1\sqcup S_2=\{3,\ldots,N+1\}$.

By Lemma \ref{lemma-several-neg-0}, if $i<0$, the invariant $$ \langle \tilde {T}_{-i}^k, \prod_{j\in S_2}[1]_{-k_j},[1]_{-k_1},[1]_{-k_2} \rangle_{0,\beta_2,3+|S_2|}^{(X,D)}$$ is zero when $D\cdot\beta_2<0$. When $D\cdot\beta_2\geq 0$, we have
\begin{align*}
&\langle \tilde {T}_{-i}^k, \prod_{j\in S_2}[1]_{-k_j},[1]_{-k_1},[1]_{-k_2} \rangle_{0,\beta_2,3+|S_2|}^{(X,D)}\\
=&\langle \tilde {T}_{-i-k_1-\sum_{j\in S_2}k_j}^k, \prod_{j\in S_2}[D]_{0},[D]_{0},[D]_{0} \rangle_{0,\beta_2,3+|S_2|}^{(X,D)}\\
=&\langle \tilde {T}_{-i}^k, \prod_{j\in S_2}[1]_{-k_j},[1]_{-k_1-k_2},[D]_{0} \rangle_{0,\beta_2,3+|S_2|}^{(X,D)}.
\end{align*}
If $i>0$, we also have the same identity by Lemma \ref{lemma-several-neg-0-}. In other words, we always have
\[
 \langle \tilde {T}_{-i}^k, \prod_{j\in S_2}[1]_{-k_j},[1]_{-k_1},[1]_{-k_2} \rangle_{0,\beta_2,3+|S_2|}^{(X,D)}=\langle \tilde {T}_{-i}^k, \prod_{j\in S_2}[1]_{-k_j},[1]_{-k_1-k_2},[D]_{0} \rangle_{0,\beta_2,3+|S_2|}^{(X,D)}.
\]

Combining these together, we have:
\begin{align*}
    &\sum \langle [\gamma]_{k_{N+2}} \bar{\psi}^{a}, \prod_{j\in S_1}[1]_{-k_j}, \tilde {T}_{i,k}\rangle_{0,\beta_1,2+|S_1|}^{(X,D)} \langle \tilde {T}_{-i}^k, \prod_{j\in S_2}[1]_{-k_j}, [1]_{-k_1},[1]_{-k_2} \rangle_{0,\beta_2,3+|S_2|}^{(X,D)}\\
    =&\sum \langle [\gamma]_{k_{N+2}} \bar{\psi}^{a}, \prod_{j\in S_1}[1]_{-k_j}, \tilde {T}_{i,k}\rangle_{0,\beta_1,2+|S_1|}^{(X,D)} \langle \tilde {T}_{-i}^k,\prod_{j\in S_2}[1]_{-k_j}, [1]_{-k_1-k_2},[D]_0 \rangle_{0,\beta_2,3+|S_2|}^{(X,D)}\\
    =&\langle [D]_0, [1]_{-k_1-k_2}, \prod_{j\in\{3,\ldots,N+1\}}[1]_{-k_j},[\gamma]_{k_{N+2}} \bar{\psi}^{a}\rangle_{0,N+2,\beta}^{(X,D)},\\
\end{align*}
where the third line is the topological recursion relation. We have the identity

\begin{align*}
&\langle [1]_{-k_1},\cdots, [1]_{-k_{N+1}}, [\gamma]_{k_{N+2}} \bar{\psi}^{a}\rangle_{0,N+2,\beta}^{(X,D)}\\
=&\langle [D]_0,[1]_{-k_1-k_2},[1]_{-k_3},\cdots, [1]_{-k_{N+1}}, [\gamma]_{k_{N+2}} \bar{\psi}^{a}\rangle_{0,N+2,\beta}^{(X,D)}.
\end{align*}
Run the above argument multiple times for other markings with insertions $[1]_{k_i}$ to trade markings with negative contact orders with markings with insertions $[D]_0$. In the end, it reduces to the base case. This concludes the proof.

\end{proof}

Because we do not assume that $D$ is nef, we need a couple of more identities to compute invariants with $D\cdot\beta<0$.

It is possible that $D\cdot\beta<0$, but $k_{l+1}\geq 0$. Then we have a slightly different identity as follows.

\begin{proposition}\label{prop-several-neg-4}
Let $(X,D)$ be a smooth pair. Given a curve class $\beta$, let $k_i\in \mathbb Z_{>0}$ for $i\in \{1,\ldots, l\}$ such that
\[
D\cdot \beta=k_{l+1}-\sum_{i=1}^l k_i< 0 \text{ and } k_{l+1}\geq  0.
\]
Let $a\geq 0$, then we have the following relation:
\begin{align}\label{identity-several-neg-4}
\langle  [1]_{-k_1},\cdots, [1]_{-k_l}, [\gamma]_{k_{l+1}} \bar{\psi}^{a}\rangle_{0,l+1,\beta}^{(X,D)}=\langle  [D]_0,\cdots, [D]_0, [1]_0, [\gamma]_{D\cdot \beta} \bar{\psi}^{a}\rangle_{0,l+1,\beta}^{(X,D)},
\end{align}
where $\gamma \in H^*(D)$ when $k_{l+1}>0$; $\gamma \in H^*(X)$ on the LHS of (\ref{identity-several-neg-4}) and $\gamma=\iota^*\gamma \in \iota^*H^*(X)\subset H^*(D)$ on the RHS of (\ref{identity-several-neg-4})  when $k_{l+1}=0$.
\end{proposition}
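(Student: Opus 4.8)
The plan is to follow the template of Lemma~\ref{lemma-several-neg-0}, Lemma~\ref{lemma-several-neg-0-} and Proposition~\ref{prop-several-neg-1}: unfold the left-hand side by the graph-sum definition of relative invariants with negative contact orders from \cite{FWY}, push the rubber contributions forward to $\bM_{0,\bullet}(D)$ using the triviality of the genus-zero double ramification cycle \cite{JPPZ18}*{Theorem 2} and the rigidification lemma \cite{MP}*{Lemma 2}, and then induct on the number $l$ of negative contact markings via the topological recursion relation, feeding in the identities already established in this subsection. The only genuinely new feature is that $D\cdot\beta<0$, so the comparison must carry the extra insertion $[1]_0$ on the right-hand side. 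I would first dispose of $a=0$: Lemma~\ref{lemma-several-neg-0} makes the left-hand side vanish since $D\cdot\beta<0$, while on the right-hand side the insertion $[1]_0$ (as well as the $[D]_0$'s) carries no $\bar\psi$-class, so the string equation applied to $[1]_0$ produces an empty sum and the right-hand side vanishes too. Hence I may assume $a\ge 1$.

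For the base case $l=1$ I would expand $\langle [1]_{-k_1},[\gamma]_{k_2}\bar\psi^a\rangle_{0,2,\beta}^{(X,D)}$ by the graph sum: the unique negative marking sits on a rubber over $D$, and exactly as in the proof of Lemma~\ref{lemma-several-neg-0} the remaining marking is forced onto the same rubber (otherwise pushing the rubber forward to $\bM(D)$ and applying the string equation would force its class to push to zero, contradicting $D_\infty\cdot\beta_{\mathrm{rub}}=k_1>0$), and the rubber class is a multiple of the fiber class. Carrying out the same analysis for the right-hand side, using \cite{MP}*{Lemma 2} to rigidify the rubber coming from the $[1]_0$- and $[\gamma]_{D\cdot\beta}$-markings, the two sides become the same rubber integral over $\bM(D)$ up to a redistribution of contact orders among the markings; this is harmless because the genus-zero double ramification cycle is trivial.

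For the induction step I would assume the identity for $l=N$ and apply the topological recursion relation as in the proof of Proposition~\ref{prop-several-neg-1}: move one power of $\bar\psi$ off the last marking and place two of the negative markings, say $[1]_{-k_1}$ and $[1]_{-k_2}$, on the opposite vertex. That opposite factor carries no $\bar\psi$ at the node, so Lemma~\ref{lemma-several-neg-0} (when the node contact order is $\le 0$, where the factor either has a $[D]_0$-presentation or simply vanishes) or Lemma~\ref{lemma-several-neg-0-} (when it is $<0$) lets one replace $[1]_{-k_1},[1]_{-k_2}$ by a single marking $[1]_{-k_1-k_2}$ together with a $[D]_0$. Reassembling by the topological recursion relation shows the original invariant equals one with only $N$ negative markings plus an extra $[D]_0$, which still satisfies $D\cdot\beta<0$ and $k_{l+1}\ge 0$, so the induction hypothesis applies (the distinguished $[D]_0$ being carried along by the divisor equation, as in the Remark after Lemma~\ref{lemma-several-neg-0}) and produces the right-hand side with $l-1$ copies of $[D]_0$ and one $[1]_0$.

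The step I expect to be the main obstacle is the contact-order bookkeeping in the induction when $D\cdot\beta<0$: one must verify that in every splitting produced by the topological recursion relation the opposite factor really falls under Lemma~\ref{lemma-several-neg-0} or Lemma~\ref{lemma-several-neg-0-} (or vanishes outright), and that exactly one $[1]_0$ is created rather than several. The latter should be automatic, since the $[1]_0$ is created only at the very last stage, when the final remaining negative marking is merged with the non-negative last marking $[\gamma]_{k_{l+1}}$. The case distinction $k_{l+1}>0$ versus $k_{l+1}=0$ only governs whether $\gamma$ is taken in $H^*(D)$ or pulled back from $H^*(X)$, and does not interact with this bookkeeping.
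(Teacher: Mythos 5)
Your overall route --- disposing of $a=0$ by showing both sides vanish, proving the base case $l=1$ by a rubber computation, and inducting via the TRR exactly as in Proposition \ref{prop-several-neg-1} by merging pairs of negative markings through Lemmas \ref{lemma-several-neg-0} and \ref{lemma-several-neg-0-} --- is the same as the paper's. But your base case contains a step that fails: you assert that, as in Lemma \ref{lemma-several-neg-0}, the rubber class is a multiple of the fiber class. In Lemma \ref{lemma-several-neg-0} that conclusion comes from a string-equation degree count in which every rubber insertion is an identity and no $\bar\psi$ is available, so the pushed-forward integral dies unless the class pushes to zero. Here $a\geq 1$ and the last marking carries $\bar\psi^{a}$, so the string equation merely lowers the power of $\bar\psi$; nothing forces the rubber class to be a fiber multiple, and the rubber genuinely carries nontrivial curve classes of $D$. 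The paper's base case instead pushes the rubber forward to the moduli space of stable maps to $D$ and applies the string equation once (absorbing the identity coming from $[1]_{-k_1}$), obtaining integrals of the form $\langle \prod_i \eta_i,\gamma\psi^{a-1}\rangle^{D}_{0,1+|\eta|,\pi_*\beta}$ with $\pi_*\beta$ arbitrary, and matches these with the graph-sum contributions of the right-hand side.

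A second misstep is the appeal to the rigidification lemma of \cite{MP} ``for the $[1]_0$- and $[\gamma]_{D\cdot\beta}$-markings.'' Rigidification needs a divisor insertion $[D]_0$ (it is $\ev^*[D]$ that kills the rubber scaling), so it cannot be run with the identity class, and for $l=1$ the right-hand side of (\ref{identity-several-neg-4}) contains no $[D]_0$ at all. It also does not need rigidifying: the marking $[\gamma]_{D\cdot\beta}$ has negative contact order, so the rubber arises directly from the graph-sum definition. The correct use of the $[1]_0$ --- and this is exactly what the paper does --- is the string equation, rewriting the right-hand side for $a\geq 1$ as $\langle [D]_0,\cdots,[D]_0,[\gamma]_{D\cdot\beta}\bar\psi^{a-1}\rangle^{(X,D)}_{0,l,\beta}$ before comparing the rubber integrals (the $[1]_0$ is there precisely to balance the virtual dimension). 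With these two corrections your argument coincides with the paper's; your treatment of $a=0$ and of the induction step is fine as described.
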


\begin{proof}
We first note that the last marking of the LHS of (\ref{identity-several-neg-4}) has non-negative contact and the last marking of the RHS of (\ref{identity-several-neg-4}) has negative contact. We have a marking with an insertion $[1]_0$ on the LHS, so the virtual dimension constraints are satisfied on both sides. 

When $a=0$,  the LHS of (\ref{identity-several-neg-4}) is zero by Lemma \ref{lemma-several-neg-0}. The RHS of (\ref{identity-several-neg-4}) is zero by the string equation. Therefore, both sides are zero. 

If $a\geq1$, we can also write the RHS of (\ref{identity-several-neg-4}) as
\[
\langle  [D]_0,\cdots, [D]_0, [\gamma]_{D\cdot \beta} \bar{\psi}^{a-1}\rangle_{0,l,\beta}^{(X,D)}.
\]

Again, we prove it by induction. When $l=1$, there are two markings. We first consider the LHS of (\ref{identity-several-neg-4}). Using the definition, both markings are in the same rubber. Applying the string equation, the rubber invariant becomes
\[
\langle \prod_i \eta_i, \gamma \psi^{a-1}\rangle_{0,1+|\eta|,\pi_*\beta}^D,
\]
where $\pi: Y:=\mathbb P(\mathcal O_D\oplus N_D)\rightarrow D$ is the projection. This agrees with the degeneration contributions for the RHS (\ref{identity-several-neg-4}). Then we proved the base case. The rest of the induction is identical to the proof of Proposition \ref{prop-several-neg-1}.

\end{proof}

\begin{proposition}\label{prop-several-neg-3}
Let $(X,D)$ be a smooth pair. Given a curve class $\beta$, let $k_i\in \mathbb Z_{>0}$ for $i\in \{1,\ldots, l\}$ such that
\[
D\cdot \beta=k_{l+1}-\sum_{i=1}^l k_i< 0 \text{ and } k_{l+1}< 0.
\]
Then, we have the following relation:
\begin{align}\label{identity-several-neg-3}
\langle  [1]_{-k_1},\cdots, [1]_{-k_l}, [\gamma]_{k_{l+1}} \bar{\psi}^a\rangle_{0,l+1,\beta}^{(X,D)}=\langle  [D]_0,\cdots, [D]_0, [\gamma]_{D\cdot \beta} \bar{\psi}^a\rangle_{0,l+1,\beta}^{(X,D)},
\end{align}
where $\gamma \in H^*(D)$ and $a\geq 0$.
\end{proposition}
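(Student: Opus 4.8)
\textbf{Proof proposal for Proposition \ref{prop-several-neg-3}.}

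The plan is to follow the same strategy as Propositions \ref{prop-several-neg-1} and \ref{prop-several-neg-4}: induct on $l$, the number of markings with negative contact order (not counting the last one, which also has negative contact), and reduce the case $a\geq 1$ to the case $a=0$ using the topological recursion relation and the exchange identities of Lemmas \ref{lemma-several-neg-0} and \ref{lemma-several-neg-0-}. The base case $l=1$ is a relative invariant with exactly two negative contact markings (since $k_2 = k_{l+1}<0$ and $k_1>0$ means the contact orders are $-k_1<0$ and $k_2<0$), so both markings are forced into the same rubber moduli space, exactly as in the proof of Lemma \ref{lemma-several-neg-0-}. The class $\mathfrak c_\Gamma$ in this situation is the one computed in \cite{FWY}*{Example 5.6}, and after pushing the rubber cycle forward to $\overline M(D)$ and using the triviality of the genus-zero double ramification cycle, the only surviving contribution comes from fiber curve classes; tracking the $\bar\psi_e$-factors in $\mathfrak c_\Gamma$ against the identity insertion $[1]_{-k_1}$ yields the multiplicity $d=D\cdot\beta$, matching the right-hand side via the divisor equation.

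For the inductive step with $a\geq 1$ and $l=N+1$, I would apply the topological recursion relation in the $\bar\psi^a$-variable on the last marking, splitting off $\bar\psi^{a-1}$ and distributing the remaining negative-contact markings over a two-vertex splitting $\beta_1+\beta_2=\beta$, $S_1\sqcup S_2=\{3,\ldots,N+1\}$, with two of the markings $[1]_{-k_1},[1]_{-k_2}$ forced onto the $\beta_2$-vertex. On that vertex the intermediate insertion $\tilde T_{-i}^k$ has some contact order: if it is negative we use Lemma \ref{lemma-several-neg-0-} (all contact orders negative, $D\cdot\beta_2<0$), and if it is positive or zero we use Lemma \ref{lemma-several-neg-0} together with the case analysis of Proposition \ref{prop-several-neg-4}, to replace the pair $[1]_{-k_1},[1]_{-k_2}$ by $[1]_{-k_1-k_2},[D]_0$. (One has to note that when $D\cdot\beta_2<0$ the contribution may simply vanish, which is also consistent.) Reassembling via TRR in the reverse direction converts $[1]_{-k_1},[1]_{-k_2}$ into $[D]_0,[1]_{-k_1-k_2}$ on the full invariant; iterating over the remaining markings trades every $[1]_{-k_i}$ for $[D]_0$ and reduces to the base case.

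The main obstacle is the bookkeeping in the inductive step: unlike in \cite{You22}*{Proposition 4.6}, the divisor $D$ is not nef, so intermediate curve classes $\beta_2$ can satisfy $D\cdot\beta_2<0$, and the auxiliary invariants that appear after applying TRR are precisely the ones covered by Lemmas \ref{lemma-several-neg-0}, \ref{lemma-several-neg-0-} and Propositions \ref{prop-several-neg-1}, \ref{prop-several-neg-4} — so one must verify that every case that arises in the splitting is already established (possibly invoking an outer induction on $D\cdot\beta$ or on $\beta$ in the Mori cone to avoid circularity). Checking that this web of reductions closes up without a gap, and that the multiplicity $(D\cdot\beta)^{l}$ from the rubber analysis in Lemma \ref{lemma-several-neg-0-} propagates correctly through the TRR, is the part that requires genuine care; the rest is parallel to the proofs already given. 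I leave the remaining combinatorial details, which are entirely analogous to those in the proof of Proposition \ref{prop-several-neg-1}, to the reader.
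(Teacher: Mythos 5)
Your proposal takes a genuinely different route from the paper, and as written it has a gap at the base case. The paper does not redo the induction of Proposition \ref{prop-several-neg-1} at all: it first shows, for any $a\geq 0$ and a positive integer $b$, that
\[
\langle  [1]_{-k_1},\cdots, [1]_{-k_l}, [\gamma]_{k_{l+1}} \bar{\psi}^a\rangle_{0,l+1,\beta}^{(X,D)}=\langle  [1]_{-k_1},\cdots, [1]_{-k_l}, [1]_{k_{l+1}-b},[\gamma]_{b} \bar{\psi}^{a+1}\rangle_{0,l+2,\beta}^{(X,D)},
\]
by noting that in both invariants all markings sit in a single rubber, the class $\mathfrak c_\Gamma$ has degree $2l-2$, and the extra identity insertion on the right cancels against the extra $\bar\psi$ via the string equation. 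Since $k_{l+1}-b<0$ and $b>0$, the right-hand side has last contact order positive, so Proposition \ref{prop-several-neg-4} applies directly and the string equation removes the resulting $[1]_0$ and one $\bar\psi$. This reduction to the already-proved Proposition \ref{prop-several-neg-4} is a few lines, needs no new base case, and automatically avoids the circularity you worry about in your last paragraph.

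Your inductive step is essentially sound: the TRR manipulation only touches the second factor $\langle \tilde T_{-i}^k,\prod_{j\in S_2}[1]_{-k_j},[1]_{-k_1},[1]_{-k_2}\rangle$, and the exchange of $[1]_{-k_1},[1]_{-k_2}$ for $[1]_{-k_1-k_2},[D]_0$ there is sign-agnostic by Lemmas \ref{lemma-several-neg-0} and \ref{lemma-several-neg-0-}, exactly as in the proof of Proposition \ref{prop-several-neg-1}. The problem is your base case $l=1$ with $a\geq 1$. You invoke the computation of Lemma \ref{lemma-several-neg-0-} (via \cite{FWY}*{Example 5.6}), but that argument is specific to $a=0$: the multiplicity $d=D\cdot\beta$ arises from cancelling the $d_e\bar\psi_e$ factors of $\mathfrak c_\Gamma$ against an identity insertion by a string-equation manipulation, and the match with the right-hand side is then a bare divisor equation. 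With $\bar\psi^a$, $a\geq 1$, on the $\gamma$-marking, the divisor equation acquires a descendant correction term, and the string-equation cancellation produces extra terms in which $\bar\psi^a$ drops degree, so "multiplicity $D\cdot\beta$, matching via the divisor equation" is not a proof; moreover your claim that only fiber curve classes survive is unjustified here, since the vanishing-by-string-equation argument used elsewhere requires all rubber insertions to be identities, which fails because $\gamma\bar\psi^a$ also sits in the rubber. To close the gap along your route you would need a cycle-level comparison of the two graph sums after pushing the rubber cycles to $\overline M(D)$ and invoking triviality of the genus-zero double ramification cycle (as in the base case of Proposition \ref{prop-several-neg-1}, using the rigidification lemma for the $[D]_0$ insertion), or simply replace the base case by the paper's auxiliary-marking trick above.
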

\begin{proof}

For $a\geq 0$, we have the following identity:
\[
\langle  [1]_{-k_1},\cdots, [1]_{-k_l}, [\gamma]_{k_{l+1}} \bar{\psi}^a\rangle_{0,l+1,\beta}^{(X,D)}=\langle  [1]_{-k_1},\cdots, [1]_{-k_l}, [1]_{k_{l+1}-b},[\gamma]_{b} \bar{\psi}^{a+1}\rangle_{0,l+2,\beta}^{(X,D)},
\]
for a positive integer $b$ (note that $k_{l+1}-b<0$). This is because of the following. Both invariants are genus zero relative invariants with $(l+1)$ markings of negative contact orders. Furthermore, in both cases, there is only one rubber moduli, and all markings are distributed to the rubber. On the RHS, there are $(l+1)$ markings that have insertion $[1]$ (after pushing forward to the moduli space of stable maps to $D$.) The class $\mathfrak c_{\Gamma}$ is of degree $2l-2$. When applying the string equation, at least one of $[1]$ should cancel with the $\psi$-class from the last marking. On the RHS, we have one extra $[1]$ that cancels with the one extra $\bar{\psi}$. Therefore, the invariants are the same.

By Proposition \ref{prop-several-neg-4}, we have
\begin{align*}
&\langle  [1]_{-k_1},\cdots, [1]_{-k_l}, [\gamma]_{k_{l+1}} \bar{\psi}^a\rangle_{0,l+1,\beta}^{(X,D)}\\
=&\langle  [1]_{-k_1},\cdots, [1]_{-k_l}, [1]_{k_{l+1}-b},[\gamma]_{b} \bar{\psi}^{a+1}\rangle_{0,l+2,\beta}^{(X,D)}\\
=&\langle  [D]_{0},\cdots, [D]_{0}, [1]_{0},[\gamma]_{D\cdot\beta} \bar{\psi}^{a+1}\rangle_{0,l+2,\beta}^{(X,D)}\\
=& \langle  [D]_{0},\cdots, [D]_{0}, [\gamma]_{D\cdot\beta} \bar{\psi}^{a}\rangle_{0,l+1,\beta}^{(X,D)}.
\end{align*}

\end{proof}

We write
\begin{equation}
\begin{split}
g(y):=\sum_{d\geq 2} g_d(y),
\end{split}
\end{equation}
where 
\begin{equation*}
\begin{split}
g_d(y):=\sum_{\substack{\beta\in \NE(X): D\cdot\beta=d}}\left[\sum_{l\geq 0,\beta^\prime\in \NE(X)}\frac{1}{l!}\langle [\on{pt}]\psi^{d-2},\tau_D,\ldots,\tau_D\rangle_{0,1+l,\beta^\prime}^X y^{\beta^\prime}\right]_{y^\beta}y^\beta (d-1)!.
\end{split}
\end{equation*}

Then we write the relative mirror map (\ref{mirror-map}) as
\begin{equation}
\begin{split}
\tau(y):=\sum_{i=1}^{\mathrm r}p_i \log y_i +\sum_{d\geq 2}g_d(y)[1]_{-d}.
\end{split}
\end{equation}

Recall that
\begin{align*}
J_{(X,D)}(\tau(y),z)=\sum_{\beta\in\NE(X),l\geq 0}\frac{1}{l!}\langle \sum_{d\geq 2}g_d(y)[1]_{-d},\ldots,\sum_{d\geq 2}g_d(y)[1]_{-d}, [\gamma]_{k_0} \bar{\psi}^a\rangle_{0,l+1,\beta}^{(X,D)}[\gamma^\vee]_{-k_0} y^\beta.
\end{align*}

We consider the ambient $J$-function, that is, the class $\gamma$ is pullback from $H^*(X)$. The invariants are computed in Propostion \ref{prop-several-neg-1}, Proposition \ref{prop-several-neg-3} and Proposition \ref{prop-several-neg-4}.
\begin{itemize}
\item When $D\cdot\beta\geq 0$, $k_0>0$, we have:
\[
\langle  [1]_{-k_1},\cdots, [1]_{-k_l}, [\gamma]_{k_{0}} \bar{\psi}^a\rangle_{0,l+1,\beta}^{(X,D)}=\langle  [D]_0,\cdots, [D]_0, [\gamma]_{D\cdot \beta} \bar{\psi}^a\rangle_{0,l+1,\beta}^{(X,D)}.
\]
\item When $D\cdot\beta<0$, $k_0<0$, we have:
\[
\langle  [1]_{-k_1},\cdots, [1]_{-k_l}, [\gamma]_{k_{0}} \bar{\psi}^a\rangle_{0,l+1,\beta}^{(X,D)}=\langle  [D]_0,\cdots, [D]_0, [\gamma]_{D\cdot \beta} \bar{\psi}^a\rangle_{0,l+1,\beta}^{(X,D)}.
\]
\item When $D\cdot\beta<0$, $k_0\geq 0$, we have:
\[
\langle  [1]_{-k_1},\cdots, [1]_{-k_l}, [\gamma]_{k_{0}} \bar{\psi}^{a}\rangle_{0,l+1,\beta}^{(X,D)}=\langle  [D]_0,\cdots, [D]_0, [1]_0, [\gamma]_{D\cdot \beta} \bar{\psi}^{a}\rangle_{0,l+1,\beta}^{(X,D)}.
\]
\end{itemize}

We consider the product of the $J$-function with $[1]_{-D\cdot\beta+k_0}=[1]_{\sum_{i=1}^l k_i}$.  When $D\cdot\beta\geq 0, k_0>0$ or $D\cdot\beta<0, k_0<0$, we have:
\begin{align*}
&\langle  [1]_{-k_1},\cdots, [1]_{-k_l}, [\gamma]_{k_{0}} \bar{\psi}^a\rangle_{0,l+1,\beta}^{(X,D)}[\gamma^\vee]_{-k_0}\cdot [1]_{-D\cdot\beta+k_0}\\
=&\langle  [D]_0,\cdots, [D]_0, [\gamma]_{D\cdot \beta} \bar{\psi}^a\rangle_{0,l+1,\beta}^{(X,D)}[\gamma^\vee]_{-D\cdot\beta};
\end{align*}
when $D\cdot\beta<0$, $k_0\geq 0$, we have:
\begin{align*}
&\langle  [1]_{-k_1},\cdots, [1]_{-k_l}, [\gamma]_{k_{0}} \bar{\psi}^a\rangle_{0,l+1,\beta}^{(X,D)}[\gamma^\vee]_{-k_0}\cdot [1]_{-D\cdot\beta+k_0}\\
=&\langle  [D]_0,\cdots, [D]_0, [1]_0,[\gamma]_{D\cdot \beta} \bar{\psi}^a\rangle_{0,l+1,\beta}^{(X,D)}[\gamma^\vee\cup D]_{-D\cdot\beta}\\
=&\langle  [D]_0,\cdots, [D]_0, [\gamma]_{D\cdot \beta} \bar{\psi}^{a-1}\rangle_{0,l,\beta}^{(X,D)}[\gamma^\vee\cup D]_{-D\cdot\beta},
\end{align*}
where, as computed in \cite{FWY}*{Section 7.1},
\begin{align*}
[\gamma^\vee]_{-k_0}\cdot [1]_{-D\cdot\beta+k_0}=
\left\{
\begin{array}{cc}
\,  [\gamma^\vee]_{-D\cdot\beta} & \text{if }, D\cdot\beta\geq 0, k_0>0 \text{ or }D\cdot\beta<0, k_0<0\\ 
\, [\gamma^\vee\cup D]_{-D\cdot\beta} & \text{if } D\cdot\beta<0, k_0\geq 0.
\end{array}
\right.
\end{align*}

Hence, the insertion $\sum_{d\geq 2}g_d(y)[1]_{-d}$ has the same effect as the insertion $g(y)[D]_0$. The divisor class $[D]_0$ can be removed using the divisor equation. 
Hence, we can write the relative mirror map (\ref{mirror-map}) as
\begin{align}\label{iden-rel-mirror-map}
\sum_{i=1}^{\mathrm r} p_i\log q_i=\sum_{i=1}^{\mathrm r} p_i\log y_i+g(y)D.
\end{align}

Now we consider the invariants that will appear in the theta function $\vartheta_1$. We have an extra marking with insertion $[1]_1$.
\begin{proposition}\label{prop-several-neg-2}
Let $(X,D)$ be a smooth pair. Given a curve class $\beta$, let $k_i\in \mathbb Z_{>0}$ for $i\in \{1,\ldots, l+1\}$ such that
\[
D\cdot \beta-1=k_{l+1}-\sum_{i=1}^l k_i\neq 0.
\]
Then we have the following relation.
\begin{equation}\label{identity-several-neg-2}
\begin{split}
&\langle [1]_1, [1]_{-k_1},\cdots, [1]_{-k_l},  [\on{pt}]_{k_{l+1}} \rangle_{0,l+2,\beta}^{(X,D)}\\
=&
\left\{
\begin{array}{cc}
(D\cdot \beta-1)^l\langle  [1]_1, [\on{pt}]_{D\cdot \beta-1}  \rangle_{0,2,\beta}^{(X,D)} & \text{if } D\cdot\beta>1\\
0 & \text{if }D\cdot\beta \leq 1 \text{ and }D\cdot\beta\neq 0.
\end{array}
\right.
\end{split}
\end{equation}

\end{proposition}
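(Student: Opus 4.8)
The plan is to prove (\ref{identity-several-neg-2}) by reducing it to the two‑point invariant $\langle [1]_1,[\on{pt}]_{D\cdot\beta-1}\rangle^{(X,D)}_{0,2,\beta}$, in the spirit of Propositions \ref{prop-several-neg-1}, \ref{prop-several-neg-3} and \ref{prop-several-neg-4}. Throughout, the marking carrying $[1]_1$ has positive contact order, so in every degeneration‑to‑the‑normal‑cone or rubber argument below it remains on the main $(X,D)$‑component and is never distributed to a rubber moduli space; since its insertion is the identity, it introduces no $\bar{\psi}$‑ or string‑equation subtleties and simply comes along as a spectator. The reduction proceeds in two stages: (i) trade every insertion $[1]_{-k_i}$ for $[D]_0$, arriving at $\langle [1]_1,[D]_0,\ldots,[D]_0,[\on{pt}]_{D\cdot\beta-1}\rangle^{(X,D)}_{0,l+2,\beta}$ with $l$ copies of $[D]_0$; (ii) peel off the $l$ divisor insertions $[D]_0$ one at a time, each peeling contributing the multiplicity $D\cdot\beta-1$.

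\textbf{Base case $l=0$.} Here the hypothesis $k_{l+1}-\sum_i k_i\neq 0$ forces $D\cdot\beta\neq 1$. If $D\cdot\beta>1$ the asserted identity is an equality of one invariant with itself, so there is nothing to prove. If $D\cdot\beta<0$ then $k_1=D\cdot\beta-1<0$ and the curve class forces a component of the source into $D$; running the graph‑sum definition of \cite{FWY} and pushing the relevant rubber moduli to $\overline{M}(D)$ as in the proof of Lemma \ref{lemma-several-neg-0} — using the point constraint at the $[\on{pt}]$‑marking, a virtual dimension count, and the triviality of the genus zero double ramification cycle — shows the invariant vanishes, matching the right‑hand side of (\ref{identity-several-neg-2}).

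\textbf{Reduction and the multiplicity.} For the first stage, observe that when $D\cdot\beta>1$ one automatically has $k_{l+1}=D\cdot\beta-1+\sum_{i=1}^l k_i>0$, so each $[1]_{-k_i}$ may be replaced by $[D]_0$ by the argument of Proposition \ref{prop-several-neg-1}, which goes through verbatim with the extra spectator $[1]_1$ present; when $D\cdot\beta<0$ one uses instead Proposition \ref{prop-several-neg-3} or Proposition \ref{prop-several-neg-4} according to the sign of $k_{l+1}$, absorbing by the string equation the auxiliary $[1]_0$‑insertion produced in the latter case. Either way the left‑hand side of (\ref{identity-several-neg-2}) becomes $\langle [1]_1,[D]_0,\ldots,[D]_0,[\on{pt}]_{D\cdot\beta-1}\rangle^{(X,D)}_{0,l+2,\beta}$. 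For the second stage, apply the relative divisor equation for the relative divisor $D$ once for each $[D]_0$: in each step the naive factor $D\cdot\beta$ is corrected by contributions from the two relative markings $[1]_1$ and $[\on{pt}]_{D\cdot\beta-1}$; the contribution of the point‑class marking vanishes since $\iota^*D\cup[\on{pt}]=0$ in $H^*(D)$, while the contact‑order‑$1$ marking $[1]_1$ accounts for a correction of $-1$, so the net multiplicity is $D\cdot\beta-1$. After $l$ steps this yields $(D\cdot\beta-1)^l\langle [1]_1,[\on{pt}]_{D\cdot\beta-1}\rangle^{(X,D)}_{0,2,\beta}$, and when $D\cdot\beta<0$ the surviving two‑point invariant vanishes by the base case, giving $0$ as claimed.

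\textbf{Main obstacle.} The delicate point is pinning down the precise form of the relative divisor equation so that the contact‑order‑$1$ marking $[1]_1$ produces exactly the shift $D\cdot\beta\mapsto D\cdot\beta-1$ and nothing more — one must track the product structure on the state space $\mathfrak H$ recorded in \cite{FWY}*{Section 7.1}, which can convert an identity insertion into a $\cup D$ insertion — and in checking that the sign regimes ($k_{l+1}>0$, $=0$, $<0$, intertwined with $D\cdot\beta>1$ versus $D\cdot\beta<0$) are all covered by the corresponding propositions. A further genuinely new point, absent when $D$ is nef, is that the rubber classes appearing here can have negative intersection with $D$ through $N_{D/X}$, and one must verify that the extra rubber contributions this allows still vanish or cancel. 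The combinatorial bookkeeping of the multiplicities is modeled on \cite{You22}; if the divisor‑equation route proves awkward, an alternative is to split off trivial markings to create a $\bar{\psi}$‑class and then run a topological recursion relation as in the proof of Proposition \ref{prop-several-neg-3}, and I expect the two routes to be of comparable difficulty.
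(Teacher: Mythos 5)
Your two-step reduction breaks down at its central point, the multiplicity. If Propositions \ref{prop-several-neg-1}, \ref{prop-several-neg-3} and \ref{prop-several-neg-4} really applied ``verbatim'' with the extra marking $[1]_1$ as a spectator, you would arrive at $\langle [1]_1,[D]_0,\ldots,[D]_0,[\on{pt}]_{D\cdot\beta-1}\rangle^{(X,D)}_{0,l+2,\beta}$ with multiplicity one; and since for $D\cdot\beta>1$ this is an ordinary relative invariant with no descendants, the divisor equation holds in its standard form (factor $D\cdot\beta$ per interior $[D]_0$, with no corrections from relative markings --- this is exactly how the paper itself removes $[D]_0$ insertions in the remark after Lemma \ref{lemma-several-neg-0} and in the proof of Lemma \ref{lemma-several-neg-0-}, where relative markings are present), giving $(D\cdot\beta)^l\langle[1]_1,[\on{pt}]_{D\cdot\beta-1}\rangle^{(X,D)}_{0,2,\beta}$ rather than the asserted $(D\cdot\beta-1)^l\langle[1]_1,[\on{pt}]_{D\cdot\beta-1}\rangle^{(X,D)}_{0,2,\beta}$. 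You reconcile this by postulating a ``relative divisor equation'' in which the contact-order-one marking contributes a correction of $-1$ at each peeling; no such correction exists in the formalism of \cite{FWY} (nor in Jun Li's theory), nothing in your argument derives it, and it contradicts the paper's own use of the divisor equation. So at least one of your two steps is false, and in fact it is the spectator claim: the extra positive-contact marking $[1]_1$ changes the bookkeeping at the divisor where the rubber attaches --- the cohomology-weighted partition $\eta$ connecting the rubber to the $(X,D)$-component now has total contact order $D\cdot\beta-1$ instead of $D\cdot\beta$ --- and this shift, not a modified divisor equation, is the true source of the factor $(D\cdot\beta-1)^l$. As it stands, the two unjustified steps are engineered to compensate each other, which makes the argument circular.

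The paper's proof does not pass through $[D]_0$ insertions at all: it applies the graph-sum/degeneration definition of \cite{FWY} directly to the original invariant, observes that for $D\cdot\beta\neq 0$ the marking $[1]_1$ cannot lie on the rubber (it would kill the rubber integral) while the point marking and all negative-contact markings must, reduces the rubber curve class to fiber classes by the pushforward-to-$\overline{M}(D)$ and string-equation argument of Lemma \ref{lemma-several-neg-0}, and then reads off the multiplicity from the rubber calculus (the $d_e\bar{\psi}_e$ factors in $\mathfrak c_\Gamma$, as in Lemma \ref{lemma-several-neg-0-}, now with connecting multiplicity $D\cdot\beta-1$), the vanishing for $D\cdot\beta<0$ coming out of the same analysis. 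To salvage your route you would have to prove the corrected ``spectator'' identity with the shifted multiplicity --- but that is precisely the rubber computation above, so the detour through $[D]_0$ insertions and the divisor equation gains nothing and, as written, asserts two statements that are not true.
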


\begin{proof}
When $D$ is nef, this is \cite{You22}*{Proposition 4.7}. Now we remove the assumption that $D$ is nef. The proof without the nefness assumption is similar to the proof in Lemma \ref{lemma-several-neg-0}. If $D\cdot\beta\neq 0$, the first marking cannot be distributed to the rubber because it will make the rubber invariants equal zero. We also know that there is only one rubber moduli, and the last marking is distributed to this rubber. This implies that the degeneration formula is:
\begin{align*}
&\langle [1]_1, [1]_{-k_1},\cdots, [1]_{-k_l},  [\on{pt}]_{k_{l+1}} \rangle_{0,l+2,\beta}^{(X,D)}\\
=&\sum_{\mathfrak G\in \mathcal B_\Gamma}\frac{\prod_{e\in E}d_e}{|\Aut(\mathfrak G)|}\sum_\eta \langle [1]_1,\check{\eta}\rangle^{\bullet,\mathfrak c_\Gamma, (X,D)}\langle \eta,[1]_{k_1},\cdots, [1]_{k_l},| \, | [\on{pt}]_{k_{l+1}} \rangle^{\sim,\mathfrak c_{\Gamma}}. 
\end{align*}

The rest of the proof is similar to the proof of Proposition \ref{prop-several-neg-1}.
\end{proof}

When $D\cdot\beta=0$, all the markings are distributed to the rubber. Moreover, the rubber invariant will be zero unless $\pi_*(\beta)=0$, where $\pi: \mathbb P(\mathcal O_D\oplus N_D)\rightarrow D$ is the projection map. Then we must have $\beta=0$.
The degree zero invariants are already computed in \cite{You22} without assuming that $D$ is nef.
\begin{prop}[=\cite{You22}*{Proposition 4.9}]\label{prop-degree-zero}
\begin{align}\label{identity-zero-several-neg}
\langle [\on{pt}]_{k_{l+1}}, [1]_1,[1]_{-k_1},\cdots, [1]_{-k_l}\rangle_{0,l+2,0}^{(X,D)}=(-1)^{l-1},    
\end{align}
where $k_1, \ldots, k_l$ are positive integers and
\[
1+k_{l+1}=k_1+\cdots +k_l.
\]
\end{prop}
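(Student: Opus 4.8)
The plan is to use that the curve class is $\beta=0$, which localises the whole computation to a single fibre of $\mathbb P(N_D\oplus\mathcal O_D)$ and thereby makes any positivity hypothesis on $D$ irrelevant, and then to induct on the number $l$ of negative-contact markings.

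First I would unwind the graph-sum definition of \cite{FWY} when $\beta=0$. As in the discussion preceding the proposition, every one of the $l$ negative-contact markings is distributed to a rubber moduli over $D$; since the curve class on the $X$-side vanishes and the rubber stable map has $\pi_*(\beta)=0$, the invariant is computed by a single genus-zero rubber stable map into one fibre $\mathbb P^1=\mathbb P(N_D\oplus\mathcal O_D)|_x$, relative to $\{D_0,D_\infty\}$, of fibre degree $d:=1+k_{l+1}=\sum_{i=1}^l k_i$, with the relative markings $[\on{pt}]_{k_{l+1}}$ and $[1]_1$ at $D_0$ and the relative markings $[1]_{-k_i}$ at $D_\infty$, all modulo the fibrewise $\mathbb C^\times$-action. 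By \cite{FWY}*{Proposition 3.4} the cycle $\mathfrak c_\Gamma$ lives in $A_{\dim D}\big(\overline M_{0,l+2}\times D\big)$, and evaluating the $[\on{pt}]$-marking against the $D$-factor pins down $x$, so the invariant equals the integral over $\overline M_{0,l+2}$ of a codimension-$(l-1)$ class built only from $\bar\psi$-classes and the local normal-bundle data of $D_0$ and $D_\infty$ in the fibre. Nothing in this reduction sees whether $D$ or $-K_X$ is nef; this is the only point of difference from \cite{You22}*{Proposition 4.9}, whose proof otherwise applies verbatim.

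For the base case $l=1$, pushing the rubber moduli forward to $\overline M_{0,3}(D)$ and using the triviality of the genus-zero double ramification cycle \cite{JPPZ18}*{Theorem 2}, the integral collapses to a single point contribution whose multiplicity is controlled by $\mathfrak c_\Gamma$ as in \cite{FWY}*{Example 5.6}, giving the value $1=(-1)^{1-1}$. For the inductive step I would assume the formula for $l-1\ge1$ and apply the topological recursion relation in the rubber (equivalently the string equation, after pushing forward to $\overline M_{0,\bullet}(D)$, where all the $[1]_1$ and $[1]_{-k_i}$ become the identity class) to strip off one negative-contact marking; since the codimension of the class coming from $\mathfrak c_\Gamma$ grows by one with each additional negative marking, essentially one term survives, and balancing the edge multiplicities $d_e=k_i$ against it produces precisely a factor $-1$, so the value passes from $(-1)^{l-2}$ to $(-1)^{l-1}$. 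Iterating down to $l=1$ gives the claim; the small case $k_{l+1}=0$ (which forces $l=1$, $k_1=1$) is a separate and immediate check.

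The hard part will be exactly what it is in \cite{You22}: making the cycle $\mathfrak c_\Gamma$ with several negative-contact markings completely explicit, and checking that the string/TRR reduction produces the multiplicity $-1$ at each step rather than some other rational number. Once that combinatorial bookkeeping is done the rest is formal, and — because $\beta=0$ — it never uses the nef hypothesis, so Proposition \ref{prop-degree-zero} holds just as \cite{You22}*{Proposition 4.9} does.
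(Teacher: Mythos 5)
Your proposal matches the paper's treatment: the paper gives no independent proof of this statement, but simply observes that, since the curve class is zero, the computation of \cite{You22}*{Proposition 4.9} never invokes nefness of $D$ and hence applies verbatim, which is exactly your reduction. Your additional sketch of the internal argument (all markings forced into a single fibre-class rubber over a point of $D$, base case $l=1$, stripping off negative-contact markings by string/TRR) is a plausible reconstruction of the \cite{You22} computation and is consistent with it, with the key multiplicity bookkeeping deferred to that reference just as the paper defers it.
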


\subsection{The proper potential from the relative mirror theorem}

We can combine the calculations in the previous sections to obtain a formula for the proper potential $W:=\vartheta_1$ when $D$ is not nef. 
\begin{theorem}\label{thm-proper-potential}
Given a smooth log Calabi--Yau pair $(X,D)$, the proper LG potential is
\begin{align}\label{iden-proper-potential}
W:=x+\sum_{n=1}^{\infty}\sum_{\beta:D\cdot\beta=n+1}n\left\langle [1]_1,[\on{pt}]_{n}\right\rangle_{0,2,\beta}^{(X,D)}t^\beta x^{-n}=x\exp(g(y(q))),
\end{align}
where $y(q)$ is the inverse of the relative mirror map (\ref{iden-rel-mirror-map}) and 
\[
g(y)=\sum_{\substack{\beta\in \NE(X) \\ d=D\cdot\beta\geq 2}}\left[\sum_{l\geq 0,\beta^\prime\in \NE(X)}\frac{1}{l!}\langle [\on{pt}]\psi^{d-2},\tau_D,\ldots,\tau_D\rangle_{0,1+l,\beta^\prime}^X y^{\beta^\prime}\right]_{y^\beta}y^\beta (d-1)!.
\]
\end{theorem}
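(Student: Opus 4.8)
The plan is to extract the proper potential from the $S=\{1\}$-extended relative mirror theorem, Theorem \ref{thm-rel-I-cy-extended}, which identifies $J_{(X,D)}(\tau(y,x_1),z)$ with $I^S_{(X,D)}(y,x_1,\tau_D(y,z),z)$, and then to simplify using the identities of Section \ref{sec:iden-rel-GW}. Following \cite{You22}, $W=\vartheta_1$ is read off as the component of the $S$-extended relative $I$-function valued in $[1]_{-1}$; equivalently, it is the generating function of the two-point invariants $\langle[1]_1,[\on{pt}]_{D\cdot\beta-1}\rangle_{0,2,\beta}^{(X,D)}$ weighted by $D\cdot\beta-1$, which sit in the $[1]_{-n}$, $z^{-1}$, $x_1$-linear part of $J_{(X,D)}$ at the mirror-map argument. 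Using $q^\beta=t^\beta x^{-D\cdot\beta}$ I would first rephrase the target as the identity of power series in $q$:
\[
1+\sum_{\beta\in\NE(X),\,D\cdot\beta\geq2}(D\cdot\beta-1)\,\langle[1]_1,[\on{pt}]_{D\cdot\beta-1}\rangle_{0,2,\beta}^{(X,D)}\,q^\beta=\exp\bigl(g(y(q))\bigr).
\]

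Next I would unwind the left-hand side through the mirror theorem. Expanding $J_{(X,D)}$ at the mirror-map argument $\tau(y,x_1)=\sum_i p_i\log y_i+x_1[1]_1+\sum_{d\geq2}g_d(y)[1]_{-d}$ and keeping the part linear in $x_1$ produces a sum over $l\geq0$ of invariants $\langle[1]_1,[1]_{-k_1},\ldots,[1]_{-k_l},[\on{pt}]_n\rangle_{0,l+2,\beta_0}^{(X,D)}$, weighted by $1/l!$ and by the $y$-series coefficients of $g_{k_1}(y)\cdots g_{k_l}(y)$, with the divisor insertions $\sum_i p_i\log y_i$ removed by the divisor equation — this, together with the relative mirror map (\ref{iden-rel-mirror-map}), is exactly what converts the $y$-series into a $q$-series. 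The key step is then to evaluate these invariants via Section \ref{sec:iden-rel-GW}: by Proposition \ref{prop-several-neg-2}, for the moduli curve class $\beta_0$ with $D\cdot\beta_0>1$ one has $\langle[1]_1,[1]_{-k_1},\ldots,[1]_{-k_l},[\on{pt}]_n\rangle_{0,l+2,\beta_0}^{(X,D)}=(D\cdot\beta_0-1)^l\langle[1]_1,[\on{pt}]_{D\cdot\beta_0-1}\rangle_{0,2,\beta_0}^{(X,D)}$, it vanishes for $0<D\cdot\beta_0\leq1$, and for $\beta_0=0$ Proposition \ref{prop-degree-zero} gives the value $(-1)^{l-1}$. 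Propositions \ref{prop-several-neg-1}, \ref{prop-several-neg-3} and \ref{prop-several-neg-4} are precisely what make these evaluations, and the reduction of $\sum_{d}g_d(y)[1]_{-d}$ to the divisor insertion $g(y)[D]_0$, go through without the nefness hypothesis — this is where the present argument genuinely departs from \cite{You22}.

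Substituting these values, the contribution of a fixed moduli class $\beta_0$ with $D\cdot\beta_0>1$ becomes $y^{\beta_0}(D\cdot\beta_0-1)\langle[1]_1,[\on{pt}]_{D\cdot\beta_0-1}\rangle_{0,2,\beta_0}^{(X,D)}$ times $\sum_{l\geq0}\frac1{l!}(D\cdot\beta_0-1)^l\bigl(\sum_{d\geq2}g_d(y)\bigr)^l=\exp\bigl((D\cdot\beta_0-1)g(y)\bigr)$, while the $\beta_0=0$ pieces, with the signs from Proposition \ref{prop-degree-zero}, resum to $1-e^{-g(y)}$; combining with the $z+\tau$ term of the $J$-function and then passing to the flat coordinates $q$ via $\sum_i p_i\log q_i=\sum_i p_i\log y_i+g(y)D$, under which $q^\beta=y^\beta e^{(D\cdot\beta)g(y)}$, all the factors of $e^{g(y)}$ reorganize into the single exponential $\exp(g(y(q)))$, giving $W=x\exp(g(y(q)))$.

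The main obstacle I anticipate is precisely this final resummation: one must show that the threefold bookkeeping — the weights $(D\cdot\beta_0-1)^l/l!$ from Proposition \ref{prop-several-neg-2}, the multinomial expansion of $\bigl(\sum_{d\geq2}g_d(y)\bigr)^l$ into its curve-class components, and the exponential change of variables $y\mapsto q$ determined by the relative mirror map — conspire into the clean exponential $\exp(g(y(q)))$. This is an application of the Lagrange inversion theorem and the Bell-polynomial identities of Section \ref{sec:bell-poly}. Two secondary points require care: the $\beta_0=0$ terms, with their alternating signs, must be shown to rebuild exactly the normalization $W=x+O(t)$; and the moduli classes $\beta_0$ with $D\cdot\beta_0=1$, which fall outside the range of Proposition \ref{prop-several-neg-2}, must be checked to contribute nothing, consistently with $g(y)$ having no $g_1$-term.
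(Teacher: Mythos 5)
Your overall route---the $S=\{1\}$-extended mirror theorem (Theorem \ref{thm-rel-I-cy-extended}), expansion of $J_{(X,D)}$ at $\tau(y,x_1)$, evaluation of the resulting invariants with negative contact orders via Propositions \ref{prop-several-neg-1}--\ref{prop-degree-zero}, and the change of variables (\ref{iden-rel-mirror-map})---is the same as the paper's, but there is a genuine gap at the decisive step. The $x_1$-linear, $z^{-1}$, $[1]_{-n}$-valued part of $J_{(X,D)}(\tau(y,x_1),z)$ contains the \emph{unweighted} two-point invariants $\langle[1]_1,[\on{pt}]_{n}\rangle_{0,2,\beta}^{(X,D)}$; the weight $n=D\cdot\beta-1$ entering $\vartheta_1$ is not produced by the coefficient extraction, so your claimed contribution $y^{\beta_0}(D\cdot\beta_0-1)\langle[1]_1,[\on{pt}]_{D\cdot\beta_0-1}\rangle_{0,2,\beta_0}^{(X,D)}\exp\bigl((D\cdot\beta_0-1)g(y)\bigr)$ carries an unexplained factor $(D\cdot\beta_0-1)$. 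After Propositions \ref{prop-several-neg-2} and \ref{prop-degree-zero} and the change of variables, the $J$-side coefficient is $e^{-g(y)}\sum_\beta\langle[1]_1,[\on{pt}]_{D\cdot\beta-1}\rangle_{0,2,\beta}^{(X,D)}q^\beta-e^{-g(y)}+1$, an expression in the still-unknown two-point invariants; it does not by itself ``reorganize'' into $\exp(g(y(q)))$. Moreover, you never use the other half of the mirror theorem: the paper computes the matching $x_1z^{-1}$-coefficient of the extended $I$-function explicitly (the hypergeometric factor giving $\sum_{d=D\cdot\beta\geq 2}[\cdots]_{y^\beta}\,y^\beta\,\frac{d!}{d-1}$), equates the two sides to get (\ref{iden-J-I-coeff}), and only then produces the weight $n$ and the exponential by applying the divisor-type operator $\Delta_D=\sum_i m_i y_i\frac{\partial}{\partial y_i}-1$ to that identity. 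Without the $I$-side input and this (or an equivalent) manipulation, nothing ties the two-point invariants to $g(y)$.

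Your appeal to Section \ref{sec:bell-poly} does not close this gap: in the paper, the Lagrange inversion and Bell-polynomial identities are used \emph{after} Theorem \ref{thm-proper-potential} is established, to compare with the classical period (or, run backward, to deduce Theorem \ref{thm-proper-potential} from Theorem \ref{thm-classical-quantum}); they are not what performs the resummation inside the mirror-theorem argument. If you intend that backward route, you would need Theorem \ref{thm-classical-quantum} and the identification $[W^k]_{x^0}=[W^k]_{\vartheta_0}$ as inputs and would not need most of your $J$-expansion or Propositions \ref{prop-several-neg-2} and \ref{prop-degree-zero}; as written, the proposal mixes the two strategies and completes neither.
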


The remainder of this section is to prove Formula (\ref{iden-proper-potential}). Using the identities in Section \ref{sec:iden-rel-GW}, we can compute the proper potential $W$. The computation is similar to that of \cite{You22}. We will emphasise the difference: curve class $\beta$ with $D\cdot\beta<0$ and the mirror map in $D$.

The $I$-function is the $S$-extended $I$-function (\ref{rel-I-func-extended}) with $S=\{1\}$.

We extract the sum over the coefficients of $x_1z^{-1}$ of the $J$-function and the $I$-function that takes values in $[1]_{n}$ for $n\geq 1$. The coefficient of the $J$-function is
\begin{align*}
&\sum_{\beta: D\cdot \beta\geq 1,n\geq 1}\sum_{l\geq 0}\langle [1]_1,\tau(y),\cdots, \tau(y),[\on{pt}]_{n}\rangle_{0,\beta,l+2}^{(X,D)}\frac{1}{l!} \\
 +& \sum_{\beta: D\cdot \beta=0}\sum_{n\geq 1, l>0}\langle[1]_1,\tau(y),\cdots,\tau(y),[\on{pt}]_n \rangle_{0,\beta,l+2}^{(X,D)}\frac{1}{l!}. 
\end{align*}
Using Proposition \ref{prop-several-neg-2} and Proposition \ref{prop-degree-zero}, it becomes
\[
\sum_{\beta: D\cdot \beta=n+1,n\geq 1}\langle [1]_1,[\on{pt}]_{n}\rangle_{0,\beta,2}^{(X,D)}y^\beta\exp\left(g(y)(D\cdot\beta-1)\right)  -\exp\left(-g(y)\right)+1.
\]
Using the change of variables
\[
\sum_{i=1}^{\mathrm r}\log q_i=\sum_{i=1}^{\mathrm r}\log y_i+g(y)D,
\]
we can write it as follows:
\[
\exp\left(-g(y)\right)\sum_{\beta: D\cdot \beta=n+1,n\geq 1}\langle [1]_1,[\on{pt}]_{n}\rangle_{0,\beta,2}^{(X,D)}q^\beta -\exp\left(-g(y)\right)+1.
\]

The corresponding coefficient of the $I$-function is
\[
\sum_{\substack{\beta\in \NE(X) \\ d=D\cdot\beta\geq 2}}\left[\sum_{l\geq 0,\beta^\prime\in \NE(X)}\frac{1}{l!}\langle [\on{pt}]\psi^{d-2},\tau_D,\ldots,\tau_D\rangle_{0,1+l,\beta^\prime}^X y^{\beta^\prime}\right]_{y^\beta}y^\beta \frac{(d)!}{d-1}.
\]

We have the following relation from the relative mirror theorem:
\begin{equation}\label{iden-J-I-coeff}
\begin{split} 
\exp\left(-g(y)\right)\sum_{\beta: D\cdot \beta=n+1,n\geq 1}\langle [1]_1,[\on{pt}]_{n}\rangle_{0,\beta,2}^{(X,D)}q^\beta  -\exp\left(-g(y)\right)+1\\
=\sum_{\substack{\beta\in \NE(X) \\ d=D\cdot\beta\geq 2}}\left[\sum_{l\geq 0,\beta^\prime\in \NE(X)}\frac{1}{l!}\langle [\on{pt}]\psi^{d-2},\tau_D,\ldots,\tau_D\rangle_{0,1+l,\beta^\prime}^X y^{\beta^\prime}\right]_{y^\beta}y^\beta \frac{(d)!}{d-1}
\end{split}
\end{equation}

Write $D=\sum_{i=1}^{\mathrm r} m_ip_i$ for some $m_i\in \mathbb Z_{\geq 0}$. We apply the differential operator
\[
\Delta_D=\sum_{i=1}^{\mathrm r}m_iy_i\frac{\partial}{\partial y_i}-1
\]
to (\ref{iden-J-I-coeff}), we have
\begin{align*}
&\left(\sum_{\substack{\beta\in \NE(X) \\ d=D\cdot\beta\geq 2}}\left[\sum_{l\geq 0,\beta^\prime\in \NE(X)}\frac{1}{l!}\langle [\on{pt}]\psi^{d-2},\tau_D,\ldots,\tau_D\rangle_{0,1+l,\beta^\prime}^X y^{\beta^\prime}\right]_{y^\beta}y^\beta (d)!\right)\\
&\cdot\exp(-g(y))\left(1+\sum_{n=1}^{\infty}\sum_{\beta:D\cdot\beta=n+1}n\left\langle [1]_1,[\on{pt}]_{n}\right\rangle_{0,2,\beta}^{(X,D)}q^\beta\right)\\
=&\sum_{\substack{\beta\in \NE(X) \\ d=D\cdot\beta\geq 2}}\left[\sum_{l\geq 0,\beta^\prime\in \NE(X)}\frac{1}{l!}\langle [\on{pt}]\psi^{d-2},\tau_D,\ldots,\tau_D\rangle_{0,1+l,\beta^\prime}^X y^{\beta^\prime}\right]_{y^\beta}y^\beta (d)!.
\end{align*}

Therefore, we have
\[
1+\sum_{n=1}^{\infty}\sum_{\beta:D\cdot\beta=n+1}n\left\langle [1]_1,[\on{pt}]_{n}\right\rangle_{0,2,\beta}^{(X,D)}q^\beta=\exp(g(y(q))).
\]

\begin{remark}
The main difference from the case where $D$ is nef is that $g(y)$ is defined by absolute invariants of $X$ with the mirror map that includes contributions from certain curve countings in $D$. The difference here is also the same as the difference between the regularized quantum periods and the classical periods in Theorem \ref{thm-classical-quantum}.
\end{remark}

\section{The Lagrange inversion and Bell polynomial identities}\label{sec:bell-poly}

In this section, we use the Lagrange inversion for Laurent series and Bell polynomial identities, which were also studied in \cite{GRZZ} and \cite{BGL} to show that the proper potential and the quantum period with the mirror map in $D$ are equivalent data.

Recall that the proper potential is obtained in Theorem \ref{thm-proper-potential}:
\begin{align*}
W:=x+\sum_{n=1}^{\infty}\sum_{\beta:D\cdot\beta=n+1}n\left\langle [1]_1,[\on{pt}]_{n}\right\rangle_{0,2,\beta}^{(X,D)}t^\beta x^{-n}=x\exp(g(y(q))),
\end{align*}
where $W$ is a function of $q$ under the identification $q^\beta=t^\beta x^{-D\cdot\beta}$. We can set $t=1$ and think of it as a function of $x$. Now we make a change of variables $x\rightarrow x^{-1}$, then we can write 
\begin{align*}
W:=x^{-1}+\sum_{n=1}^{\infty}\sum_{\beta:D\cdot\beta=n+1}n\left\langle [1]_1,[\on{pt}]_{n}\right\rangle_{0,2,\beta}^{(X,D)} x^{n}=x^{-1}\exp(g(y(x))).
\end{align*}

We also set $\omega=y^{-1}$. Then, the relative mirror map (\ref{iden-rel-mirror-map}) becomes
\[
x=\omega^{-1}\exp \left( g(y)\right).
\]
\[
W(\omega^{-1}\exp \left( g(y)\right))=\omega.
\]

We need to use the following Lagrange inversion theorem.
\begin{lemma}
Let $$f(x)=x^{-1}+\sum_{k\geq 0}f_k x^k$$ be a Laurent series with only simple poles. Let $g(\omega)$ be the composition inverse: $f(g(\omega))=\omega$. Then $g(\omega)$ is a power series in $\omega^{-1}$ given by
\[
g(\omega)=\sum_{k>0}\frac{\omega^{-k}}{k}[f^k]_{x^{-1}}.
\]
\end{lemma}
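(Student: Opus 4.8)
The plan is to prove this as a special case of the classical Lagrange--Bürmann inversion formula, adapted to Laurent series with a simple pole at the origin. First I would reduce to the standard setting by a substitution: writing $u = x^{-1}$ and $v = \omega^{-1}$, the series $f(x) = x^{-1} + \sum_{k\geq 0} f_k x^k$ becomes $f = u(1 + \sum_{k\geq 0} f_k u^{-k-1})^{?}$ — more precisely $f(x) = u \cdot \phi(u)$ where $\phi(u) = 1 + f_0 u^{-1} + f_1 u^{-2} + \cdots$ is a power series in $u^{-1}$ with $\phi(0)=1$ (interpreting ``$u\to\infty$'' as the relevant limit, or equivalently working in the completed local ring $\mathbb{C}[[u^{-1}]]$ and treating $u$ as the uniformizer's inverse). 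Since $\phi$ is invertible as a formal power series in $u^{-1}$, the equation $f(g(\omega)) = \omega$ is exactly of the form solved by Lagrange inversion: $g$ is determined as a series in $v = \omega^{-1}$.

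The key computation is the residue/coefficient extraction. The classical Lagrange inversion theorem states that if $w = f(x)$ with $f(x) = x/\varphi(x)$ for a unit $\varphi$, then $x = g(w)$ satisfies $[w^n] H(g(w)) = \frac{1}{n}[x^{n-1}]\big(H'(x)\varphi(x)^n\big)$ for any formal series $H$. Applying this with $H(x) = x$ (so $H' = 1$) gives $[w^n] g(w) = \frac{1}{n}[x^{n-1}]\varphi(x)^n$. Translating back through $u = x^{-1}$, $v = w^{-1}$, one has $\varphi(x)^n = (x\cdot f(x))^n \cdot$ (bookkeeping), and the coefficient $[x^{n-1}]$ of a series becomes a residue-type coefficient; carrying this through, $[x^{n-1}]\varphi(x)^n$ matches $[f(x)^k]_{x^{-1}}$ with $k = n$ after accounting for the pole. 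Thus $g(\omega) = \sum_{k>0} \frac{\omega^{-k}}{k}[f^k]_{x^{-1}}$, which is precisely the claimed formula. I would present this either by quoting the Laurent-series form of Lagrange inversion directly (e.g. as in Gessel's survey, or as used in \cite{GRZZ}) or by a short self-contained residue argument: since $\omega = f(x)$ and $\tfrac{d\omega}{dx} = f'(x)$, one has $[f^k]_{x^{-1}} = \operatorname{Res}_{x=0} f(x)^k \frac{dx}{x}$, and differentiating $g(f(x)) = x$ while changing variables in the residue integral yields the coefficient of $\omega^{-k}$ in $g$.

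The one subtlety worth flagging, which is the main obstacle, is making the formal framework precise: $f$ has a simple pole at $x=0$, so $f^k$ has a pole of order $k$, and one must verify that $g(\omega)$ genuinely is a \emph{power series in $\omega^{-1}$} with no constant or positive-$\omega$ terms, and that composition $f(g(\omega))$ is well-defined in the appropriate completed ring. The cleanest way to handle this is to note that $f(x) - x^{-1}$ is a power series in $x$ with no constant term issues for the inversion (the leading behaviour of $f$ near $x=0$ is $x^{-1}$, so $f$ is, up to the change of variable $x\mapsto x^{-1}$, a formal power series with nonvanishing derivative at the origin), hence $g$ exists uniquely as a series in $\omega^{-1}$ with leading term $\omega^{-1}$. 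Once this is set up, the coefficient formula follows from the standard argument with no further analytic input. I would then immediately apply the lemma with $f = W$ (as a Laurent series in $x$, after the change of variables $x \mapsto x^{-1}$), so that $g(\omega) = \omega^{-1}\sum_{k>0}\frac{\omega^{-k}}{k}[W^k]_{x^{-1}}$ — wait, more carefully, $g(\omega)$ recovers the inverse series, and combining with the expression $W = x^{-1}\exp(g(y(x)))$ from Theorem \ref{thm-proper-potential} lets one extract the coefficients $[W^k]_{x^{-1}}$ in terms of $g$, giving the promised equivalence between the proper potential and the quantum period together with the mirror map in $D$.
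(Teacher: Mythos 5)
The paper itself gives no proof of this lemma: it is quoted as a known Lagrange inversion theorem (in the spirit of the references \cite{GRZZ} and \cite{BGL} used for the companion Bell-polynomial identity), so your proposal supplies an argument where the paper supplies none. Your route — reduce to the classical Lagrange--B\"urmann formula — is correct in substance, and the ``bookkeeping'' you wave at is in fact exact and worth writing out, since it makes the proof two lines: set $\phi(x) := x f(x) = 1 + f_0 x + f_1 x^2 + \cdots$, a power series with unit constant term; then $\omega^{-1} = x/\phi(x)$ is precisely of the shape handled by classical Lagrange inversion in the variable $v=\omega^{-1}$, so $x = g(\omega)$ is a power series in $v$ with $[v^k]\,g = \tfrac{1}{k}[x^{k-1}]\phi(x)^k$, and $[x^{k-1}]\phi(x)^k = [x^{k-1}]\bigl(x^k f(x)^k\bigr) = [f^k]_{x^{-1}}$, which is the claimed formula. (Working with $\phi(x)$ as a series in $x$ is cleaner than your detour through $u=x^{-1}$ and a series in $u^{-1}$, though that framing is not wrong.) Two small corrections: in your alternative residue argument, $[f^k]_{x^{-1}} = \operatorname{Res}_{x=0} f(x)^k\,dx$, not $\operatorname{Res}_{x=0} f(x)^k\,\tfrac{dx}{x}$ (the latter is the $x^0$-coefficient), and the direct change-of-variables computation $[v^k]g = \operatorname{Res}_{x=0}\, x\,\phi(x)^{k+1}x^{-k-1}\,d\bigl(x/\phi(x)\bigr) = \tfrac1k [x^{k-1}]\phi^k$ (using $[x^{k-2}]\bigl(\phi^{k-1}\phi'\bigr) = \tfrac{k-1}{k}[x^{k-1}]\phi^k$) is the self-contained version you should write if you do not want to cite the classical statement. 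The hypothesis ``only simple poles'' just means the principal part of $f$ is exactly $x^{-1}$, which is what guarantees $g$ has leading term $\omega^{-1}$ and no constant or positive powers of $\omega$, as you note. Your final paragraph about feeding $f=W$ back into the equivalence of periods and potentials is garbled, but it concerns the application in Section \ref{sec:bell-poly}, not the lemma, and does not affect the correctness of the proof of the statement at hand.
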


The Lagrange inversion theorem for the Laurent series implies that
\[
\omega^{-1}\exp \left( g(y)\right)=\sum_{k> 0}\frac{\omega^{-k}}{k} [W^k]_{x^{-1}}.
\]
Furthermore, we have the following identity from \cite{GRZZ}*{Proposition B.12} and \cite{BGL}*{Lemma 10.5} by the Bell polynomial identities.
\begin{lemma}
Let $f(x)=1+\sum_{k\geq 1} f_k x^k$ be a power series. Then the following equality holds
\[
\exp\left(\sum_{k>0} \frac{1}{k} [f^k]_{x^k} y^k\right)=\sum_{k>0} \frac{1}{k} [f^k]_{x^{k-1}}y^{k-1}.
\]
\end{lemma}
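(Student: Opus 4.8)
The plan is to express both sides of the identity in terms of a single auxiliary formal series and to apply the Lagrange--B\"urmann inversion formula twice. Since $f(0)=1$, the equation $w=y\,f(w)$ has a unique solution $w(y)$ which is a formal power series in $y$ with vanishing constant term, $w(y)=y+O(y^2)$. Applying the Lagrange--B\"urmann formula to $w=y\,\phi(w)$ with $\phi=f$ and $H(t)=t$ gives $w(y)=\sum_{k\geq1}\tfrac1k\,[t^{k-1}]f(t)^k\,y^k$, and therefore
\[
\frac{w(y)}{y}=\sum_{k\geq1}\frac1k\,[f^k]_{x^{k-1}}\,y^{k-1},
\]
which is exactly the right-hand side of the asserted identity. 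On the other hand, the defining equation rewrites as $w(y)/y=f(w(y))$, so the right-hand side also equals $f(w(y))$. Since $f(0)=1$, the series $\log f$ is a well-defined formal power series, and as $f(w(0))=f(0)=1$ the composite $\log f(w(y))$ is a well-defined power series in $y$ with zero constant term. It therefore suffices to prove that $\log f(w(y))=\sum_{k\geq1}\tfrac1k\,[f^k]_{x^k}\,y^k$, the exponent on the left-hand side of the identity.

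To establish this, I would apply Lagrange--B\"urmann once more, now with $H(t):=\log f(t)$, so that for every $k\geq1$
\[
[y^k]\,\log f(w(y))=\frac1k\,[t^{k-1}]\!\left(H'(t)\,f(t)^k\right)=\frac1k\,[t^{k-1}]\!\left(f'(t)\,f(t)^{k-1}\right)=\frac1k\,[t^{k-1}]\!\left(\tfrac1k\,\bigl(f(t)^k\bigr)'\right)=\frac1k\,[f^k]_{x^k},
\]
where the third equality uses $f'(t)f(t)^{k-1}=\tfrac1k\bigl(f(t)^k\bigr)'$ and the last uses that extracting the coefficient of $t^{k-1}$ from $\tfrac1k\bigl(f(t)^k\bigr)'$ returns the coefficient of $t^k$ in $f(t)^k$. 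Hence $\log f(w(y))$ is indeed the desired exponent, and combining with the previous paragraph,
\[
\sum_{k\geq1}\frac1k\,[f^k]_{x^{k-1}}\,y^{k-1}=\frac{w(y)}{y}=f(w(y))=\exp\!\Bigl(\log f(w(y))\Bigr)=\exp\!\Bigl(\sum_{k\geq1}\frac1k\,[f^k]_{x^k}\,y^k\Bigr),
\]
which is the claimed equality; the constant terms match as well, both being $[f^1]_{x^0}=f(0)=1$.

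The only things requiring care are routine formal-series matters: that $w(y)$ exists as a power series in $y$ with zero constant term, that $\log f$ and hence $\log f(w(y))$ are legitimate formal power series (this uses $f(0)=1$, together with the coefficient ring being a $\mathbb{Q}$-algebra so that division by $k$ is allowed), and the exact normalization of the Lagrange--B\"urmann formula, namely $[y^n]H(w(y))=\tfrac1n[t^{n-1}]\bigl(H'(t)\phi(t)^n\bigr)$ for $w=y\,\phi(w)$. None of this is a serious obstacle. If one prefers to use only the Laurent-series inversion lemma already recorded above, the substitution $t\mapsto t^{-1}$ interchanges the two forms and the computation proceeds identically.
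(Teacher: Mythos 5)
Your proof is correct. The paper itself does not prove this lemma; it simply cites \cite{GRZZ}*{Proposition B.12} and \cite{BGL}*{Lemma 10.5}, where the identity is obtained through Bell polynomial manipulations (expanding the exponential via complete Bell polynomials and matching coefficients). Your argument takes a genuinely different and self-contained route: introducing the auxiliary series $w(y)$ solving $w=y\,f(w)$ and applying Lagrange--B\"urmann twice, once with $H(t)=t$ to identify the right-hand side with $w(y)/y=f(w(y))$, and once with $H(t)=\log f(t)$ to identify the exponent on the left-hand side with $\log f(w(y))$; the coefficient computation $\tfrac1k[t^{k-1}]\bigl(f'f^{k-1}\bigr)=\tfrac1k[t^k]f^k$ is correct, and the formal-series caveats you flag (zero constant term of $w$, $f(0)=1$ so that $\log f$ exists, coefficients in a $\mathbb{Q}$-algebra) are exactly the right ones and are harmless here. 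What your approach buys is that it meshes naturally with the surrounding section, which already invokes Lagrange inversion for Laurent series to relate $W$ and the mirror map, so the whole equivalence of quantum periods and the proper potential can be run off a single inversion principle rather than importing an external Bell polynomial identity; what the cited route buys is an explicit combinatorial identity among Bell polynomials that some readers may prefer to quote directly. Either way the lemma stands, and your proof could replace the citation without loss.
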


Therefore, letting $f=x\cdot W$, we have
\begin{align*}
\exp \left( g(y)\right)&=w\sum_{k> 0}\frac{\omega^{-k}}{k} [W^k]_{x^{-1}}\\
&=\sum_{k> 0}\frac{y^{k-1}}{k} [x^kW^k]_{x^{k-1}}\\
&=\exp\left(\sum_{k> 0}\frac{1}{k}[x^k W^k]_{x^k}y^k\right)\\
&=\exp\left(\sum_{k> 0}\frac{1}{k}[W^k]_{x^0}y^k\right).
\end{align*}
Therefore, we have
\[
g(y)=\sum_{k> 0}\frac{1}{k}[W^k]_{x^0}y^k,
\]
where
\[
g(y)=\sum_{d\geq 2}\sum_{\substack{\beta\in \NE(X) \\ D\cdot\beta=d}}\left[\sum_{l\geq 0,\beta^\prime\in \NE(X)}\frac{1}{l!}\langle [\on{pt}]\psi^{d-2},\tau_D,\ldots,\tau_D\rangle_{0,1+l,\beta^\prime}^X y^{\beta^\prime}\right]_{y^\beta}y^\beta (d-1)!.
\]
This implies that
\[
\frac{1}{d}[W^d]_{x^0}=\sum_{\substack{\beta\in \NE(X): D\cdot\beta=d}}\left[\sum_{l\geq 0,\beta^\prime\in \NE(X)}\frac{1}{l!}\langle [\on{pt}]\psi^{d-2},\tau_D,\ldots,\tau_D\rangle_{0,1+l,\beta^\prime}^X y^{\beta^\prime}\right]_{y^\beta}y^\beta (d-1)!.
\]

In Theorem \ref{thm-classical-quantum}, we computed $[W^k]_{\vartheta_0}$. It looks slightly different from $[W^k]_{x^0}$. However, it is not difficult to see that they are the same: by the definition of orbifold theta functions in \cite{You24} and TRR for relative Gromov--Witten theory in \cite{FWY}, we have
\[
[W^k]_{x^0}=\sum_{\beta}\langle [1]_1,\ldots,[1]_1,[1]_{\mathbf b}, [\on{pt}]_{-\mathbf b}\bar{\psi}^{D\cdot\beta-1}\rangle^{(X,D)}_{0,D\cdot\beta+2,\beta},
\]
where the last two markings are mid-age markings. By a degeneration argument, we have the following equality:
\begin{align*}
&\langle [1]_1,\ldots,[1]_1,[1]_{\mathbf b}, [\on{pt}]_{-\mathbf b}\bar{\psi}^{D\cdot\beta-1}\rangle^{(X,D)}_{0,D\cdot\beta+2,\beta}\\
=&\langle [1]_1,\ldots,[1]_1, [\on{pt}]_{0}\bar{\psi}^{D\cdot\beta-2}\rangle^{(X,D)}_{0,D\cdot\beta+1,\beta}.
\end{align*}
Therefore,
\[
[W^k]_{x^0}=[W^k]_{\vartheta_0}.
\]
Hence, we have another proof of Theorem \ref{thm-classical-quantum} from Theorem \ref{thm-proper-potential}:
\begin{align*}
\pi_W(t)&=\sum_{d\geq 0}[W^d]_{\vartheta_0}\\
&=1+\sum_{\substack{\beta\in \NE(X) \\ d=D\cdot\beta\geq 2}}d\left[\sum_{l\geq 0,\beta^\prime\in \NE(X)}\frac{1}{l!}\langle [\on{pt}]\psi^{d-2},\tau_D,\ldots,\tau_D\rangle_{0,1+l,\beta^\prime}^X y^{\beta^\prime}\right]_{y^\beta}y^\beta (d-1)!\\
&=1+\sum_{\substack{\beta\in \NE(X) \\ d=D\cdot\beta\geq 2}}\left[\sum_{l\geq 0,\beta^\prime\in \NE(X)}\frac{1}{l!}\langle [\on{pt}]\psi^{d-2},\tau_D,\ldots,\tau_D\rangle_{0,1+l,\beta^\prime}^X y^{\beta^\prime}\right]_{y^\beta}y^\beta (d)!.
\end{align*}

We can run the above argument backward to prove Theorem \ref{thm-proper-potential} from Theorem \ref{thm-classical-quantum}. 
Therefore, quantum periods together with the mirror maps give exactly the same information as proper potentials.

\bibliographystyle{amsxport}
\bibliography{main}

\end{document}